\theoremstyle{plain}
\newtheorem{thrm}{Theorem}[subsection]
\newtheorem{prop}[thrm]{Proposition}
\newtheorem{lem}[thrm]{Lemma}
\theoremstyle{definition}
\newtheorem{defn}[thrm]{Definition}
\theoremstyle{remark}
\newtheorem{remark}[thrm]{Remark}
\newtheorem{example}[thrm]{Example}
\newcommand\M{G}
\newcommand\N{\mathbb{N}}
\newcommand\Z{\mathbb{Z}}
\newcommand\R{\mathbb{R}}
\newcommand\C{\mathbb{C}}
\newcommand\Q{\mathbb{Q}}
\newcommand\PK{\pi_K}
\newcommand\PJ{\pi_J}
\newcommand\PC{\pi_C}
\newcommand\EK{{E_K}}
\newcommand\EJ{{E_J}}
\newcommand\EC{{E_C}}
\newcommand\mK{\mu_K}
\newcommand\mJ{\mu_J}
\newcommand\laK{\lambda_K}
\newcommand\laJ{\lambda_J}
\newcommand\g{\gamma}
\newcommand\gK{\gamma_K}
\newcommand\gJ{\gamma_J}
\newcommand\G{\Gamma}
\newcommand\DKn{{\Gamma^{(n)} \gamma_K}}
\newcommand\DJn{{\Gamma^{(n)} \gamma_J}}
\newcommand\DPn{{\Gamma^{(n)} \gamma_P}}
\newcommand\DEn{{\Gamma^{(n)} \gamma_E}}
\newcommand\DKnn{{\Gamma^{(n+1)} \gamma_K}}
\newcommand\DJnn{{\Gamma^{(n+1)} \gamma_J}}
\newcommand\DPnn{{\Gamma^{(n+1)} \gamma_P}}
\newcommand\DEnn{{\Gamma^{(n+1)} \gamma_E}}
\newcommand\DPi{{\Gamma^{(i)} \gamma_P}}
\newcommand\DQi{{\Gamma^{(i)} \gamma_Q}}
\newcommand\DEi{{\Gamma^{(i)} \gamma_E}}
\title{Twisted homology cobordism invariants of knots in aspherical manifolds}
\author{Prudence Heck}
\date{\today}                                           
\begin{document}
\maketitle 

\begin{abstract}

We fix a null-homologous, homotopically essential knot $J$ in a 3-manifold with PTFA fundamental group and study concordance of knots that are homotopic to $J$.  We construct an infinite family of knots that are characteristic to $J$, and then use $L^2$-methods to show that they are not concordant to $J$.

\end{abstract}

\section{Introduction}

Let $M$ be a closed, irreducible, oriented, aspherical 3-manifold with poly-torsion-free-abelian (PTFA) fundamental group and let $J$ be a null-homologous, homotopically essential knot (that is, $J$ together with a tail to the base point of $M$ represents a nontrivial class of $\pi_1(M)$).  
A knot $K$ in $M$ is concordant to $J$ if together $J$ and $K$ bound an annulus in $M\times I$ that intersects the boundary transversely.  The aim of this paper is to fix a knot $J$ and construct obstructions to a knot $K$ being concordant to $J$ using $L^2$-methods first defined by T. Cochran, K. Orr, and P. Teichner in \cite{cot}, and then used by J. C. Cha, T. Cochran, S. Friedl, S. Harvey, T. Kim, C. Leidy, K. Orr, and P. Teichner, to study concordance of knots and links in $S^3$, and then to construct infinitely-many knots distinguished by these invariants.  For more on manifolds with solvable fundamental group see \cite{evans-moser}.

Our concordance obstructions are von-Neumann $\rho$-invariants of certain 3-manifolds $M(K)$ with coefficients in groups reminiscent of the solvable quotients of $\pi_1\left( M(K) \right)$ by its derived series.  The von-Neumann $\rho$-invariant, defined by J. Cheeger and M. Gromov in \cite{cheegerg-gromov}, is an oriented homeomorphism invariant that associates a real number to any regular cover of a closed, oriented 3-manifold $N$.  An important result of M. Ramachandran \cite{ram} is that if $N$ bounds an oriented 4-manifold $W$, and if the coefficient system $\g: \pi_1(N) \to A$ on $N$ extends over $\pi_1(W)$, then this $\rho$-invariant is equal to $\sigma^{(2)}(W, \g) - \sigma(W)$, where $\sigma^{(2)}(W, \g)$ is the $L^2$-signature of the intersection form on $H_2\left( W; \mathcal{U}(A)\right)$ and $\sigma(W)$ is the ordinary signature.  Given a knot $K$, the 3-manifold we use, $M(K)$, is a variation on zero-surgery on $\EK$, the exterior of $K$ in $M$.  The coefficient systems we use arise from a choice of localization similar to the algebraic closure of groups defined by J. Levine in \cite{alg.closureII}.  We note that Levine defined the algebraic closure in an attempt to distinguish {\em links} in $S^3$.  
In fact, the algebraic closure of the fundamental group of the exterior of a {\em knot} in $S^3$ is $\Z$, and therefore uninteresting.  The idea that classical invariants for links in $S^3$ could be used to distinguish knots in non-simply connected manifolds is due to D. Miller.  In \cite{miller2} he extended Milnor's $\overline{\mu}$-invariants for links in $S^3$ to knots in Seifert fibered manifolds that are homotopic to the Seifert fiber, and used them to study such knots up to concordance.

We associate to $K$ the closed 3-manifold $M(K) = \EK \cup \EJ$, called {\em $J$-surgery on $K$}, where $\partial \EK$ is identified with $\partial \EJ$ by $\mK \sim \mJ^{-1}$ and $\laK \sim \laJ$.  We use this 3-manifold instead of 0-surgery on $\EK$ for two reasons.  First, if $K$ is concordant to $J$ via an annulus $C \subset M \times I$ and if $\EC$ is the exterior of $C$ then $\partial \EC \cong M(K)$.  Second, if $G = \pi_1(M)$ then the inclusion of $\EK$ into $M$ induces an epimorphism $\gK: \pi_1(\EK) \to G$, and our construction requires that there be an epimorphism $\g_P: \pi_1 \left( M(K) \right) \to G$ as well.  If we used 0-surgery in place of $M(K)$ then no such epimorphism exists.  However, $\pi_1 \left( M(K) \right)$ is a pushout as in the following diagram, 
$$
\xymatrix{
\pi_1(T^2) \ar[r] \ar[d] & \pi_1(\EJ) \ar[d] \ar@/^/[ddr]^{\gJ} \\
\pi_1(\EK) \ar[r] \ar@/_/[drr]_{\gK} & \pi_1 \left( M(K) \right) \ar@{..>}[dr]|{\g_P} \\
& & G
}
$$
so the epimorphism $\g_P: \pi_1 \left( M(K) \right) \to G$ is uniquely defined (in fact, this epimorphism factors through $G \ast_{\Z} G$, so $M(K)$ can be regarded as a space over $G$ or $G \ast_{\Z} G$). 

Let $\mathcal{G}^G$ be the category whose objects are group epimorphisms onto $G$, $\g_A: A \to G$, and whose morphisms $f: \g_A \to \g_B$ are group homomorphisms over $G$.  Following Levine's construction of algebraic closure \cite{alg.closureII}, we construct a localization on $\mathcal{G}^G$ and use this to define for each object $\g_A$ of $\mathcal{G}^G$ the {\em rational $G$-local derived series}, a normal series
$$\xymatrix{ A \ar@{}[r]|{\unrhd} & \G \g_A  \ar@{}[r]|{\unrhd} & \G^{(1)} \g_A  \ar@{}[r]|{\unrhd} & \cdots  \ar@{}[r]|{\unrhd} & \G^{(n)} \g_A  \ar@{}[r]|{\unrhd} & \cdots }$$
reminiscent of the derived series.  This series is defined so that $\G \g_A = \text{Ker}(\g_A)$ and, in particular, so that $\g_A$ induces an epimorphism $\dfrac{A}{\G^{(n)} \g_A} \to G$ for all integers $n$. 

\begin{thrm}
Denote by $\Omega^G$ the class of morphisms in $\mathcal{G}^G$ satisfying the following properties:
\begin{enumerate}
\item $f: \g_A \rightarrow \g_B$ is a morphism with $A$ finitely generated and $B$ finitely presented,
\item $\G \g_{A}$ and $\G \g_{B}$ are finitely normally generated in $A$ and $B$, respectively,
\item $f$ induces a normal surjection $\G \g_{A} \rightarrow \G \g_{B}$, and
\item $f_{i}: H_{i}(A; \Z[G]) \rightarrow H_{i}(B; \Z[G])$ is an isomorphism for $i=1$ and an epimorphism for $i = 2$.
\end{enumerate}
There is a localization $(E, p)$ on $\mathcal{G}^G$ under which elements of $\Omega^G$ become isomorphisms.
\end{thrm}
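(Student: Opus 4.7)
The plan is to adapt Levine's construction of algebraic closure from \cite{alg.closureII} to the relative setting of groups over $G$, obtaining $(E, p)$ via a transfinite small object argument applied to the class $\Omega^G$. Given $\g_A : A \to G$, the idea is to build $E(\g_A)$ as the colimit of a transfinite tower in $\mathcal{G}^G$ in which every morphism $f \in \Omega^G$ and every arrow from its source into the current stage has been ``filled in'' by a pushout; the universal property of pushouts will then force every $f \in \Omega^G$ to become invertible after mapping into $E(\g_A)$.

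First I would verify the closure properties of $\Omega^G$ needed for the construction. The crucial technical point is that pushouts in $\mathcal{G}^G$ preserve conditions (1)--(4): condition (1) because the pushout of a finitely presented group along a finitely generated one is still finitely presented; conditions (2) and (3) by chasing normal closures of $\G \g_A$ and $\G \g_B$ through the pushout square; and condition (4) by a Mayer-Vietoris sequence with $\Z[G]$-coefficients applied to the pushout of classifying spaces (the group-theoretic analogue of Stallings' excision, with the local system coming from $\g_A$). I would also observe that every $f \in \Omega^G$ has countable source, so that up to isomorphism $\Omega^G$ is essentially a set and the small object argument applies.

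With these closure properties in hand, set $\g_{A_0} = \g_A$. At a successor ordinal $\alpha + 1$, form the pushout in $\mathcal{G}^G$
$$\xymatrix{ \coprod_{(f, u)} \g_X \ar[r] \ar[d] & \g_{A_\alpha} \ar[d] \\ \coprod_{(f, u)} \g_Y \ar[r] & \g_{A_{\alpha+1}} }$$
where $(f, u)$ ranges over pairs consisting of $f : \g_X \to \g_Y$ in $\Omega^G$ and $u : \g_X \to \g_{A_\alpha}$, and $\coprod$ denotes coproduct in $\mathcal{G}^G$. At limit ordinals, take the colimit in $\mathcal{G}^G$. Because each source $\g_X$ is finitely generated, any map out of $\g_X$ factors through some bounded stage, so the tower stabilizes at an ordinal $\lambda$ of sufficiently large cofinality, giving the $\Omega^G$-local object $E(\g_A) := \g_{A_\lambda}$ and the canonical map $p_{\g_A} : \g_A \to E(\g_A)$. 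Functoriality of $E$, and the universal property that characterizes a localization, follow from the universal property of pushouts and filtered colimits in the usual Bousfield-style argument.

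The main obstacle, I expect, will be the verification that the homological condition (4) is preserved through every successor and limit stage. The successor case requires a careful Mayer-Vietoris computation with twisted $\Z[G]$-coefficients, using that the data of the epimorphism to $G$ supplies the necessary local system; the limit case relies on the fact that $H_*(-; \Z[G])$ commutes with filtered colimits of groups. A subtler point is that $A_\lambda$ is typically not finitely generated, so $E(\g_A)$ sits \emph{outside} the finiteness-bounded subclass within which $\Omega^G$ was defined. This is harmless provided locality is only tested against morphisms in $\Omega^G$, whose sources are finitely generated and hence factor through a finite stage $A_\alpha$ where the pushout has already been performed; confirming this factorization argument precisely, together with the preservation of the epimorphism condition $A_\alpha \twoheadrightarrow G$ at each stage, will be the principal technical work.
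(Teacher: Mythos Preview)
Your approach differs substantially from the paper's and has a genuine gap. The paper follows Levine and builds $E(\gamma_A)$ as an \emph{algebraic closure}: first form $\tilde A$ by freely adjoining a solution to every finite system $\{x_i = w_i\}$ with $w_i \in \text{Ker}\{A \ast F(x_1,\dots,x_n) \to G \ast F(x_1,\dots,x_n)\}$, and then pass to the quotient $\hat A = \tilde A / I$, where $I$ is generated by all \emph{$\Pi$-perfect} subgroups (finitely normally generated $N \trianglelefteq \tilde A$ with $N = [N, \Gamma\gamma_{\tilde A}]$). These two steps force, respectively, \emph{existence} and \emph{uniqueness} of solutions; the paper then proves that ``algebraically closed'' coincides with ``$\Omega^G$-local'' and that every $f \in \Omega^G$ induces an isomorphism $\hat f : \hat A \to \hat B$.

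Your transfinite pushout handles only the existence half. What the small object argument produces is an $\Omega^G$-\emph{injective} object (every $u : \gamma_X \to A_\lambda$ with $f : \gamma_X \to \gamma_Y$ in $\Omega^G$ admits \emph{some} extension over $\gamma_Y$), not an $\Omega^G$-\emph{local} one (the extension is unique). These notions coincide only when the maps being inverted are categorical epimorphisms, and maps in $\Omega^G$ need not be: condition~(3) says $f(\Gamma\gamma_A)$ \emph{normally} generates $\Gamma\gamma_B$, not that $f$ is onto (e.g.\ with $G$ trivial, $\Z \to \pi_1(S^3\setminus \text{trefoil})$ sending $1$ to a meridian lies in $\Omega^{\{e\}}$ but is not surjective). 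To repair the argument you must also attach the codiagonals $\gamma_Y \ast_{\gamma_X} \gamma_Y \to \gamma_Y$, or equivalently insert a quotient step; the paper's quotient by $\Pi$-perfect subgroups is exactly this step, and its key lemma (a system has two distinct solutions iff there is a nontrivial $\Pi$-perfect subgroup) is what makes uniqueness go through.

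A secondary point: the ``principal technical work'' you single out---propagating condition~(4) along the tower---is aimed at the wrong target. The transition maps $A_\alpha \to A_{\alpha+1}$ need not, and after the first step cannot, lie in $\Omega^G$ (the $A_\alpha$ are not finitely generated), and nothing in the argument requires them to. What must be shown is that the colimit is \emph{local} and that $p_{\gamma_A}$ is inverted by every map to a local object; neither follows from the Mayer--Vietoris bookkeeping you outline.
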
 

This leads to the following injectivity result, a necessary result for our construction,

\begin{lem}
If $f: \g_A \rightarrow \g_B$ is in $\Omega^G$ then $f$ induces a monomorphism $\overline{f}: \dfrac{A}{\G^{(n)} \g_A} \to \dfrac{B}{\G^{(n)} \g_B}$ for all $n$.
\end{lem}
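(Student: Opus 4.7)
I would argue by induction on $n$, adopting the convention that $\G^{(0)}\g_A := \G \g_A$. The base case is immediate: because $\g_A$ is an epimorphism with kernel $\G \g_A$, the quotient $A/\G \g_A$ is canonically identified with $G$, and similarly for $B$; since $f$ is a morphism in $\mathcal{G}^G$ (i.e.\ over $G$), the induced map $\overline{f}_0$ is the identity on $G$, hence a monomorphism.

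For the inductive step, assume $\overline{f}_n: A/\G^{(n)}\g_A \hookrightarrow B/\G^{(n)}\g_B$ and consider the comparison of short exact sequences
\[
\xymatrix{
1 \ar[r] & \G^{(n)}\g_A / \G^{(n+1)}\g_A \ar[r] \ar[d]^{\varphi_n} & A/\G^{(n+1)}\g_A \ar[r] \ar[d]^{\overline{f}_{n+1}} & A/\G^{(n)}\g_A \ar[r] \ar[d]^{\overline{f}_n} & 1 \\
1 \ar[r] & \G^{(n)}\g_B / \G^{(n+1)}\g_B \ar[r] & B/\G^{(n+1)}\g_B \ar[r] & B/\G^{(n)}\g_B \ar[r] & 1.
}
\]
A diagram chase (the injective form of the five-lemma) reduces the claim to showing that the left vertical arrow $\varphi_n$ is injective. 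Since the $(n{+}1)$-st term of the rational $G$-local derived series is built from the $n$-th term by taking its abelianization (as a module over $A/\G^{(n)}\g_A$, respectively $B/\G^{(n)}\g_B$) and passing to the localization $E$ supplied by the previous theorem, the quotient $\G^{(n)}\g_A/\G^{(n+1)}\g_A$ sits inside an $E$-local object on which $p(f)$ acts as an isomorphism (by the theorem, since $f \in \Omega^G$). The inductive hypothesis that $\overline{f}_n$ is injective is exactly what is needed to view $\varphi_n$ as a restriction of (a functorial image of) $p(f)$, from which injectivity follows.

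The principal obstacle is in justifying the last step: one must show that the defining properties of $\Omega^G$ propagate from level $n$ to level $n{+}1$. Concretely, the homological condition (4) is stated with coefficients in $\Z[G]$, but to apply the same localization argument at the next level one needs the analogous condition with coefficients in $\Z[A/\G^{(n)}\g_A]$. I would handle this via a Hochschild--Serre spectral sequence comparison for the extensions $1 \to \G^{(n)}\g_A/\G^{(n+1)}\g_A \to A/\G^{(n+1)}\g_A \to A/\G^{(n)}\g_A \to 1$ and its counterpart for $B$, using the finite generation and finite normal generation hypotheses (1) and (2) to control low-dimensional homology, and using the inductive injectivity of $\overline{f}_n$ (which gives a well-behaved change-of-rings $\Z[A/\G^{(n)}\g_A] \hookrightarrow \Z[B/\G^{(n)}\g_B]$) to transport the $H_1$-isomorphism/$H_2$-epimorphism hypothesis down through the series.
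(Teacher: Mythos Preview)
Your proposal misreads the definition of the rational $G$-\emph{local} derived series. In the paper, $\Gamma^{(n)}\gamma_A$ is \emph{not} built inductively from $\Gamma^{(n-1)}\gamma_A$ by abelianizing and then localizing; rather, one applies the localization functor $E$ \emph{once} to obtain $E(A)$, takes the ordinary rational $G$-derived series $\Gamma_r^{(n)}\gamma_{E(A)}$ of $E(A)$, and then \emph{defines}
\[
\Gamma^{(n)}\gamma_A \;:=\; \ker\!\left(A \longrightarrow \dfrac{E(A)}{\Gamma_r^{(n)}\gamma_{E(A)}}\right).
\]
With this definition in hand, the lemma is essentially immediate and no induction is needed: by construction the vertical arrows in
\[
\xymatrix{
\dfrac{A}{\Gamma^{(n)}\gamma_A} \ar[r]^{\overline f} \ar@{^{(}->}[d] & \dfrac{B}{\Gamma^{(n)}\gamma_B} \ar@{^{(}->}[d] \\
\dfrac{E(A)}{\Gamma_r^{(n)}\gamma_{E(A)}} \ar[r]^{\overline{E(f)}} & \dfrac{E(B)}{\Gamma_r^{(n)}\gamma_{E(B)}}
}
\]
are injective, and since $f\in\Omega^G$ the localization theorem gives that $E(f)$ is an isomorphism, hence so is the bottom map; injectivity of $\overline f$ follows from commutativity.

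Because of this misreading, your inductive scheme is aimed at the wrong target. The ``principal obstacle'' you identify --- propagating the $\Omega^G$ hypotheses (in particular the $H_1$/$H_2$ condition with $\Z[G]$-coefficients) to coefficients in $\Z[A/\Gamma^{(n)}\gamma_A]$ via a Hochschild--Serre comparison --- is not something the paper ever needs, and your sketch of how to do it is not convincing as stated: the injection $\Z[A/\Gamma^{(n)}\gamma_A]\hookrightarrow\Z[B/\Gamma^{(n)}\gamma_B]$ is not in general flat, so a change-of-rings argument to transport homological conditions would require additional justification that you have not supplied. The entire difficulty evaporates once the series is read as a pullback from $E(A)$, since the localization theorem already packages the needed injectivity.
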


The significance of the class $\Omega^G$ is that if $K$ is concordant to $J$ via an annulus $C$ then the inclusion $\EK \to \EC$ induces a homomorphism on fundamental groups that is a morphism in $\Omega^G$.  This observation together with the above lemma gives 

\begin{thrm}
Suppose that $K$ and $L$ are concordant knots and that $G$ is PTFA.  If $P = \pi_1 \left( M(K) \right)$ and $Q = \pi_1 \left( M(L) \right)$, and if $\g_A^n: A \to \dfrac{A}{\G^{(n)} \g_A}$ for $A \in \{P, Q\}$, 
then $\rho \left( M(K), \g_P^n \right) = \rho \left( M(L), \g_Q^n \right)$ for all $n$, regarded as spaces over $G$ or $G \ast_{\Z} G$.
\end{thrm}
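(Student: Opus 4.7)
The plan is to construct a 4-manifold $V$ with $\partial V = M(K) \sqcup -M(L)$, extend the coefficient systems across $V$, and apply Ramachandran's theorem to express the difference of $\rho$-invariants as a signature defect which is then shown to vanish. Let $C \subset M \times I$ be the concordance annulus with $\partial C = K \sqcup L$, and let $\EC$ be its exterior; since $C$ is an annulus, the part of $\partial \EC$ contributed by the tubular neighborhood of $C$ is $C \times S^1 \cong T^2 \times I$. Define $V = \EC \cup_{T^2 \times I} (\EJ \times I)$, gluing via the same identifications $\mK \sim \mJ^{-1}$, $\laK \sim \laJ$ that define $J$-surgery at each end. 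A direct accounting of boundary pieces gives $\partial V = M(K) \sqcup -M(L)$. As in the pushout argument used earlier for $\pi_1(M(K))$, the epimorphisms from $\pi_1(\EC)$ and $\pi_1(\EJ)$ to $G$ assemble into an epimorphism $\g_V: \pi_1(V) \to G$ (and likewise through $G \ast_{\Z} G$), making $V$ an object of $\mathcal{G}^G$.

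Next, I would verify that the inclusions $P \hookrightarrow \pi_1(V)$ and $Q \hookrightarrow \pi_1(V)$ lie in the class $\Omega^G$. Conditions (1) and (2) are immediate from the compactness and low dimension of the manifolds involved, and condition (3) follows from the pushout description of $\pi_1(V)$. The essential content is condition (4): that each inclusion induces an isomorphism on $H_1(-;\Z[G])$ and an epimorphism on $H_2(-;\Z[G])$. I would derive this from a Mayer--Vietoris argument for the decomposition $V = \EC \cup (\EJ \times I)$, combined with the observation that the inclusion $\EK \hookrightarrow \EC$ is a $\Z[G]$-homology equivalence in low degrees (the twisted analog of the classical fact that concordance exteriors are homology cobordisms over $\Z$). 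Once (4) is verified, the preceding Lemma provides monomorphisms
$$\frac{P}{\Gamma^{(n)} \g_P} \hookrightarrow \frac{\pi_1(V)}{\Gamma^{(n)} \g_V} \hookleftarrow \frac{Q}{\Gamma^{(n)} \g_Q},$$
so the quotient $\tilde{\g}_V^n: \pi_1(V) \to \pi_1(V)/\Gamma^{(n)} \g_V$ serves as a single coefficient system extending both $\g_P^n$ and $\g_Q^n$ across $V$.

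With this extended coefficient system in place, Ramachandran's theorem, together with the invariance of $\rho$ under injective changes of the coefficient group, yields
$$\rho(M(K), \g_P^n) - \rho(M(L), \g_Q^n) = \sigma^{(2)}(V, \tilde{\g}_V^n) - \sigma(V).$$
The ordinary signature $\sigma(V)$ vanishes because $V$ is obtained from $M \times I$ (whose intersection form is zero) by a codimension-two surgery, so any two 2-cycles can be separated by pushing along the $I$-direction away from $C$. The group $\pi_1(V)/\Gamma^{(n)} \g_V$ is PTFA by induction on $n$, since $G$ is PTFA and each successive quotient in the rational $G$-local derived series is constructed to preserve PTFA-ness. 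The main obstacle I anticipate is the vanishing of $\sigma^{(2)}(V, \tilde{\g}_V^n)$: this requires showing that the twisted intersection form on $H_2(V; \mathcal{U}(\pi_1(V)/\Gamma^{(n)} \g_V))$ has zero $L^2$-signature, which I expect to follow from a COT/CHL-style argument exhibiting a Lagrangian submodule coming from the 2-dimensional surgery datum $C$ (together with the fact that $\EJ \times I$ contributes only degenerate classes to the pairing).
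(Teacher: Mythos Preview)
Your construction of $V$ and the overall architecture---verifying that the boundary inclusions lie in $\Omega^{G'}$, invoking the injectivity lemma to get a common coefficient system, and then applying Ramachandran---match the paper's proof essentially line for line (the paper calls your $V$ by the name $N$). Two points deserve comment.

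First, your argument for $\sigma(V)=0$ (``pushing 2-cycles along the $I$-direction'') is heuristic. The paper obtains this cleanly as a by-product of the same Mayer--Vietoris computation used for condition~(4): running the comparison of $M(K)\hookrightarrow N$ with $\Z$-coefficients shows $H_*(N,M(K);\Z)=0$, so $N$ is an integral homology cobordism and in particular $\sigma(N)=0$.

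Second, and more substantively, your plan for $\sigma^{(2)}(V,\tilde\g_V^n)=0$ via a ``COT/CHL-style Lagrangian'' is more elaborate than needed and is left vague. The paper's route is simpler and worth knowing: once you have $H_*(N,M(K);\Z)=0$ and the target group $\pi_1(N)/\Gamma^{(n)}\g_N$ is PTFA, Proposition~2.10 of \cite{cot} upgrades this to $H_*(N,M(K);\mathcal{K})=0$ for the Ore localization $\mathcal{K}$, and flatness of $\mathcal{U}$ over $\mathcal{K}$ then gives $H_*(N,M(K);\mathcal{U})=0$. Thus the entire twisted intersection form on $H_2(N;\mathcal{U})$ is already zero---no Lagrangian needs to be found. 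Your anticipated difficulty dissolves once you observe that the integral homology-cobordism condition, together with PTFA coefficients, forces the twisted relative homology to vanish outright.
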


A knot $K$ in $M$ is {\em $J$-characteristic} if there is a continuous map $\alpha: M \to M$ such that $\alpha(K) = J$ and $\alpha(M-K) \subseteq M-J$.  Miller's aforementioned invariants \cite{miller2}, which extend Milnor's $\overline{\mu}$-invariants, obstruct a knot being $J$-characteristic in the case that $J$ is the Seifert fiber of a (suitable) Seifert fibered 3-manifold.  Closely following the construction of Harvey in \cite{harvey2}, we construct infinitely many knots that are $J$-characteristic but not concordant to $J$.  Indeed, the $L^2$-signatures of the knots constructed here form a dense subset of $\R$.  We reiterate these remarks in the following theorem.

\begin{thrm}
Suppose that $\eta \in \DJn - \DJnn$ bounds an embedded disk in $M$.  There are knots $K_{\epsilon}$ such that if $P = \pi_1 \left( M(J) \right)$ and $Q_{\epsilon} = \pi_1 \left( M(K_{\epsilon}) \right)$ then
\begin{enumerate}
\item[(i)] Each $K_{\epsilon}$ is $J$-characteristic,
\item[(ii)] For each $K_{\epsilon}$, $\rho(M(K_{\epsilon}), \g_{Q_{\epsilon}}^i) = \rho(M(J), \g_P^i)$ for $i \leq n$, and 
\item[(iii)] $\{ \rho(M(K_{\epsilon}), \g_{Q_{\epsilon}}^{n+1}) \}$ is a dense subset of $\R$.
\end{enumerate}
\end{thrm}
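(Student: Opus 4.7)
The plan is to follow the infection (``genetic modification'') construction of Cochran--Orr--Teichner \cite{cot} and Harvey \cite{harvey2}, adapted to knots in $M$. First, choose a family $\{R_\epsilon\} \subset S^3$ of knots whose integrated Levine--Tristram signatures form a dense subset of $\R$; the standard choice is a family of genus-one algebraically slice knots obtained by varying the number of full twists along a second Seifert-surface generator. Perform genetic infection of $J$ along $\eta$: remove an open tubular neighborhood of $\eta$ in $M$ and glue in the exterior of $R_\epsilon$ so that the meridian of $R_\epsilon$ is identified with the meridian of $\eta$ and the zero-framed longitude of $R_\epsilon$ is identified with $\eta$. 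Denote the resulting knot by $K_\epsilon$.

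For (i), use the embedded disk $D \subset M$ with $\partial D = \eta$ to build a continuous map $\alpha : M \to M$ which is the identity outside the infection region and which collapses the inserted copy of the exterior of $R_\epsilon$ back to the original solid-torus neighborhood of $\eta$ by the standard degree-one normal map. Such an $\alpha$ exists precisely because $D$ exhibits $\eta$ as an unknot bounding a disk. By construction $\alpha(K_\epsilon) = J$ and $\alpha$ sends $M \setminus K_\epsilon$ into $M \setminus J$.

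For (ii) and (iii), form the standard infection cobordism $W$ from $M(J)$ to $M(K_\epsilon)$ by attaching the exterior of $R_\epsilon \times I$ along the infecting torus. Applying Ramachandran's theorem together with the $L^2$-signature additivity along tori of \cite{cot} gives
\begin{equation*}
\rho\bigl(M(K_\epsilon),\gamma_{Q_\epsilon}^i\bigr) - \rho\bigl(M(J),\gamma_P^i\bigr) \;=\; \rho^{(2)}\bigl(R_\epsilon,\psi_i\bigr),
\end{equation*}
where $\psi_i$ is the restriction of $\gamma_{Q_\epsilon}^i$ to the $\pi_1$ of the inserted exterior. The image of $\psi_i$ is generated by the image of the meridian of $R_\epsilon$, which is identified with the class of $\mu_\eta$ in $Q_\epsilon/\G^{(i)}\gamma_{Q_\epsilon}$. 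Since $\eta \in \DJn$ and the inclusion of the ``$J$-portion'' $\EJ \hookrightarrow M(K_\epsilon)$ lies in $\Omega^G$ (using that $\eta$ bounds $D$ to verify the $H_2$-condition), the injectivity Lemma forces $\mu_\eta \in \G^{(i)}\gamma_{Q_\epsilon}$ for $i \leq n$, so $\psi_i$ is trivial and (ii) follows. For $i = n+1$, since $\eta \notin \DJnn$ the same injectivity argument shows $\psi_{n+1}$ has infinite-cyclic image, so $\rho^{(2)}(R_\epsilon,\psi_{n+1})$ reduces to the integrated Levine--Tristram signature of $R_\epsilon$; the choice of family then makes the set of these values dense in $\R$, giving (iii).

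The principal technical obstacle is the derived-series bookkeeping: one must control the position of $\mu_\eta$ in the rational $G$-local derived series of $Q_\epsilon$ at every level. The ``upstairs'' direction ($\mu_\eta \in \G^{(i)}\gamma_{Q_\epsilon}$ for $i \le n$) is exactly where the injectivity Lemma and the disk-bounding hypothesis are used; one must verify that an appropriately chosen morphism from $\gJ$ to $\gamma_{Q_\epsilon}$ belongs to $\Omega^G$, checking the $H_1$- and $H_2$-conditions with $\Z[G]$-coefficients (the latter being what ultimately forces the disk $D$ into the argument). The ``downstairs'' non-triviality at level $n+1$ is of Cochran--Harvey flavor: one must ensure that the meridian of $R_\epsilon$ survives nontrivially under $\psi_{n+1}$, which follows from $\eta \notin \DJnn$ together with the requirement that $R_\epsilon$ have nontrivial Alexander module.
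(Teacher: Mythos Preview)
Your overall strategy---infect $J$ along $\eta$ by knots with prescribed integrated Levine--Tristram signatures, build a Harvey-style cobordism, and read off the $\rho$-difference as a $\rho$-invariant of zero-surgery on the infecting knot---is exactly the paper's. But the execution has real errors.

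First, the infection gluing is stated incorrectly: one must set $\mu_{R_\epsilon}\sim\lambda_\eta^{-1}$ and $\lambda_{R_\epsilon}\sim\mu_\eta$, not meridian-to-meridian; with your gluing the result is not diffeomorphic to $M$. Consequently the element to track in the derived series is $\lambda_\eta$ (that is, $\eta$ itself), not $\mu_\eta$. Second, and more seriously, your justification of the key step---that $\psi_i$ has trivial image for $i\le n$ and image $\Z$ for $i=n+1$---via ``the inclusion $E_J\hookrightarrow M(K_\epsilon)$ lies in $\Omega^G$'' does not work. There is no inclusion of $E_J$ into $M(K_\epsilon)$ whose image contains the curve $\eta$: the copy $-E_J$ coming from $J$-surgery is on the wrong side, while on the infected side only $E_J-N(\eta)$ embeds; and the candidate maps one can write down fail the normal-surjection clause of $\Omega^G$ over $G$. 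The paper instead passes through the cobordism $C=(M(J)\times I)\cup -W$, where $W$ is a $4$-manifold with $\partial W=Z_{R_\epsilon}$, $\pi_1(W)=\Z$, and $\sigma(W)=0$. It proves that the boundary inclusions induce $i_*\colon P\to E:=\pi_1(C)$ (an isomorphism) and $j_*\colon Q_\epsilon\to E$, both in $\Omega^{G'}$, and then applies functoriality together with the injectivity lemma to $i_*$ (not to any map out of $\pi_J$) to locate $\tau(\mu_{R_\epsilon})=\lambda_\eta^{-1}$ in $\Gamma^{(n)}\gamma_E\setminus\Gamma^{(n+1)}\gamma_E$. The disk $D$ plays no role in any $H_2$-verification; the $H_2$-condition for $j_*$ comes from $H_2(M(J);\Z[G'])=0$ via a Mayer--Vietoris argument. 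The disk is used only to guarantee that infection returns $M$ and to build the $J$-characteristic map in (i). Finally, no condition on the Alexander module of $R_\epsilon$ is needed: once the coefficient system on $Z_{R_\epsilon}$ has image $\Z$, the contribution is $\int_{S^1}\sigma_\omega(R_\epsilon)\,d\omega$ for any $R_\epsilon$.
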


The paper is organized as follows.  In Section \ref{Preliminaries} we review some basic definitions from knot theory.  We also show that the exteriors of the knots under consideration are Eilenberg-MacLane spaces.  In Section \ref{Characteristic series and an injectivity theorem} we introduce the category $\mathcal{G}^G$, construct the {\em rational $G$-local derived series}, and prove the above injectivity lemma.  Section \ref{Review of L^2 methods} is a review of $L^2$-signatures necessary for our main results.  In the first half of Section \ref{J-surgery and analytic invariants} we define {\em $J$-surgery on a knot $K$} and show that certain $\rho$-invariants of $J$-surgery are concordance invariants.  In the second half of Section \ref{J-surgery and analytic invariants} we construct infinitely many non-concordant $J$-characteristic knots and distinguish them up to concordance via $\rho$-invariants of their $J$-surgeries.  Finally, Section \ref{const of localization} is devoted to constructing the algebraic closure necessary for the results of Section \ref{Characteristic series and an injectivity theorem}. \\

The results presented here constitute part of the author's Ph.D. thesis.  She is especially thankful to her advisor, Kent Orr, for all of his support and guidance, and for many useful discussions on this work.

\section{Preliminaries}
\label{Preliminaries}

Henceforth we work in the smooth category and only consider null-homologous knots in closed, oriented, irreducible, aspherical manifolds that, when based, represent a homotopy class of infinite order.  However, many of the definitions in this chapter extend to null-homologous knots in orientable 3-manifolds.  Recommended references are \cite{gompf-stip}, \cite{munkres}, and \cite{rolfsen}. \\

A {\em knot} $K$ in a 3-manifold $M$ is an oriented one-dimensional closed submanifold.  We will assume that $M$ has base point $p \in M$ and that $K$ does not contain $p$.  We also assume that $K$ is based via an embedded path to $p$, although the choice of path will not matter for the results of this paper.  The {\em exterior} of $K$ is the complement of an open normal neighborhood of $K$, $\EK = M-N(K)$, based at $p$.  Note that the inclusion $i_K: \EK \hookrightarrow M$ induces an epimorphism $\gK: \pi_1 (\EK, p) \rightarrow \pi_1 (M, p)$.  We denote the fundamental group of $\EK$ by $\PK$ and the fundamental group of $M$ by $G$. \\

A {\em meridian} $\mK$ of $K$ is an embedded curve in $\partial \EK$ that represents a primitive element of $H_1(\partial \EK)$ and bounds a disk in $N(K)$.  It is uniquely determined up to isotopy in $\partial \EK$.  We will abuse notation by letting $\mK$ denote the curve in $M$, its isotopy class in $\partial \EK$, its homology class in $H_1(\EK)$, and its homotopy class in $\PK$, where in the last case we regard $\mK$ as being based via the path basing $K$. \\

If $K$ is a null-homologous knot in $M$ then it can be shown that
$$\xymatrix{ \text{Ker}\{ H_1(\EK) \ar[r] & H_1(M) \} \cong \Z, }$$
generated by $\mK$.  If $L$ is another null-homologous knot in $M$ that is disjoint from $K$ then
$$\xymatrix{ [L] = n\mK \in \text{Ker}\{ H_1(\EK) \ar[r] & H_1(M) \} }$$
for some unique $n$.  Define the {\em linking number} of $K$ and $L$ by $lk(K, L) = n$.  An embedded curve $\laK$ in $\partial \EK$ is a {\em longitude} of $K$ if it represents a nontrivial element of $H_1(\partial \EK)$ and $lk(K, \laK) = 0$.  A longitude always exists (for null-homologous knots) and is unique up to isotopy in $\partial \EK$. \\

A choice of meridian $\mK$ and longitude $\laK$ determines, up to isotopy, a framing of $K$
$$t_K: \xymatrix{ S^1 \times D^2 \ar[r] & N(K) }$$
by  $t_K( \{1\} \times \partial D^2) = \mK$ and $t_K(S^1 \times \{1\}) = \laK$.  Since $M$ is oriented, we regard $S^1 \times D^2$ as having a fixed orientation and require that $t_K$ be orientation-preserving.  We call (the isotopy class of) $t_K$ the {\em canonical framing} determined by $\mK$ and $\laK$. \\

A knot $K$ in $M$ is {\em $L$-characteristic} if there is a continuous map $\alpha: M \to M$ such that $\alpha(K) = L$ and $\alpha(M-K) \subseteq M-L$. \\

Two knots $K$ and $L$ are said to be {\em concordant} if there is an oriented submanifold $C$ of $M\times I$ that is homeomorphic to $S^1 \times I$, meets the boundary of $M\times I$ transversely with $C \cap (M\times \{0\}) = L \times\{0\}$ and $C \cap (M\times \{1\}) = K \times\{1\}$, and such that the orientations of $L \times \{0\}$ and $K \times \{1\}$ agree with the orientations induced by $C$. \\

If $K$ and $L$ are concordant then we can always choose $C$ to be disjoint from $\{p\} \times I$.  We call $E_C = M\times I - N(C)$ the {\em exterior} of $C$ in $M\times I$, where $N(C)$ is an open normal neighborhood of $C$ that is disjoint from $\{p\} \times I$.  
The path $\rho(t) := p \times t$ induces an isomorphism $\rho_*: \pi_1(\EC, p\times 1) \to \pi_1(\EC, p\times 0)$ by $\rho_*(f) = \rho * f * \rho^{-1}$ (where we read concatenation of paths from left to right).  We therefore write $\pi_1(\EC, p\times 1) = \pi_1(\EC, p\times 0)$ and denote both of these groups by $\PC$. \\

If $C$ is a concordance between $K$ and $L$ then the inclusions $\iota_L : E_L \hookrightarrow E_C$ and $\iota_K : E_K \hookrightarrow E_C$ induce homomorphisms $\iota_L : \pi_L \to \pi_C$ and $\iota_K : \pi_K \to \pi_C$, respectively, and the inclusion $i_C: E_C \hookrightarrow M\times I$ induces an epimorphism $\g_C : \PC \rightarrow G = \pi_1(M)$ such that the following diagram commutes
$$\xymatrix{
\pi_{L} \ar[r]^{\iota_{L}} \ar[rd]_{\g_{L}} & \PC \ar[d]^{\g_C} & \pi_{K} \ar[l]_{\iota_{K}} \ar[ld]^{\g_{K}}\\
& \M.
}
$$


\begin{prop}
\label{conc is hmlgy conc}
Let $C$ be a concordance between two knots $K$ and $L$.  Then the inclusions $\iota_L : E_L \hookrightarrow E_C$ and $\iota_K : E_K \hookrightarrow E_C$ induce isomorphisms on $H_*( -; \Z[\M])$.
\end{prop}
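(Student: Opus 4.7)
The plan is to establish the isomorphism by a Mayer-Vietoris comparison together with a five-lemma induction; by the symmetric argument (working at $M\times\{1\}$ versus $M\times\{0\}$), it suffices to treat $\iota_K$. Write $M = E_K \cup N(K)$ with $E_K \cap N(K) = \partial E_K \cong T^2$, and $M \times I = E_C \cup N(C)$ with $E_C \cap N(C)$ equal to the tube surface $\partial N(C) \setminus \partial(M \times I) \cong T^2 \times I$. The inclusion of $M = M \times \{1\}$ into $M \times I$ restricts to compatible inclusions $T^2 \hookrightarrow T^2 \times I$, $N(K) \hookrightarrow N(C)$, and $\iota_K: E_K \hookrightarrow E_C$, and the coefficient systems $\gK$ and $\g_C$ (together with their restrictions to the subspaces appearing) are all induced from the inclusions into $M$ respectively $M \times I$, so they agree across the decomposition.

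The two Mayer-Vietoris sequences with $\Z[G]$-coefficients assemble into a commutative ladder of long exact sequences. In each five-term window, three of the vertical arrows are isomorphisms for every $n$: the maps induced by $T^2 \hookrightarrow T^2 \times I$, by $N(K) \hookrightarrow N(C)$, and by $M \hookrightarrow M \times I$ are each homotopy equivalences compatible with the coefficient systems. The first and third are standard; the second follows because $N(K) \cong S^1 \times D^2$ includes into $N(C) \cong S^1 \times I \times D^2$ as $S^1 \times \{1\} \times D^2$, with the generator of $\pi_1$ given by the core knot $K$, which has the same image in $G$ on both sides.

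Now induct on $n$, with the base case $n = 0$ following from path-connectedness and the compatibility of the $\pi_1$-images in $G$. At the induction step, the five-lemma applied to the degree-$n$ window shows that the direct-sum map
$$H_n(E_K; \Z[G]) \oplus H_n(N(K); \Z[G]) \longrightarrow H_n(E_C; \Z[G]) \oplus H_n(N(C); \Z[G])$$
is an isomorphism. Since this map splits as the direct sum of $(\iota_K)_*$ and the known isomorphism on the $N$-summand, $(\iota_K)_*$ is itself an isomorphism in degree $n$, completing the induction. The corresponding statement for $\iota_L$ follows by the same argument with $M = M \times \{0\}$ in place of $M \times \{1\}$.

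The main technical obstacle is verifying that the entire ladder commutes with $\Z[G]$-coefficients, i.e., that every auxiliary vertical map respects the coefficient systems. This reduces to the observation that each inclusion factors through the map $M \hookrightarrow M \times I$ (or its $t=0$ analogue) and hence induces compatible homomorphisms to $G$, which is immediate from the diagram recalled just before the proposition.
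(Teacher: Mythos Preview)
Your proof is correct and follows essentially the same Mayer--Vietoris comparison as the paper. Two minor remarks. First, the induction on $n$ is unnecessary: in the five-term window centered on $H_n(E_K;\Z[G])\oplus H_n(N(K);\Z[G])$, all four outer vertical maps are isomorphisms for every $n$ independently (each comes from a homotopy equivalence compatible with the coefficient system), so the five-lemma applies directly in each degree. Second, where you use only that $M\hookrightarrow M\times I$ is a homotopy equivalence, the paper instead invokes the asphericity of $M$ to conclude $H_n(M;\Z[G])=H_n(M\times I;\Z[G])=0$ for $n>0$ and then does a short diagram chase; your version is a bit cleaner and does not actually require asphericity for this particular proposition.
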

\begin{proof}
It is enough to prove the result for $K$.  Choose a path from $p\times 1$ to $K$; this will induce coefficient systems on $\partial \EK$ and $\partial \EC$.  The inclusion $\iota_K: \EK \to \EC$ induces the following diagram of Mayer-Vietoris sequences:
$$
\xymatrix{
H_{n+1}\left( M; \Z[\M] \right) \ar[r] \ar[d] & H_n\left( \partial \EK; \Z[\M] \right) \ar[r] \ar[d] & \text{$\phantom{MMMMM}$} \\
H_{n+1}\left( M\times I ; \Z[\M] \right) \ar[r] & H_n\left( \partial \EC; \Z[\M] \right) \ar[r] & \text{$\phantom{MMMMM}$} \\
& H_n\left( \EK; \Z[\M] \right) \oplus H_n\left( N(K) \Z[\M] \right) \ar[r] \ar[d] & H_n\left( M; \Z[\M] \right) \ar[d] \\
& H_n\left( \EC; \Z[\M] \right) \oplus H_n\left( N(C); \Z[\M] \right) \ar[r] & H_n\left( M\times I ; \Z[\M] \right)
}
$$
Since $M$ is aspherical and $\M$ is the fundamental group of $M$,
$$H_{n+1}\left( M; \Z[\M] \right) = H_{n}\left( M; \Z[\M] \right) = H_{n+1}\left( M\times I; \Z[\M] \right) = H_{n}\left( M \times I; \Z[\M] \right) = 0$$
for $n>0$.  The inclusions $\partial \EK \hookrightarrow \partial \EC$ and $N(K) \hookrightarrow N(C)$ are homotopy equivalences, as are their lifts to the $\M$-cover, and therefore induce isomorphisms on $H_n\left(-; \Z[\M] \right)$.  It follows from a diagram chase that $(\iota_K)_*: H_n\left( \EK; \Z[\M] \right) \to H_n\left( \EC; \Z[\M] \right)$ is an isomorphism for all $n > 0$.  For $n = 0$, the $G$-covers of $\EK$ and $\EC$ are connected since the coefficient systems $\gK: \PK \to G$ and $\g_C: \pi_C \to G$, respectively, are epimorphisms.  Hence, $(\iota_K)_*: H_0\left( \EK; \Z[\M] \right) \to H_0\left( \EC; \Z[\M] \right)$ is an isomorphism as well.
\end{proof}

\begin{prop}
\label{basics of \EK}
If $K$ is a null-homologous knot in $M$ that, when based, represents a homotopy class of infinite order then
\begin{enumerate}
\item $\EK$ is an Eilenberg-MacLane space, 

\item The inclusion $\partial \EK \hookrightarrow \EK$ induces isomorphisms 
$$\xymatrix{ H_i (\partial \EK; \Z[\M]) \cong H_i (\EK; \Z[\M]) }$$ 
for all $i\geq 1$,  

\item $H_i (\EK; \Z[\M]) = 0$ for all $i\geq 2$ and 
$$\xymatrix{ H_1 \left( \partial \EK; \Z[\M] \right) \cong H_1\left( \mK; \Z\left[ \frac{\M}{[K]\Z} \right] \right) = \bigoplus_{\frac{\M}{[K]\Z}} \Z }$$ 
as a $\Z[\M]$-module, where $[K]\Z \subset \M$ is the infinite cyclic subgroup generated by $[K]$ and $\dfrac{\M}{[K]\Z}$ is the set of right cosets of $[K]\Z$ in $\M$. 
\end{enumerate}
\end{prop}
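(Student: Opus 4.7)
The plan is to establish the three assertions in order: (1) by standard $3$-manifold topology, (2) by Mayer-Vietoris with local coefficients, and (3) by a direct cellular computation on the boundary torus.

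For (1), I would first show that $\EK$ is irreducible. Any embedded $2$-sphere $S \subset \EK$ also sits in $M$, which is irreducible, so $S$ bounds a ball $B \subset M$. This ball cannot contain $K$: otherwise $K$ would be null-homotopic in $M$, contradicting the hypothesis that $[K]$ has infinite order in $\pi_1(M)$. Hence $B \subset \EK$, proving irreducibility. Since $\EK$ is a compact, orientable, irreducible $3$-manifold with non-empty boundary, its universal cover is a simply-connected open $3$-manifold with $\pi_2 = 0$ by the Sphere Theorem, hence contractible by Hurewicz and Whitehead. Thus $\EK$ is an Eilenberg-MacLane space.

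For (2), I would apply Mayer-Vietoris to the decomposition $M = \EK \cup N(K)$ with intersection $\partial \EK$, taking twisted coefficients in $\Z[\M]$ pulled back from the inclusion into $M$. Asphericity of $M$ gives $H_n(M; \Z[\M]) = 0$ for $n \geq 1$. Next, $N(K) \simeq S^1$ and the coefficient system $\pi_1(N(K)) \to \M$ sends the generator to $[K]$, of infinite order, so the chain complex of $N(K)$ reduces to $\Z[\M] \xrightarrow{[K]-1} \Z[\M]$; the map $[K]-1$ is injective because left multiplication by $[K]$ acts freely on $\M$, giving $H_n(N(K); \Z[\M]) = 0$ for $n \geq 1$. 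Feeding these vanishings into Mayer-Vietoris forces the required isomorphism $H_n(\partial \EK; \Z[\M]) \cong H_n(\EK; \Z[\M])$ for $n \geq 1$.

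For (3), by (2) it suffices to compute $H_i(\partial \EK; \Z[\M])$ from the cellular chain complex of $T^2 = \partial \EK$ with one $0$-cell, $1$-cells corresponding to $\mK$ and $\laK$, and one $2$-cell. Under the coefficient data $\mK \mapsto 1$ and $\laK \mapsto [K]$, the boundary maps vanish on the $\mK$-direction and reduce to multiplication by $1-[K]$ on the $\laK$-direction. Using injectivity of $1-[K]$ on $\Z[\M]$ once more, direct computation yields $H_2 = 0$ and $H_1(\partial \EK; \Z[\M]) \cong \Z[\M]/(1-[K])\Z[\M] \cong \Z[\M/[K]\Z]$, a free abelian group on the right cosets. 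This module is naturally identified with $H_1(\mK; \Z[\M/[K]\Z])$, since the coefficient system on $\mK$ is trivial.

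The main obstacle will be (1): the $3$-manifold topology argument must correctly combine irreducibility of $M$ with the infinite-order hypothesis on $[K]$ to produce both irreducibility and asphericity of $\EK$. Once the chain-level setups in (2) and (3) are in place, the remaining content reduces to the single algebraic observation that $1-[K]$ is a non-zero-divisor on $\Z[\M]$, which is immediate from the infinite order of $[K]$.
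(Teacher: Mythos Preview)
Your proposal is correct and follows essentially the same route as the paper: Sphere Theorem plus irreducibility of $M$ and infinite order of $[K]$ for (1), Mayer--Vietoris with $\Z[G]$-coefficients on $M=\EK\cup N(K)$ for (2), and the observation that $1-[K]$ is a non-zero-divisor in $\Z[G]$ for the vanishing of $H_*(N(K);\Z[G])$ and the computation in (3). The only cosmetic difference is in (3): the paper phrases the computation geometrically, noting that the $[K]\Z$-cover of $\partial\EK$ is an infinite cylinder homotopy equivalent to $\mK$, whence $H_i(\partial\EK;\Z[G])\cong H_i(\mK;\Z[G/[K]\Z])$, whereas you carry out the equivalent cellular chain calculation directly; both yield the same answer.
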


\begin{proof}
(1)  The universal cover of $\EK$ is a simply connected 3-manifold with boundary and therefore has trivial homology in all dimensions greater than two.  We will show that $\pi_2 (\EK) = 0$, implying that the second homotopy group of the universal cover of $\EK$ is trivial.  Statement (1) then follows from Hopf's Theorem.  

Suppose that $\pi_2(\EK) \neq 0$.  By the Sphere Theorem there is an embedded sphere $S$ in $\EK$ representing a nonzero homotopy class of $\EK$.  Then $S$ is a sphere in $M$ and, since $M$ is irreducible, $S$ bounds an embedded ball $B \subset M$.  If $K$ were contained in $B$ then $[K]$ would not represent an element of infinite order in $\pi_1(M)$.  It follows that $B \subset \EK$, and therefore that $\pi_2 (\EK) = 0$.

(2)  From the $\Z[\M]$-coefficient Mayer-Vietoris sequence 
$$\xymatrix{ H_{n+1}\left( M; \Z[\M] \right) \ar[r] & H_{n}\left( \partial \EK; \Z[\M] \right) \ar[r] & \txt{$H_{n}\left( \EK; \Z[\M] \right)$\\ $\oplus$\\ $H_n \left( N(K); \Z[\M] \right)$} \ar[r] & H_{n}\left( M; \Z[\M] \right) }$$ 
it follows that 
$$ H_i \left( \partial \EK; \Z[\M] \right) \cong H_{i}\left( \EK; \Z[\M] \right) \oplus H_i \left(N(K); \Z[\M] \right) $$ 
for all $i\geq 1$.  As $[K]$ is of infinite order in $\M$, $H_i \left( N(K) ; \Z[\M] \right) = 0$ for $i\geq 1$.  Hence, the inclusion $\partial \EK \hookrightarrow \EK$ induces an isomorphism $H_i \left( \partial \EK; \Z[\M] \right) \cong H_i \left( \EK; \Z[\M] \right)$ for all $i\geq 1$.

(3)  Since $[K]$ is of infinite order in $\M$,
$$ H_i \left( \partial \EK; \Z \left[ \M \right] \right) \cong H_i \left( \mK; \Z \left[ \frac{\M}{[K]\Z} \right] \right) $$ 
for all $i \geq 0$, where $[K]\Z$ is the infinite cyclic subgroup of $\M$ generated by $[K]$ and $\dfrac{\M}{[K]\Z}$ is the set of right cosets of $[K]\Z$.  The coefficient system on $\mK$ is trivial because the image of $\mK$ in $\M$ is trivial.  Hence, as a $\Z[\M]$-module this equals zero if $i>1$ and $\bigoplus_{\frac{\M}{[K]\Z} } \Z$ if $i=1$.
\end{proof}

\section{Characteristic series and an injectivity lemma}
\label{Characteristic series and an injectivity theorem}

\subsection{Rational $G$-derived series}

\begin{defn}{}
For a fixed group $\M$, define the {\em category of groups over $\M$}, denoted $\mathcal{G}^{\M}$, to be the category whose objects are surjective homomorphisms of groups $\g_A:A \rightarrow G$ and whose morphisms are homomorphisms of groups $f: A \rightarrow B$ making the following diagram commute.
$$\xymatrix{
A \ar[dr]_{\g_A}\ar[rr]^f & & B \ar[dl]^{\g_B}\\
& \M
}
$$
Notice that if $\M = \{e\}$ we recover the category of groups.
\end{defn}

\begin{defn}
\label{Gamma}
For any object $\g_A$ of $\mathcal{G}^G$, we define 
$$\G \g_A = \text{Ker}\left\{ \g_A \right\}.$$
\end{defn}

\begin{defn}
\label{def of rat derived}
Define the {\em rational G-derived series} of an object $\g_A$ of $\mathcal{G}^G$ by
$$\G^{(0)}_r \g_A := \G \g_A \text{$\phantom{MM}$ and $\phantom{MM}$} \G^{(n+1)}_r \g_A := \text{Ker} \left\{ \G^{(n)}_r \g_A \to (\G^{(n)}_r \g_A)_{ab}\otimes_{\Z} \Q \right\}, $$ \\
where $(\G^{(n)}_r \g_A)_{ab}$ is the abelianization of $\G^{(n)}_r \g_A$.
\end{defn}

\begin{lem}
\label{derived is normal}
Each $\G^{(n)}_r \g_A$ is a normal subgroup of $A$, and the commutator subgroup $\left(\G \g_A\right)^{(n)}$is contained in $\G^{(n)}_r \g_A$.
\end{lem}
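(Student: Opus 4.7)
The plan is to prove both claims simultaneously by induction on $n$, with the normality claim being the nontrivial input at each step.

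For the base case $n=0$, we have $\Gamma^{(0)}_r \gamma_A = \Gamma \gamma_A = \ker \gamma_A$, which is normal in $A$ as the kernel of a homomorphism, and trivially $(\Gamma \gamma_A)^{(0)} = \Gamma \gamma_A = \Gamma^{(0)}_r \gamma_A$.

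For the inductive step on normality, assume $\Gamma^{(n)}_r \gamma_A \trianglelefteq A$. Given $a \in A$, conjugation by $a$ restricts to an automorphism $c_a$ of $\Gamma^{(n)}_r \gamma_A$. Any automorphism preserves the commutator subgroup, hence descends to an automorphism of the abelianization $(\Gamma^{(n)}_r \gamma_A)_{ab}$, and then tensoring with $\Q$ over $\Z$ is functorial so $c_a$ further induces an automorphism of $(\Gamma^{(n)}_r \gamma_A)_{ab} \otimes_\Z \Q$. By definition, $\Gamma^{(n+1)}_r \gamma_A$ is the kernel of the canonical surjection $\Gamma^{(n)}_r \gamma_A \to (\Gamma^{(n)}_r \gamma_A)_{ab} \otimes_\Z \Q$, and this kernel is preserved by the automorphism induced by $c_a$. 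Hence $a \, \Gamma^{(n+1)}_r \gamma_A \, a^{-1} \subseteq \Gamma^{(n+1)}_r \gamma_A$, establishing normality in $A$.

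For the commutator containment, assume inductively that $(\Gamma \gamma_A)^{(n)} \subseteq \Gamma^{(n)}_r \gamma_A$. Then
\[
(\Gamma \gamma_A)^{(n+1)} = \bigl[(\Gamma \gamma_A)^{(n)}, (\Gamma \gamma_A)^{(n)}\bigr] \subseteq \bigl[\Gamma^{(n)}_r \gamma_A, \Gamma^{(n)}_r \gamma_A\bigr].
\]
Every commutator in $\Gamma^{(n)}_r \gamma_A$ maps to $0$ in the abelianization, hence to $0$ in $(\Gamma^{(n)}_r \gamma_A)_{ab} \otimes_\Z \Q$, and so lies in $\Gamma^{(n+1)}_r \gamma_A$ by definition. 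This closes the induction.

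The main subtlety is just being careful that tensoring with $\Q$ does not interfere with preserving the subgroup under conjugation; this is immediate from the functoriality of abelianization and of $-\otimes_\Z \Q$, so there is no real obstacle. Both parts of the lemma follow formally from the recursive definition together with the fact that kernels of functorially constructed quotients are characteristic in the domain, and in particular invariant under conjugation by ambient elements of $A$ once the previous term of the series is already normal in $A$.
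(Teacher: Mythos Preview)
Your proof is correct and follows essentially the same inductive strategy as the paper: both argue that normality of $\Gamma^{(n)}_r \gamma_A$ in $A$ lets conjugation by elements of $A$ act on $\Gamma^{(n)}_r \gamma_A$, and this action preserves $\Gamma^{(n+1)}_r \gamma_A$; the commutator-containment induction is identical. The only cosmetic difference is that the paper verifies preservation elementwise (using that $a\in\Gamma^{(n+1)}_r \gamma_A$ iff some power $a^m$ lies in $[\Gamma^{(n)}_r \gamma_A,\Gamma^{(n)}_r \gamma_A]$, and conjugation commutes with taking powers and preserves commutator subgroups), whereas you phrase it functorially via the characteristic nature of the kernel of $\Gamma^{(n)}_r \gamma_A \to (\Gamma^{(n)}_r \gamma_A)_{ab}\otimes_\Z\Q$.
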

\begin{proof}$\G^{(0)}_r \g_A$ is clearly normal in $A$.  Suppose by induction that $\G^{(n)}_r \g_A$ is normal in $A$ and choose $a\in \G^{(n+1)}_r \g_A$ and $g\in A$.  As $\G^{(n)}_r \g_A$ is normal, $g^{-1}ag \in \G^{(n)}_r \g_A$.  Also, $a \in \G^{(n+1)}_r \g_A$ implies that there is an integer $m$ such that 
$$a^m \in [\G^{(n)}_r \g_A, \G^{(n)}_r \g_A].$$  
Since $(g^{-1}ag)^m = g^{-1}a^mg$, it follows that $g^{-1}ag$ is in the kernel of
$$\xymatrix{ \G^{(n)}_r \g_A \ar[r] &  \left( \G^{(n)}_r \g_A \right)_{ab} \otimes_{\Z} \Q \phantom{a}. }$$
Hence, $\G^{(n+1)}_r \g_A$ is normal in $A$. 

For the second statement, $\left(\G \g_A\right)^{(0)} = \G \g_A = \G^{(0)}_r \g_A$.  If $\left(\G \g_A\right)^{(n)} \subset \G^{(n)}_r \g_A$ then
$$\xymatrix@1{ \left(\G \g_A\right)^{(n+1)} = \left[ \left(\G \g_A\right)^{(n)}, \left(\G \g_A\right)^{(n)} \right] \subset \left[ \G^{(n)}_r \g_A, \G^{(n)}_r \g_A \right] \subset \G^{(n+1)}_r \g_A. }$$
\end{proof}

\begin{example}
Consider the following short exact sequence
$$\xymatrix{0\ar[r] & F_2 \ar[r]^(.4){\iota} & F\times \Z \ar[r]^(.5){\g} & \dfrac{F}{F_3} \ar[r] & 0, }$$
where $F$ is the free group on two generators $x$ and $y$, $\Z$ is generated by $t$, and $F_2$ is the commutator subgroup of $F$.  Define $\iota: F_2 \rightarrow F\times \Z$ by $\iota([x, y]^{\omega}) = [x, y]^{\omega}t^{-1}$ for any word $\omega \in F\times \Z$.  Then $\G \g \cong F_2$ is the free group on infinitely many generators and, because the quotients of the commutator series $\dfrac{(F_2)^{(n)}}{(F_2)^{(n+1)} }$ are $\Z$-torsion free for all $n$, $\G^{(n)}_r \g \cong (F_2)^{(n)}$.  One can show that $\dfrac{F}{F_3}$ is the fundamental group of the Heisenberg manifold, a circle bundle over the torus, and that $F \times \Z$ is the fundamental group of the complement of a fiber in this manifold. 
\end{example}

\begin{lem}
\label{rat is funct}
If $f: \g_A \to \g_B$ is a morphism in $\mathcal{G}^G$ then $f\left( \G^{(n)}_r \g_A \right) \subset \G^{(n)}_r \g_B$ for all $n$.
\end{lem}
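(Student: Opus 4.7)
The plan is to proceed by induction on $n$, using the inductive definition of the rational $G$-derived series together with the characterization that an element of $\G^{(n)}_r \g_A$ lies in $\G^{(n+1)}_r \g_A$ exactly when some positive power of it is a product of commutators from $\G^{(n)}_r \g_A$.

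For the base case $n = 0$, the fact that $f$ is a morphism in $\mathcal{G}^G$ says that $\g_B \circ f = \g_A$, so $f(\G^{(0)}_r \g_A) = f(\mathrm{Ker}\,\g_A) \subseteq \mathrm{Ker}\,\g_B = \G^{(0)}_r \g_B$.

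Assume inductively that $f(\G^{(n)}_r \g_A) \subseteq \G^{(n)}_r \g_B$. Pick any $a \in \G^{(n+1)}_r \g_A$. Since an element dies in an abelian group after tensoring with $\Q$ precisely when it is torsion, there exists a positive integer $m$ with $a^m \in [\G^{(n)}_r \g_A, \G^{(n)}_r \g_A]$; write $a^m = \prod_j [x_j, y_j]$ with each $x_j, y_j \in \G^{(n)}_r \g_A$. Applying $f$ gives
$$f(a)^m = f(a^m) = \prod_j [f(x_j), f(y_j)],$$
and by the inductive hypothesis each $f(x_j), f(y_j) \in \G^{(n)}_r \g_B$, so $f(a)^m \in [\G^{(n)}_r \g_B, \G^{(n)}_r \g_B]$. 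Also $f(a) \in \G^{(n)}_r \g_B$ by the inductive hypothesis. Hence the image of $f(a)$ in $(\G^{(n)}_r \g_B)_{ab} \otimes_{\Z} \Q$ is $m$-torsion and thus zero, so $f(a) \in \G^{(n+1)}_r \g_B$, completing the induction.

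There is no real obstacle here; the argument is almost purely formal, relying only on the fact that commutators map to commutators under a group homomorphism and that the kernel of the map to $(\cdot)_{ab}\otimes\Q$ consists precisely of those elements whose abelianized image is torsion. The one point worth emphasizing in writing this up is that the restriction $f|_{\G^{(n)}_r \g_A} : \G^{(n)}_r \g_A \to \G^{(n)}_r \g_B$ is well-defined by induction and is itself a group homomorphism, which is what permits passing to the induced map of abelianizations.
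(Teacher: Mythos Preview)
Your proof is correct and follows essentially the same approach as the paper: induction on $n$, with the inductive step using that $a\in\Gamma^{(n+1)}_r\gamma_A$ means $a^m\in[\Gamma^{(n)}_r\gamma_A,\Gamma^{(n)}_r\gamma_A]$ for some $m$, then applying $f$. Your write-up simply spells out a few details (writing the commutator product explicitly and noting the torsion interpretation in $(\cdot)_{ab}\otimes\Q$) that the paper leaves implicit.
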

\begin{proof}
The proof is by induction on $n$.  First note that $f\left( \G^{(0)}_r \g_A \right) \subset \G^{(0)}_r \g_B$ because $f$ is a morphism in $\mathcal{G}^G$.  Suppose that $f\left( \G^{(n)}_r \g_A \right) \subset \G^{(n)}_r \g_B$ and let $a \in \G^{(n+1)}_r \g_A$.  By the definition of $\G^{(n+1)}_r \g_A$ there is some $m \in \Z$ such that $a^m \in [\G^{(n)}_r \g_A, \G^{(n)}_r \g_A]$.  Hence, $f(a)^m = f(a^m)$ is in $[\G^{(n)}_r \g_B, \G^{(n)}_r \g_B]$, and it follows that $f(a) \in \G^{(n+1)}_r \g_B$.
\end{proof}

\begin{defn}
A group $G$ is {\em poly-torsion-free abelian} (PTFA) if it admits a normal series
$$\xymatrix@1{G = G_0 \rhd G_1 \rhd \cdots \rhd G_n = \{e\} }$$
for some $n\in \N$ with $\dfrac{G_i}{G_{i+1}}$ torsion-free abelian for all $i < n$. 
\end{defn}

\begin{remark} 
\label{PTFA example} $\phantom{}$
\begin{enumerate}
\item Any torsion-free abelian group is PTFA. \\
\item If $G$ is PTFA then it has no element of finite order.  \\
\item If $0 \rightarrow N \rightarrow P \rightarrow Q \rightarrow 0$ is a short exact sequence with $N$ torsion-free abelian and $Q$ PTFA then $P$ is PTFA. \\
\item Any subgroup of a PTFA group is PTFA.
\end{enumerate}
\end{remark}

\begin{lem}
\label{rat derived is PTFA}
If $G$ is PTFA then so is $\dfrac{A}{\G^{(n)}_r \g_{A}}$ for all $n$.
\end{lem}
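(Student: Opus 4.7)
The plan is to argue by induction on $n$, using the fact (Remark \ref{PTFA example}(3)) that the class of PTFA groups is closed under extensions with torsion-free abelian kernel. For the base case $n=0$, we have $A/\G^{(0)}_r \g_A = A/\G \g_A \cong G$ by the very definition of $\G \g_A$, and $G$ is PTFA by hypothesis.

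For the inductive step, assuming $A/\G^{(n)}_r \g_A$ is PTFA, I would invoke the normality of $\G^{(n)}_r \g_A$ and $\G^{(n+1)}_r \g_A$ in $A$ (Lemma \ref{derived is normal}) to produce the short exact sequence
$$1 \to \G^{(n)}_r \g_A / \G^{(n+1)}_r \g_A \to A/\G^{(n+1)}_r \g_A \to A/\G^{(n)}_r \g_A \to 1.$$
The right-hand quotient is PTFA by induction, so it suffices to show the kernel on the left is torsion-free abelian; then Remark \ref{PTFA example}(3) completes the induction. But by construction $\G^{(n+1)}_r \g_A$ is precisely the kernel of the map $\G^{(n)}_r \g_A \to (\G^{(n)}_r \g_A)_{ab} \otimes_{\Z} \Q$, so $\G^{(n)}_r \g_A / \G^{(n+1)}_r \g_A$ embeds into this $\Q$-vector space, and any subgroup of a $\Q$-vector space is torsion-free abelian.

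There is essentially no main obstacle: the rational $G$-derived series is engineered so that successive quotients are torsion-free abelian, which is exactly what is needed to feed into the closure property of PTFA groups under such extensions. The only subtlety worth flagging is that one needs $\G^{(n+1)}_r \g_A$ to be normal in $A$ (and not merely in $\G^{(n)}_r \g_A$) for the short exact sequence above to make sense, but this is precisely the content of Lemma \ref{derived is normal}.
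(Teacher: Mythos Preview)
Your proof is correct and follows essentially the same approach as the paper: induction on $n$, the base case $A/\G^{(0)}_r\g_A\cong G$, and for the inductive step the short exact sequence $1\to \G^{(n)}_r\g_A/\G^{(n+1)}_r\g_A\to A/\G^{(n+1)}_r\g_A\to A/\G^{(n)}_r\g_A\to 1$ together with Remark~\ref{PTFA example}(3). Your added remarks about the embedding into a $\Q$-vector space and the normality subtlety from Lemma~\ref{derived is normal} are fine elaborations of points the paper leaves implicit.
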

\begin{proof}
If $G$ is PTFA then so is $\dfrac{A}{\G^{(0)}_r \g_{A}} \cong G$.  Suppose by induction that $\dfrac{A}{\G^{(n)}_r \g_{A}}$ is PTFA and consider the following short exact sequence
$$
\xymatrix{
0 \ar[r] & \dfrac{\G^{(n)}_r \g_{A}}{\G^{(n+1)}_r \g_{A}} \ar[r] & \dfrac{A}{\G^{(n+1)}_r \g_{A}} \ar[r] & \dfrac{A}{\G^{(n)}_r \g_{A}} \ar[r] & 0.
}
$$
The kernel $\dfrac{\G^{(n)}_r \g_{A}}{\G^{(n+1)}_r \g_{A}}$ is torsion-free abelian by construction.  Hence, $\dfrac{A}{\G^{(n+1)}_r \g_{A}}$ is PTFA by (3) of Remark \ref{PTFA example}.
\end{proof}

\subsection{The rational $G$-local derived series}
We now construct the {\em rational $G$-local derived series} and obtain our injectivity lemma, Lemma \ref{quotient monomorphisms}.  The proof of Theorem \ref{M-localization}, which asserts the existence of a localization on $\mathcal{G}^G$, can be found in Section \ref{const of localization}.

\begin{defn}
\label{Omega}
Denote by $\Omega^G$ the class of morphisms in $\mathcal{G}^G$ satisfying the following properties:
\begin{enumerate}
\item $f: \g_A \rightarrow \g_B$ is a morphism with $A$ finitely generated and $B$ finitely presented,
\item $\G \g_{A}$ and $\G \g_{B}$ are finitely normally generated in $A$ and $B$, respectively,
\item $f$ induces a normal surjection $\G \g_{A} \rightarrow \G \g_{B}$, and
\item $f_{i}: H_{i}(A; \Z[G]) \rightarrow H_{i}(B; \Z[G])$ is an isomorphism for $i=1$ and an epimorphism for $i = 2$. 
\end{enumerate}
\end{defn}

\begin{defn}
\label{local object}
An object $\g_X \in \mathcal{G}^G$ is called {\em $\Omega^G$-local} if for any morphism $\g_A \rightarrow \g_B$ in $\Omega^G$ and any morphism $\g_A \rightarrow \g_X$, there is a unique morphism $\g_B \rightarrow \g_X$ making the following diagram commute:
$$
\xymatrix{
A \ar[r] \ar[d] & B \ar[dl]\\
X 
}$$
\end{defn}

\begin{defn}
A \emph{localization} is a pair $(E, p)$, where $E: \mathcal{G}^G \rightarrow \mathcal{G}^G$ is a functor and $p: id_{\mathcal{G}^G} \rightarrow E$ is a natural transformation such that for any morphism $\g_A \rightarrow \g_X$ into a local object $\g_X$ there is a unique morphism $E(\g_A) \rightarrow \g_X$ making the following diagram commute,
$$
\xymatrix{
A \ar[r] \ar[d] & E(A) \ar[dl] \phantom{a} \\
X
}
$$
where $E(\g_A): E(A) \rightarrow G$.  To be consistent with the notation of $\mathcal{G}^G$ we write $\g_{E(A)}$ for $E(\g_A)$ from now on. 
\end{defn}

\begin{thrm}
\label{M-localization} 
There is a localization $(E, p)$ on $\mathcal{G}^G$ under which elements of $\Omega^G$ become isomorphisms.
\end{thrm}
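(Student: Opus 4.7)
The plan is to construct $E$ by transfinite iteration of pushouts along morphisms of $\Omega^G$, adapting Levine's algebraic closure construction \cite{alg.closureII} to the category $\mathcal{G}^G$.

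First, I would verify that $\Omega^G$ has the closure properties needed to run a small object argument: it contains all isomorphisms, is closed under composition, and, most importantly, is closed under cobase change in $\mathcal{G}^G$. Composition closure is immediate from Definition \ref{Omega}. For cobase change, given $f:\g_A \to \g_B$ in $\Omega^G$ and any morphism $g:\g_A \to \g_C$ in $\mathcal{G}^G$, I would form the amalgamated product $B\ast_A C$ with its induced epimorphism to $G$, and show that the induced morphism $\g_C \to \g_{B\ast_A C}$ lies in $\Omega^G$. Finite presentation of $B\ast_A C$ and finite normal generation of $\ker(\g_{B\ast_A C})$ transfer from the corresponding properties of $f$, while condition (4) is preserved by comparing the Mayer--Vietoris sequences in $H_\ast(-;\Z[G])$ associated to the pushout and applying the five lemma.

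Second, I would construct $E$ transfinitely. Put $E_0(\g_A) := \g_A$; at a successor ordinal, let $E_{\alpha+1}(\g_A)$ be the colimit in $\mathcal{G}^G$ of all diagrams $\varphi : E_\alpha(\g_A) \to \g_B$ with $\varphi \in \Omega^G$; at a limit ordinal, take direct limits. A cardinality/small object argument, enabled by the finite generation built into condition (1) of Definition \ref{Omega}, shows the sequence stabilizes at some ordinal $\lambda$, and I would set $E(\g_A) := E_\lambda(\g_A)$. The canonical structure maps assemble into a natural transformation $p:\mathrm{id}_{\mathcal{G}^G} \to E$, and functoriality on morphisms of $\mathcal{G}^G$ follows from the universal property of colimits.

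Third, I would verify the universal property. Because the transfinite iteration has stabilized, every morphism $E(\g_A) \to \g_B$ in $\Omega^G$ must be an isomorphism, so $E(\g_A)$ is $\Omega^G$-local in the sense of Definition \ref{local object}. Given any morphism $\g_A \to \g_X$ into a local object $\g_X$, induction on $\alpha$ using the local property of $\g_X$ produces compatible lifts $E_\alpha(\g_A) \to \g_X$ at each stage, and hence a unique lift $E(\g_A) \to \g_X$. This yields the desired localization, and by construction the morphisms in $\Omega^G$ are inverted.

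The main obstacle is step one: showing that condition (4) is preserved under cobase change. One must analyze $H_\ast(-;\Z[G])$ of an amalgamated free product $B\ast_A C$ equipped with a nonabelian coefficient system extending from $A$, and verify that an $H_1$-isomorphism together with an $H_2$-epimorphism survives the pushout. This $\Z[G]$-coefficient homological stability is precisely where the delicate work of Section \ref{const of localization} should lie.
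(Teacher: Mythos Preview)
Your approach is genuinely different from the paper's, and as written it has two real gaps. The paper does not run a small object argument; Section~\ref{const of localization} instead introduces \emph{systems of equations} over $\g_A$, calls $\g_X$ \emph{algebraically closed} when every such system has a unique solution, and builds $E(\g_A)=\g_{\hat A}$ in two concrete steps: adjoin formal solutions to all systems to obtain $\tilde A$, then quotient by the union of all $\Pi$-perfect subgroups. The key content is Theorem~\ref{AC = local}, which identifies $\Omega^G$-local objects with algebraically closed ones; inversion of $\Omega^G$ (Theorem~\ref{alg clos localized omega}) is then a short formal consequence.

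Your successor step is not the right one. You form $E_{\alpha+1}(\g_A)$ as a colimit over morphisms $\varphi:E_\alpha(\g_A)\to\g_B$ in $\Omega^G$, but condition~(1) of Definition~\ref{Omega} forces the \emph{domain} of every $\Omega^G$-morphism to be finitely generated. Already $E_1(\g_A)$ is an enormous amalgam and will not be finitely generated, so for $\alpha\ge 1$ there are no such $\varphi$ and your tower freezes after one step without producing a local object. (For the same reason, $\Omega^G$ is \emph{not} closed under arbitrary cobase change: pushing $f$ out along $g:\g_A\to\g_C$ with $C$ not finitely generated yields a morphism violating condition~(1).) The usual small object argument pushes out along spans $\g_{B'}\leftarrow\g_{A'}\to E_\alpha(\g_A)$ with $\g_{A'}\to\g_{B'}$ in $\Omega^G$; that is what you would need here.

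Even with the corrected iteration, a second gap remains: the small object argument manufactures \emph{existence} of extensions, not the \emph{uniqueness} demanded by Definition~\ref{local object}. Morphisms in $\Omega^G$ need not be surjective---condition~(3) only says $f(\G\g_A)$ \emph{normally} generates $\G\g_B$---so they are not epimorphisms in $\mathcal{G}^G$, and two extensions $B\to E(\g_A)$ of a given $A\to E(\g_A)$ can differ. In the paper this is exactly where $\Pi$-perfect subgroups enter: by Theorem~\ref{Pi-perfect iff nonunique solutions}, distinct solutions to a system produce a nontrivial $\Pi$-perfect subgroup, and the explicit quotient by $I$ in the construction of $\hat A$ is what kills these and forces uniqueness. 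Your transfinite pushout never performs such a quotient, so the object it builds has no reason to be local. The homological stability of condition~(4) under pushout, which you flag as the main obstacle, is comparatively routine; the missing ingredient is the $\Pi$-perfect machinery.
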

\begin{proof}
The proof can be found in Section \ref{const of localization}.
\end{proof}

\begin{defn}
\label{rat loc derived}
Recall from Definition \ref{Gamma} that $\G \g_A = \text{Ker}\left\{ \g_A \right\}$.  Define the {\em rational $G$-local derived series} of an object $\g_A$ in $\mathcal{G}^G$ by
$$\G^{(0)}\g_A := \G \g_A$$
and
$$\G^{(n)}\g_A := \text{Ker}\left\{ A \to \dfrac{E(A)}{\G^{(n)}_r \g_{E(A)}} \right\} .$$
For $\g_A: A \to G$ this defines a normal series
$$\xymatrix{ A \ar@{}[r]|{\unrhd} & \G \g_A  \ar@{}[r]|{\unrhd} & \G^{(1)} \g_A  \ar@{}[r]|{\unrhd} & \cdots  \ar@{}[r]|{\unrhd} & \G^{(n)} \g_A  \ar@{}[r]|{\unrhd} & \cdots . }$$
\end{defn}

\begin{lem}
\label{loc is PTFA}
If $G$ is PTFA then so is $\dfrac{A}{\G^{(n)}\g_A}$ for all $n$.
\end{lem}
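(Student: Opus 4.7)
The plan is to exploit the defining property of $\G^{(n)}\g_A$ as the kernel of the map $A \to \dfrac{E(A)}{\G^{(n)}_r \g_{E(A)}}$, so that $\dfrac{A}{\G^{(n)}\g_A}$ embeds as a subgroup of $\dfrac{E(A)}{\G^{(n)}_r \g_{E(A)}}$. Combined with Remark \ref{PTFA example}(4), which says that subgroups of PTFA groups are PTFA, it suffices to show that $\dfrac{E(A)}{\G^{(n)}_r \g_{E(A)}}$ is PTFA.

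First I would observe that $\g_{E(A)}: E(A) \to G$ is an object of $\mathcal{G}^G$; this is by construction of the localization, since $E$ is a functor on $\mathcal{G}^G$ (and we have made the notational convention that $\g_{E(A)}$ denotes $E(\g_A)$). In particular $\g_{E(A)}$ is a surjection onto $G$, so Lemma \ref{rat derived is PTFA} applies directly to the object $\g_{E(A)}$: since $G$ is PTFA, $\dfrac{E(A)}{\G^{(n)}_r \g_{E(A)}}$ is PTFA for every $n$.

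Next I would verify the injection. By Definition \ref{rat loc derived}, $\G^{(n)}\g_A$ is precisely the kernel of the composition $A \xrightarrow{p_A} E(A) \to \dfrac{E(A)}{\G^{(n)}_r \g_{E(A)}}$, where $p_A$ is the component of the natural transformation $p$ at $\g_A$. Hence the induced map
$$\dfrac{A}{\G^{(n)}\g_A} \longhookrightarrow \dfrac{E(A)}{\G^{(n)}_r \g_{E(A)}}$$
is a monomorphism, realizing $\dfrac{A}{\G^{(n)}\g_A}$ as a subgroup of a PTFA group.

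The conclusion then follows immediately from Remark \ref{PTFA example}(4). The only potential obstacle is the minor bookkeeping step that $\g_{E(A)}$ is indeed an epimorphism onto $G$, but this is built into the statement that $E$ is a functor $\mathcal{G}^G \to \mathcal{G}^G$ in Theorem \ref{M-localization}, so no real work is required.
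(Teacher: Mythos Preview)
Your proof is correct and follows essentially the same approach as the paper: both use the defining kernel property to embed $\dfrac{A}{\G^{(n)}\g_A}$ into $\dfrac{E(A)}{\G^{(n)}_r \g_{E(A)}}$, invoke Lemma~\ref{rat derived is PTFA} to see the latter is PTFA, and then appeal to Remark~\ref{PTFA example}(4). The only addition in your write-up is the explicit remark that $\g_{E(A)}$ is an object of $\mathcal{G}^G$, which the paper leaves implicit.
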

\begin{proof}
Each $\G^{(n)}\g_A$ is normal as, by definition, it is the kernel of a homomorphism on $A$.  By construction, the homomorphism $A \to \dfrac{E(A)}{\G^{(n)}_r \g_{E(A)}}$ induces a monomorphism
$$\dfrac{A}{\G^{(n)}\g_A} \to \dfrac{E(A)}{\G_r^{(n)}\g_{E(A)}}. $$
Suppose that $G$ is PTFA.  Then $\dfrac{E(A)}{\G_r^{(n)}\g_{E(A)}}$ is PTFA by Lemma \ref{rat derived is PTFA} and, because the class of PTFA groups is closed under subgroups (as noted in (4) of Remark \ref{PTFA example}), it follows that $\dfrac{A}{\G^{(n)}\g_A}$ is PTFA . 
\end{proof}

\begin{lem}
\label{derived to derived}
If $f: \g_A \rightarrow \g_B$ is a morphism in $\mathcal{G}^G$ then, for all $n$, $f(\G^{(n)} \g_A) \subset \G^{(n)} \g_B$. 
\end{lem}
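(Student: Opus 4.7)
The plan is to combine the functoriality of the localization $E$ with the functoriality already established for the rational $G$-derived series in Lemma \ref{rat is funct}. The statement is essentially a routine chase once the correct diagram is set up, so the main task is just organizing the ingredients rather than finding a substantive idea.

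First I would dispose of the base case $n=0$ by observing that $f(\G \g_A) \subset \G \g_B$ is immediate from $\g_B \circ f = \g_A$, i.e.\ from the fact that $f$ is a morphism in $\mathcal{G}^G$. For $n \ge 1$, I would invoke Theorem \ref{M-localization}: the natural transformation $p : \mathrm{id}_{\mathcal{G}^G} \to E$ supplies morphisms $p_A : \g_A \to \g_{E(A)}$ and $p_B : \g_B \to \g_{E(B)}$, and the functor $E$ applied to $f$ yields a morphism $E(f) : \g_{E(A)} \to \g_{E(B)}$ fitting into the naturality square
$$
\xymatrix{
A \ar[r]^{p_A} \ar[d]_f & E(A) \ar[d]^{E(f)}\\
B \ar[r]_{p_B} & E(B).
}
$$

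Next I would apply Lemma \ref{rat is funct} to the morphism $E(f)$ in $\mathcal{G}^G$ to conclude $E(f)\bigl(\G^{(n)}_r \g_{E(A)}\bigr) \subset \G^{(n)}_r \g_{E(B)}$. Combined with the naturality square, this says that the composition $A \xrightarrow{f} B \xrightarrow{p_B} E(B) \twoheadrightarrow E(B)/\G^{(n)}_r \g_{E(B)}$ factors through $A \xrightarrow{p_A} E(A) \twoheadrightarrow E(A)/\G^{(n)}_r \g_{E(A)}$.

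Finally I would unwind the definition $\G^{(n)} \g_A = \mathrm{Ker}\bigl\{ A \to E(A)/\G^{(n)}_r \g_{E(A)}\bigr\}$. If $a \in \G^{(n)} \g_A$, then $p_A(a) \in \G^{(n)}_r \g_{E(A)}$, so $p_B(f(a)) = E(f)(p_A(a)) \in \G^{(n)}_r \g_{E(B)}$, which is exactly the statement that $f(a) \in \G^{(n)} \g_B$. The only step one might call an obstacle is remembering that $\G^{(n)} \g_A$ is defined as a kernel of a map out of $A$ (not out of $E(A)$), so that the conclusion $f(a) \in \G^{(n)} \g_B$ really is a statement about elements of $B$; once the diagram above is in place, this is automatic.
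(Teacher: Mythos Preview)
Your proposal is correct and follows essentially the same route as the paper: both assemble the naturality square $E(f)\circ p_A = p_B\circ f$, apply Lemma~\ref{rat is funct} to $E(f)$ to pass to the quotients $E(A)/\G^{(n)}_r\g_{E(A)}\to E(B)/\G^{(n)}_r\g_{E(B)}$, and then read off the inclusion of kernels from the resulting commutative diagram. The only cosmetic difference is that you single out $n=0$, which is unnecessary since the general kernel description already covers that case.
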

\begin{proof}
The morphism $E(f): \g_{E(A)} \to \g_{E(B)}$ induces a homomorphism 
$$\overline{E(f)}: \dfrac{E(A)}{ \G^{(n)}_r \g_{E(A)} } \to \dfrac{E(B)}{ \G^{(n)}_r \g_{E(B)} }$$
for all $n$ by Lemma \ref{rat is funct}.  We therefore have the following commutative diagram:
$$
\xymatrix{
A \ar[r]^f \ar[d] & B \ar[d] \\
E(A) \ar[r]^{E(f)} \ar[d] & E(B) \ar[d] \\
\dfrac{E(A)}{\G^{(n)}_r \g_{E(A)}} \ar[r]^{\overline{E(f)}} & \dfrac{E(B)}{\G^{(n)}_r \g_{E(B)}}
}
$$
By definition, $\G^{(n)} \g_A$ and $\G^{(n)} \g_B$ are the respective kernels of the (composite) vertical homomorphisms.  It follows from a simple diagram chase that $f(\G^{(n)} \g_A) \subset \G^{(n)} \g_B$. 
\end{proof}

\begin{lem}
\label{quotient monomorphisms}
If $f: \g_A \rightarrow \g_B$ is in $\Omega^G$ then $f$ induces a monomorphism $\overline{f}: \dfrac{A}{\G^{(n)} \g_A} \to \dfrac{B}{\G^{(n)} \g_B}$ for all $n$.
\end{lem}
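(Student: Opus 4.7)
The plan is to leverage the universal property of the localization to reduce the lemma to a routine diagram chase. Since $f$ belongs to $\Omega^G$, Theorem \ref{M-localization} implies that $E(f): \gamma_{E(A)} \to \gamma_{E(B)}$ is an isomorphism in $\mathcal{G}^G$. This is the only place where the hypothesis $f \in \Omega^G$ (as opposed to just $f$ being a morphism in $\mathcal{G}^G$) is used.

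Next, I would apply Lemma \ref{rat is funct} to both $E(f)$ and its inverse $E(f)^{-1}$ to obtain
$$E(f)\bigl(\Gamma^{(n)}_r \gamma_{E(A)}\bigr) \subseteq \Gamma^{(n)}_r \gamma_{E(B)} \text{ and } E(f)^{-1}\bigl(\Gamma^{(n)}_r \gamma_{E(B)}\bigr) \subseteq \Gamma^{(n)}_r \gamma_{E(A)}.$$
Together these inclusions force $E(f)$ to carry $\Gamma^{(n)}_r \gamma_{E(A)}$ bijectively onto $\Gamma^{(n)}_r \gamma_{E(B)}$, and hence the induced map
$$\overline{E(f)}: \dfrac{E(A)}{\Gamma^{(n)}_r \gamma_{E(A)}} \longrightarrow \dfrac{E(B)}{\Gamma^{(n)}_r \gamma_{E(B)}}$$
is an isomorphism for every $n$.

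Finally, combine this with the naturality of the localization $p: \mathrm{id} \to E$. By Definition \ref{rat loc derived}, $\Gamma^{(n)}\gamma_A$ is by construction the kernel of the composite $A \to E(A) \to E(A)/\Gamma^{(n)}_r \gamma_{E(A)}$, so the induced map on the quotient is injective, and likewise for $B$. The naturality square for $p$ then fits into the commutative diagram
$$
\xymatrix{
A/\Gamma^{(n)}\gamma_A \ar[r]^{\overline{f}} \ar@{^{(}->}[d] & B/\Gamma^{(n)}\gamma_B \ar@{^{(}->}[d] \\
E(A)/\Gamma^{(n)}_r \gamma_{E(A)} \ar[r]^{\overline{E(f)}}_{\cong} & E(B)/\Gamma^{(n)}_r \gamma_{E(B)}.
}
$$
The left vertical arrow is injective by construction and the bottom arrow is an isomorphism by the previous paragraph, so the composite across the top-right corner is injective. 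It follows that $\overline{f}$ itself is injective, as required.

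There is really no serious obstacle at this stage: all of the difficulty has been absorbed into Theorem \ref{M-localization}, whose proof is postponed to Section \ref{const of localization}. The only point that requires momentary care is that functoriality of $\Gamma^{(n)}_r$, established in Lemma \ref{rat is funct} for one-sided morphisms, must be applied to both $E(f)$ and $E(f)^{-1}$ in order to upgrade the inclusion $E(f)(\Gamma^{(n)}_r \gamma_{E(A)}) \subseteq \Gamma^{(n)}_r \gamma_{E(B)}$ to an equality; this is precisely where one uses that $E(f)$ is an isomorphism and not merely a surjection.
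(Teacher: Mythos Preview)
Your proof is correct and is essentially the same argument as the paper's: set up the commutative square with injective vertical maps (by definition of $\Gamma^{(n)}\gamma_A$ and $\Gamma^{(n)}\gamma_B$), observe that $E(f)$ is an isomorphism by Theorem~\ref{M-localization} so the bottom map $\overline{E(f)}$ is an isomorphism, and conclude. The paper cites Lemma~\ref{derived to derived} for commutativity of the square and is terser about why $\overline{E(f)}$ is an isomorphism, but your explicit use of Lemma~\ref{rat is funct} on both $E(f)$ and $E(f)^{-1}$ fills in exactly that step.
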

\begin{proof}
By Lemma \ref{derived to derived} we have the following commutative diagram:
$$
\xymatrix{
\dfrac{A}{\G^{(n)} \g_A} \ar[r]^{\overline{f}} \ar[d] & \dfrac{B}{\G^{(n)} \g_B} \ar[d] \\
\dfrac{E(A)}{\G^{(n)}_r \g_{E(A)}} \ar[r]^{\overline{E(f)}} & \dfrac{E(B)}{\G^{(n)}_r \g_{E(B)}}
}
$$
The vertical maps are monomorphisms because $\G^{(n)} \g_A$ and $\G^{(n)} \g_B$ are defined to be the kernels of $A \rightarrow \dfrac{E(A)}{\G^{(n)} \g_{E(A)}}$ and $B \rightarrow \dfrac{E(B)}{\G^{(n)} \g_{E(B)}}$, respectively.  Also, $E(f): E(A) \rightarrow E(B)$ is an isomorphism since $f$ is in $\Omega^G$.  The bottom map is therefore an isomorphism, and it follows that $\overline{f}$ is a monomorphism.
\end{proof}

\section{Review of $L^2$-methods} 
\label{Review of L^2 methods}

In this section we review $L^2$-signatures and the Cheeger-Gromov $\rho$-invariant.  Most of the section closely follows \cite{cot}, \cite{luck}, and \cite{reich}.  Other recommended references are \cite{FollandRealAnalysis} and \cite{ReedSimon}.  {\em In this section, and only this section, we use $G$ to denote an arbitrary group}.  In the next section we will be interested in $L^2$-signatures of operators in $\text{Herm}_n \left( \mathcal{U}\left( \frac{A}{\G^{(n)} \g_A} \right) \right)$ for an object $\g_A \in \mathcal{G}^G$. \\

Recall that a complex vector space with inner product is a Hilbert space if it is complete with respect to the norm induced by the inner product.  For a countable discrete group $G$, $l^2(G)$ is the Hilbert space of all square-summable formal sums over $G$ with complex coefficients.  An element of $l^2(G)$ is of the form
$$ \sum_{g\in G} z_g g$$
with $z_g \in \C$ and $\sum_{g\in G} |z_g|^2 < \infty$.  The inner product on $l^2(G)$ is given by
$$ \left\langle \sum_{g\in G} z_g g, \sum_{g\in G} v_g g \right\rangle = \sum_{g\in G} z_g \overline{v}_g. $$
The group $G$ acts isometrically on $l^2(G)$ on the left by multiplication, 
$$h \cdot  \sum_{g\in G} z_g g =  \sum_{g\in G} z_g hg. $$
Right multiplication induces an embedding of $\C G$ into $\mathcal{B}(l^2(G))$, the space of bounded operators on $l^2(G)$.  The {\em group von Neumann algebra} $\mathcal{N}(G)$ of $G$ is the algebra of left $G$-equivariant bounded operators on $l^2(G)$.  In particular, $\C G$ is a subalgebra of $\mathcal{N}(G)$.  The {\em von Neumann trace} $tr_G : \mathcal{N}(G) \to \C$ is defined by
$$tr_G(f) = \langle f(e), e \rangle_{l^2(G)} $$
where $e$ is the identity element in $G$.  The trace is symmetric in that $tr_G (fh) = tr_G (hf)$, and if $tr_G (f^* f) = 0$ then $f = 0$.  Extend $tr_G$ to $n\times n$-matrices over $\mathcal{N}(G)$ by
$$tr_G \left( (f_{ij})_{i, j=1}^n \right) = \sum_{i=1}^n tr_G(f_{ii}) $$ 
for $(f_{ij})_{i, j=1}^n \in M_n \left( \mathcal{N}(G) \right)$. 

\begin{defn}
Let $\mathcal{U}(G)$ denote the algebra of operators affiliated to $\mathcal{N}(G)$.  An element $f \in \mathcal{U}(G)$ is a possibly unbounded operator on $l^2(G)$ satisfying the following conditions: 
\begin{enumerate}
\item The domain of $f$, $dom(f)$, is dense in $l^2(G)$, 
\item The graph of $f$ is closed in $l^2(G) \times l^2(G)$, and 
\item For every $g \in \mathcal{N}(G)' = \left\{ g\in \mathcal{B}(l^2(G)) | gm = mg \text{ for all } m \in \mathcal{N}(G) \right\}$,
	$$dom(gf) \subset dom(fg)$$
	and $gf = fg$ on $dom(gf)$. 
\end{enumerate}
\end{defn}

The set $\mathcal{U}(G)$ is a $*$-algebra that contains $\mathcal{N}(G)$ as a subalgebra and a von Neumann regular ring (i.e. it contains ``weak inverses").  Moreover, it is canonically isomorphic to the classical right ring of fractions of $\mathcal{N}(G)$ with respect to all non-zero divisors.  Let $\text{Herm}_n\left(\mathcal{U}(G) \right)$ denote the set of self-adjoint $n\times n$-matrices with coefficients in $\mathcal{U}(G)$.  Any matrix $M \in \text{Herm}_n\left(\mathcal{U}(G) \right)$ is a self-adjoint (unbounded) operator on $l^2(G)^n$.
The characteristic functions $p_+$ and $p_-$ onto the positive and negative spectrum, respectively, are bounded Borel functions.  Hence, the functional calculus for unbounded self-adjoint operators and bounded Borel functions defines bounded operators $p_+(M)$ and $p_-(M)$ on $l^2(G)^n$.  That is, $p_+(M)$ and $p_-(M)$ are elements of $M_n\left(\mathcal{N}(G) \right)$. 

\begin{defn}
Define the $L^2$-signature of $M \in \text{Herm}_n\left(\mathcal{U}(G) \right)$ by
$$ \sigma^{(2)}_G (M) := tr_G(p_+(M)) - tr_G(p_-(M)). $$
The real number $\sigma^{(2)}_G (M)$ only depends on the $G$-isometry class of $M$ \cite{cot}. 
\end{defn}

The next definition makes use of the fact that $\mathcal{U}(G)$ is a von Neumann regular ring because then every finitely presented $\mathcal{U}(G)$-module is projective \cite{reich}.
\begin{defn}
Let $W$ be an oriented $4$-manifold with coefficient system $\g_W: \pi_1(W) \rightarrow G$.  The intersection form 
$$\xymatrix{ h_{\g_W}: H_2\left(W; \mathcal{U}(G) \right) \ar[r] & H_2\left(W, \partial W; \mathcal{U}(G) \right) \ar[r]^{PD} &  \text{$\phantom{MMMMM}$} \\
& H^2\left(W; \mathcal{U}(G)\right) \ar[r]^(.35){\kappa} & \text{Hom}_{\mathcal{U}(G)}\left( H_2\left(W; \mathcal{U}(G) \right), \mathcal{U}(G) \right) }$$
is an element of $\text{Herm}_n\left(\mathcal{U}(G) \right)$ and the {\em $L^2$-signature of $W$} is defined by
$$ \sigma^{(2)}(W, \g_W) := \sigma^{(2)}_G \left( h_{\g_M} \right) . $$ 
\end{defn}

Cheeger and Gromov defined the {\em von Neumann $\rho$-invariant}, an oriented homeomorphism invariant of regular covers of 3-manifolds \cite{cheegerg-gromov}.  If $M$ is a 3-manifold and $\g_M: \pi_1(M) \to G$ is a coefficient system then $\rho(M, \g_M)$ is a real number.  The following theorem is due to Ramachandran \cite{ram} (see also \cite{luck-schick}).  {\em Recall that in this section we use $G$ to denote an arbitrary group}.

\begin{thrm}
\label{ram}
If $W$ is a compact, oriented 4-manifold such that $M = \partial W$ and if $\g_M: \pi_1(M) \to G$ extends to $\g_W: \pi_1(W) \to G$ then 
$$ \rho(M, \g_M) = \sigma^{(2)}(W, \g_W) - \sigma(W), $$
where $\sigma(W)$ is the ordinary signature of $W$. 
\end{thrm}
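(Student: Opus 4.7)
The plan is to realize both sides of the equation as signature defects coming from an index-theoretic formula, and to show that the local (integrand) contributions cancel when one takes the difference, leaving only the boundary $\eta$-invariants whose difference is by definition $\rho(M,\gamma_M)$.

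First I would set up the signature operator $D$ on $W$ with coefficients twisted by the flat $\mathcal{N}(G)$-bundle associated to $\gamma_W$, together with its untwisted counterpart. The classical Atiyah-Patodi-Singer index theorem gives
\[
\sigma(W) \;=\; \int_W L(W) \;-\; \tfrac{1}{2}\eta(M),
\]
where $L(W)$ is the Hirzebruch $L$-polynomial in the Pontryagin forms and $\eta(M)$ is the reduced eta invariant of the tangential signature operator on $M$. The corresponding statement for the $\mathcal{U}(G)$-valued intersection form requires an $L^2$-version of APS: the $\Gamma$-equivariant signature operator on the cover $\widetilde W\to W$ associated to $\gamma_W$ is Fredholm in the von Neumann sense, and
\[
\sigma^{(2)}(W,\gamma_W) \;=\; \int_W L(W) \;-\; \tfrac{1}{2}\eta^{(2)}(M,\gamma_M),
\]
where $\eta^{(2)}$ is the $L^2$-eta invariant defined from the $\Gamma$-trace of the heat kernel of the lifted signature operator on the boundary. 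The integrand $L(W)$ is identical in the two formulas because the lifted Riemannian structure on $\widetilde W$ is locally isometric to that on $W$ and the $L$-form is a local expression in the curvature; equivalently, the local index densities of $D$ and its twisted analogue agree pointwise.

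Subtracting the two identities gives
\[
\sigma^{(2)}(W,\gamma_W) - \sigma(W) \;=\; \tfrac{1}{2}\bigl(\eta(M) - \eta^{(2)}(M,\gamma_M)\bigr),
\]
and the right-hand side is, by the Cheeger-Gromov definition, precisely $\rho(M,\gamma_M)$. Independence of the chosen metric then follows from the fact that the left-hand side is a topological invariant of $(W,\gamma_W)$ and that the difference of $\eta$ and $\eta^{(2)}$ is a homeomorphism invariant of the cover, matching the metric-independence of $\rho$ in \cite{cheegerg-gromov}.

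The main obstacle is the $L^2$-Atiyah-Patodi-Singer theorem itself: one must verify that the $\Gamma$-equivariant signature operator on the non-compact manifold with boundary $\widetilde W$ is $\Gamma$-Fredholm (i.e.\ its kernel and cokernel are finitely generated Hilbert $\mathcal{N}(G)$-modules after imposing APS boundary conditions), and that the $\Gamma$-trace of the heat kernel on the cylindrical end is integrable so that $\eta^{(2)}(M,\gamma_M)$ is well-defined without any residual-finiteness hypothesis on $G$. Once this analytic input is in place---this is the content of \cite{ram} (and related to \cite{luck-schick})---the rest of the argument is the formal subtraction sketched above.
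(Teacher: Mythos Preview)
The paper does not prove this theorem at all: it is stated as a result of Ramachandran with a citation to \cite{ram} (and a pointer to \cite{luck-schick}), and is used as a black box throughout Section~\ref{J-surgery and analytic invariants}. So there is no ``paper's own proof'' to compare against.

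That said, your sketch is exactly the argument found in the cited references: one proves an $L^{2}$-version of the Atiyah--Patodi--Singer index theorem for the signature operator on the $G$-cover of $W$, observes that the local index density is the same $L$-form as in the untwisted case, and subtracts to obtain the difference of eta invariants, which is by definition $\rho(M,\gamma_M)$. You have correctly identified the genuine analytic content as residing in the $L^{2}$-APS theorem itself (well-definedness and convergence of $\eta^{(2)}$ for arbitrary $G$, $\Gamma$-Fredholmness with APS boundary conditions, and the identification of the $L^{2}$-index with $\sigma^{(2)}(W,\gamma_W)$). One small caution: the full APS formula contains, in addition to $\int_W L$ and the eta term, a boundary contribution from the kernel of the tangential operator (often written $h(M)$), and one must check that this term, or its $L^{2}$-analogue, also cancels in the subtraction; in the signature case this is handled, but it is worth saying explicitly rather than absorbing it into the ``reduced'' eta. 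Sign and normalization conventions for $\rho$ vary in the literature, so you should also pin those down against \cite{cheegerg-gromov} and \cite{ram} before asserting the final identity.
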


This next theorem is a collection of results from and consequences of Chapter 5 of \cite{cot}.  We note that $(2)$ is essentially Atiyah's $L^2$-Index Theorem.
\begin{thrm}
\label{L^2 stuff}
Let $\sigma(W)$ denote the ordinary signature of a compact orientable 4-manifold $W$.  The $L^2$-signature has the following properties:
\begin{enumerate}
\item If $W$ is the boundary of a 5-dimensional manifold (with homomorphism extending $\g_W$) or if $G$ is PTFA and
$$H_2\left(W; \Q[G] \right) \to H_2\left(W, \partial W; \Q[G] \right) $$
is zero then $\sigma^{(2)}(W, \g_W) = 0$.

\item If $W$ is closed or if $\g_W$ is trivial then $\sigma^{(2)}(W, \g_W) = \sigma(W)$.

\item If $(W, \g_W)$ and $(\tilde{W}, \g_{\tilde{W}})$ have the same boundary $M$ and if $\g_W$ agrees with $\g_{\tilde{W}}$ on $M$ then
$$\sigma^{(2)}(W \cup_M \tilde{W}, \g_W \cup \g_{\tilde{W}}) = \sigma^{(2)}(W, \g_W) + \sigma^{(2)}(\tilde{W}, \g_{\tilde{W}}).$$

\item If $i: G \to H$ is a monomorphism then $\sigma^{(2)}(W, \g_W) = \sigma^{(2)}(W, i \circ \g_W)$ and $\rho(M, \g_M) = \rho(M, i \circ \g_M)$.

\item If $(M, \g_M)$ and $(\tilde{M}, \g_{\tilde{M}})$ are closed 3-manifolds then $\rho(M \coprod \tilde{M}, \g_M \coprod \g_{\tilde{M}}) = \rho(M, \g_M) + \rho(\tilde{M}, \g_{\tilde{M}})$. 
\end{enumerate}
\end{thrm}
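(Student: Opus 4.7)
The plan is to treat the five items separately, since they are essentially independent consequences of (a) the trace properties of $\mathcal{N}(G)$ and $\mathcal{U}(G)$, (b) standard bordism techniques applied to the intersection form over $\mathcal{U}(G)$, and (c) Theorem \ref{ram} together with results from Chapter 5 of \cite{cot}. I would keep my proof light and indicate the source for the two deepest inputs (Atiyah's $L^2$-index theorem and the PTFA flatness statement).

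For (1), I would split into the two listed cases. If $W = \partial Y$ with $\g_W$ extending to $\pi_1(Y) \to G$, then standard bordism shows that the image of $H_3(Y, W; \mathcal{U}(G)) \to H_2(W; \mathcal{U}(G))$ is a self-annihilating submodule of half the $\mathcal{U}(G)$-dimension, so its $L^2$-signature vanishes (this is the $L^2$-analogue of the classical vanishing of signature on a boundary). For the PTFA case, vanishing of $H_2(W;\Q[G]) \to H_2(W, \partial W; \Q[G])$ implies, by flatness of $\mathcal{U}(G)$ over $\Q[G]$ when $G$ is PTFA \cite{cot}, that the intersection form $h_{\g_W}$ is zero, so $\sigma^{(2)}(W, \g_W) = 0$. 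For (2), the closed case is Atiyah's $L^2$-index theorem applied to the signature operator, while the case of trivial $\g_W$ reduces $tr_G$ to the complex trace and identifies $p_{\pm}(h_{\g_W})$ with the ordinary positive/negative projections of the standard intersection form.

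For (3), I would use Novikov additivity for the $L^2$-signature: gluing along $M$ induces an orthogonal splitting of the intersection form on $H_2(W \cup_M \tilde W; \mathcal{U}(G))$ modulo a form on the image of $H_2(M;\mathcal{U}(G))$ that is degenerate; since $tr_G$ is additive and invariant under the relevant unitary conjugations, the spectral projections of the total form split additively into those of the pieces. For (4), the monomorphism $i: G \to H$ induces an isometric, trace-preserving inclusion $\mathcal{N}(G) \hookrightarrow \mathcal{N}(H)$ (by the formula $tr_H(i(f)) = tr_G(f)$), which extends to affiliated operators; the functional calculus is compatible with this inclusion, so $p_{\pm}(i \circ h_{\g_W}) = i(p_{\pm}(h_{\g_W}))$ and the signatures agree. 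The same trace-preservation argument gives the $\rho$ statement by Theorem \ref{ram} after picking an extension $W$ of $M$. Finally (5) is immediate from $p_{\pm}$ of a block-diagonal operator being block-diagonal and $tr_G$ being additive across blocks, or from invoking Theorem \ref{ram} on the disjoint union of extensions.

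The main obstacle is the PTFA half of (1): one must know that $\mathcal{U}(G)$ is flat over $\Q[G]$ when $G$ is PTFA, so that vanishing of the $\Q[G]$-intersection map propagates to $\mathcal{U}(G)$-coefficients and forces $h_{\g_W} = 0$. This is a non-trivial input, and I would invoke it from \cite{cot} rather than reprove it. Once that is in place, the only other genuinely analytic ingredient is Atiyah's $L^2$-index theorem used in (2), and all of (3)--(5) follow from elementary algebraic properties of the von Neumann trace.
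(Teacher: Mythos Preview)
The paper does not actually prove this theorem: it is stated as ``a collection of results from and consequences of Chapter 5 of \cite{cot},'' with the remark that (2) is essentially Atiyah's $L^2$-Index Theorem, and no proof is given. Your sketches are correct and line up with the standard arguments one finds in \cite{cot}, so in effect you have supplied more than the paper does.

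One small wrinkle worth tightening: in your argument for the $\rho$-part of (4) you invoke Theorem \ref{ram} ``after picking an extension $W$ of $M$.'' While every closed oriented $3$-manifold bounds, the coefficient system $\g_M:\pi_1(M)\to G$ need not extend over $\pi_1(W)$ with values in $G$ itself, so Theorem \ref{ram} is not directly available. The cleanest fix is to observe that the Cheeger--Gromov $\rho$-invariant depends only on the regular cover of $M$ determined by $\ker\g_M$; since $i$ is injective, $\ker(i\circ\g_M)=\ker\g_M$ and the covers coincide, giving $\rho(M,\g_M)=\rho(M,i\circ\g_M)$ immediately. Alternatively, one can enlarge the target group to force an extension and then apply the $\sigma^{(2)}$-part of (4) twice, which is essentially how \cite{cot} handles it.
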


\section{$J$-surgery and analytic invariants}
\label{J-surgery and analytic invariants}

\subsection{$J$-surgery}{}

Let $K$ be a knot in $M$ and suppose that $K$ is concordant to $J$ via a concordance $C$.  We assume that $K$ and $J$ are based and have the canonical framing.  Then
$$\partial \EC = \EK \cup T^2 \times I \cup -\EJ$$
where $T^2 \times \{0\}$ is identified with $\partial \EJ$, $T^2 \times \{1\}$ is identified with $\partial \EK$, and $-\EJ$ denotes $\EJ$ with the reverse orientation. 

\begin{defn}
\label{J surgery}
For any knot $K$ define {\em $J$-surgery on $K$} to be the closed 3-manifold
$$M(K) := \EK \cup -\EJ,$$
where $\mK \sim \mJ^{-1}$ and $\laK \sim \laJ$.  If $K$ is concordant to $J$ then $M(K)$ is homeomorphic to the boundary of the concordance.  
\end{defn}

\begin{defn}
\label{P = M(K)}
We denote $\pi_1\left( M(K) \right )$ by $P$ and define an epimorphism $\g_P: P \to G \ast_{\Z} G$ or $G$ as follows:  $P$ is a pushout and admits unique epimorphisms to $G \ast_{\Z} G$ and $G$ making the following diagram commute,
$$
\xymatrix{
\Z^2 \ar@{^(->}[r] \ar@{^(->}[d] & \PJ \ar[d] \ar@/^/[ddr]^{\g_J} \ar@/^2pc/[dddrr]^{\g_J} \\
\PK \ar[r] \ar@/_/[drr]_{\g_K} \ar@/_2pc/[ddrrr]_{\g_K} & P \ar@{..>}[dr] & & \\
& & G \ast_{\Z} G \ar@{..>}[dr] \\
& & & G
}
$$
where the homomorphism $\gK: \PK \to G \ast_{\Z} G$ (or $\gJ: \PJ \to G \ast_{\Z} G$) maps to the first (respectively second) copy of $G$, and the epimorphism $\gK: \PK \to G$ (or $\gJ: \PJ \to G$) is induced by the inclusion of $\EK$ (respectively $\EJ$) into $M$.  We denote both $P \to G \ast_{\Z} G$ and $P \to G$ by $\g_P$.  
\end{defn}

\begin{defn}
\label{rho_n}
Let $\g_P^n: P \to \dfrac{P}{\G^{(n)} \g_P}$ be the canonical epimorphism and define
$$ \rho_n \left( M(K) \right) := \rho \left( M(K), \g_P^n \right). $$ 
\end{defn}

\begin{defn}
\label{def of N}
For a concordance $C \cong S^1 \times I$ between two knots $K$ and $L$ let
$$N = \EC \cup_{T^2 \times I} -\left( \EJ \times I \right) ,$$
where $\partial \EJ \times I \cong T^2 \times I$ is induced by the canonical framing of $\partial \EJ$, $\partial \EC \cong T^2 \times I$ has the framing induced from $\partial \EK$ (which agrees with that induced by $\partial E_L$), and $\EC$ is glued to $\EJ \times I$ by $\partial \EJ \times I \cong T^2 \times I \cong \partial \EC$. 
\end{defn}

\begin{remark}
\label{coeffs for N}
$N$ is a compact 4-manifold and
$$\partial N = M(K) \coprod -M(L) .$$
If $\pi_N = \pi_1(N)$ then $\pi_N$ is a pushout such that the following diagram commutes:
$$
\xymatrix{
\Z^2 \ar[rrr] \ar[ddd] & & & \PJ \ar[ddd] \ar@/^/[ddddr] \ar@/^2pc/[dddddrr] \\
& \Z^2 \ar[ul]|{\cong} \ar@{^(->}[r] \ar@{^(->}[d] & \PJ \ar[d] \ar[ur]|{\cong} \\
& \PK \ar[r] \ar[dl] & P \ar@{..>}[dr] & & \\
\PC \ar[rrr] \ar@/_/[rrrrd] \ar@/_2pc/[rrrrrdd] & & & \pi_N \ar@{..>}[dr] \\
& & & & G \ast_{\Z} G \ar@{..>}[dr] \\
& & & & & G
}
$$
In this diagram $P = \pi_1\left( M(K) \right)$ as before, $\Z^2 \to \Z^2$ is the isomorphism of fundamental groups induced by $T^2 \hookrightarrow T^2 \times \{1\} \subset T^2 \times I$, $\PK \to \PC$ is induced by the inclusion of $\EK$ into $\EC$, and $\PJ \to \PJ$ is induced by $\EJ \hookrightarrow \EJ \times \{1\} \subset \EJ \times I$.  The dotted arrow from $P$ to $\pi_N$ may not be surjective but the composition of dotted arrows from $P$ to $G \ast_{\Z} G$ and $G$ are epimorphisms, as noted in Definition \ref{P = M(K)}.  In this way we have both $G$ and $G \ast_{\Z} G$-coefficient systems for $T^2$, $T^2 \times I$, $\EJ$, $\EJ \times I$, $\EK$, $\EC$, $M(K)$, and $N$.  We obtain a $G \ast_{\Z} G$-coefficient system for $M$ via the inclusion of $G$ into one of the two copies of itself in $G \ast_{\Z} G$, depending on the context.  We denote both epimorphisms $\pi_N \to G \ast_{\Z} G$ and $\pi_N \to G$ by $\g_N$. 
\end{remark}  
 
\begin{thrm}
\label{rho_n conc invt}
If $K$ and $L$ are concordant knots and $G$ is PTFA then $\rho_n \left( M(K) \right) = \rho_n \left( M(L) \right)$ for all $n$, regarded as spaces over $G$ or $G \ast_{\Z} G$.
\end{thrm}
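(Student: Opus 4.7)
The plan is to apply Ramachandran's theorem (Theorem \ref{ram}) to the compact 4-manifold $N$ of Definition \ref{def of N}, whose boundary is $M(K) \sqcup -M(L)$. Write $\Lambda_n := \pi_N/\G^{(n)}\g_N$ and $\g_N^n : \pi_N \to \Lambda_n$ for the canonical quotient, where $\g_N$ is the coefficient system of Remark \ref{coeffs for N}.

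I would first verify that the inclusions $P \hookrightarrow \pi_N$ and $Q \hookrightarrow \pi_N$ are morphisms in $\Omega^G$ (Definition \ref{Omega}). Finite presentation and finite normal generation of the kernels (each normally generated by a single meridian) are straightforward, and the normal-surjection condition (3) is immediate from the pushout description of $\pi_N$ in Remark \ref{coeffs for N}. For condition (4), Proposition \ref{conc is hmlgy conc} provides a $\Z[G]$-homology isomorphism $\EK \hookrightarrow \EC$, the inclusion $\EJ \hookrightarrow \EJ \times I$ is a homotopy equivalence, and a Mayer--Vietoris comparison of $M(K) = \EK \cup -\EJ$ with $N = \EC \cup -(\EJ \times I)$ shows that $M(K) \hookrightarrow N$ is a $\Z[G]$-homology isomorphism; asphericity of $M(K)$ (via Proposition \ref{basics of \EK}, incompressibility of the boundary torus, and van Kampen) translates this to the required conditions on group homology. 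Lemma \ref{quotient monomorphisms} then produces monomorphisms $P/\G^{(n)}\g_P \hookrightarrow \Lambda_n$ and $Q/\G^{(n)}\g_Q \hookrightarrow \Lambda_n$, so by Theorem \ref{L^2 stuff}(4) we have $\rho_n(M(K)) = \rho(M(K), \g_N^n|_{M(K)})$ and likewise for $L$.

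Combining Ramachandran's theorem with additivity (Theorem \ref{L^2 stuff}(5)) applied to $\partial N = M(K) \sqcup -M(L)$ yields
\[
\rho_n(M(K)) - \rho_n(M(L)) = \sigma^{(2)}(N, \g_N^n) - \sigma(N).
\]
Both signatures vanish by Theorem \ref{L^2 stuff}(1): Lemma \ref{loc is PTFA} gives the PTFA hypothesis for $\Lambda_n$, and the required vanishing of $H_2(N;\Q[\Lambda_n]) \to H_2(N,\partial N;\Q[\Lambda_n])$ reduces, via the long exact sequence of the pair, to showing that $H_2(M(K);\Q[\Lambda_n]) \to H_2(N;\Q[\Lambda_n])$ is surjective. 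The parallel argument with trivial coefficients disposes of $\sigma(N)$.

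The principal obstacle is establishing this last surjectivity with $\Q[\Lambda_n]$-coefficients, since Proposition \ref{conc is hmlgy conc} only gives the statement with $\Z[G]$-coefficients. I would handle it by passing to the $\Lambda_n$-cover $\tilde N$ of $N$, using the monomorphism $P/\G^{(n)}\g_P \hookrightarrow \Lambda_n$ to identify the preimage of $M(K)$ in $\tilde N$ as a disjoint union of $P/\G^{(n)}\g_P$-covers of $M(K)$, and running the Mayer--Vietoris sequence for $\tilde N = \widetilde{\EC} \cup \widetilde{\EJ \times I}$. The delicate point is showing that the inclusion $\EK \hookrightarrow \EC$ induces an isomorphism on the relevant intermediate covers; this is verified by checking that the pulled-back coefficient system behaves compatibly across the gluing torus, so that the $\Z[G]$-statement of Proposition \ref{conc is hmlgy conc} propagates up to $\Lambda_n$-coefficients.
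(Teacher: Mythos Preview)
Your overall architecture matches the paper's: build $N$, show the two boundary inclusions lie in $\Omega^{G'}$, invoke Lemma~\ref{quotient monomorphisms} and Theorem~\ref{L^2 stuff}(4)--(5) together with Ramachandran to reduce to $\sigma^{(2)}(N,\g_N^n)=\sigma(N)=0$.  The divergence is entirely in how you discharge the vanishing of $\sigma^{(2)}(N,\g_N^n)$.

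Your proposed route there has a genuine gap.  You want surjectivity of $H_2(M(K);\Q[\Lambda_n])\to H_2(N;\Q[\Lambda_n])$ and propose to obtain it by ``propagating the $\Z[G]$-statement of Proposition~\ref{conc is hmlgy conc} up to $\Lambda_n$-coefficients'' via a covering-space Mayer--Vietoris.  But $\Lambda_n$ surjects onto $G$, so $\Z[\Lambda_n]$-coefficients are \emph{finer} than $\Z[G]$-coefficients; a $\Z[G]$-homology equivalence need not be a $\Z[\Lambda_n]$-homology equivalence, and nothing in your sketch bridges that.  The ``compatibility across the gluing torus'' you allude to does not by itself manufacture the missing acyclicity at the intermediate cover.

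The paper avoids this entirely.  It observes (from the very same Mayer--Vietoris diagram, with $G'=\{0\}$) that $M(K)\hookrightarrow N$ is an \emph{integral} homology equivalence, i.e.\ $H_*(N,M(K);\Z)=0$.  Since $\Lambda_n$ is PTFA (Lemma~\ref{loc is PTFA}), Proposition~2.10 of \cite{cot} gives $H_*(N,M(K);\mathcal{K}(\Lambda_n))=0$, and flatness of $\mathcal{U}(\Lambda_n)$ over $\mathcal{K}(\Lambda_n)$ then yields $H_*(N,M(K);\mathcal{U}(\Lambda_n))=0$, so the intersection form on $H_2(N;\mathcal{U}(\Lambda_n))$ vanishes and $\sigma^{(2)}(N,\g_N^n)=0$.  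The point is that the standard PTFA machinery promotes $\Z$-acyclicity (not $\Z[G]$-acyclicity) to $\mathcal{K}$-acyclicity; once you notice that the integral statement is already available for free, there is nothing delicate left to do.
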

\begin{proof}
Let $C$ be a concordance from $K$ to $L$ and let $P_K = \pi_1\left( M(K) \right)$ and $P_L = \pi_1\left( M(L) \right)$.  Take $N$ as in Definition \ref{def of N} and let $\g_N^n: \pi_N \to \dfrac{\pi_N}{\G^{(n)} \g_N}$ be the canonical epimorphism.  Our goal is to conclude that
$$\rho_n \left( M(K) \right) - \rho_n \left( M(L) \right) = \sigma^{(2)}(N, \g_N^n) - \sigma(N) = 0.$$ 
By Theorem \ref{ram}, (4) and (5) of Theorem \ref{L^2 stuff}, and Lemma \ref{quotient monomorphisms}, to get the first equality it is enough to show that $(j_K)_*$ and $(j_L)_*$, the homomorphisms induced by the inclusions of $M(K)$ and $M(L)$ into $N$, respectively, are in $\Omega^{G'}$ for $G' = G$, $G \ast_{\Z} G$.  We do this now: 

First note that $M(K)$, $M(L)$, and $N$ are finite CW-complexes, hence their fundamental groups are finitely presented.  Also, $\G \g_{P_K}$, $\G \g_{P_L}$, and $\G \g_N$ are finitely normally generated by Lemma \ref{kernels are finitely normally generated} because $\g_N$ is surjective, $\g_{P_K}$ and $\g_{P_L}$ are surjective when $G' = G$, and when $G' = G \ast_{\Z} G$ the images of $\g_{P_K}$ and $\g_{P_L}$ are the two copies of $G$ in the amalgamation.  By the commutativity of the diagram in Remark \ref{coeffs for N}, to see that $(j_K)_*$ induces a normal surjection $\G \g_{P_K} \to \G \g_N$ it is enough to show that the homomorphism induced by the inclusion $\iota: \EK \to \EC$ induces a normal surjection $\G \g_K \to \G \g_C$.  This however follows from the fact that these groups are normally generated by $\mK$ and $\iota_*(\mK)$, respectively.  Similarly, $(j_L)_*$ induces a normal surjection $\G \g_{P_L} \to \G \g_N$.  We still need to show that $(j_K)_*$ and $(j_L)_*$ induce isomorphisms on $H_1\left(- ;\Z[G'] \right)$ and epimorphisms on $H_2\left(- ;\Z[G'] \right)$ for $G'$ equal to $G \ast_{\Z} G$ and $G$.  Mayer-Vietoris sequence arguments shows that the inclusions $\EK \hookrightarrow \EC$ and $E_L \hookrightarrow \EC$ induce isomorphisms on $H_i\left(- ;\Z[G\ast_{\Z} G] \right)$, $H_i\left(- ;\Z[G] \right)$, and $H_i(- ; \Z)$ for all $i \geq 0$.  Applying the Five Lemma to the following commutative diagram of Mayer-Vietoris sequences 
$$
\xymatrix{
H_i\left( T^2; \Z[G'] \right) \ar[r] \ar[d]^{\cong} & \txt{$H_i\left( \EK; \Z[G'] \right)$\\ $\oplus$\\ $H_i\left( \EJ; \Z[G'] \right)$} \ar[r] \ar[d]^{\cong} & H_i\left( M(K); \Z[G'] \right) \ar[d] \\
H_i\left( T^2 \times I; \Z[G'] \right) \ar[r] & \txt{$H_i\left( \EC; \Z[G'] \right)$\\ $\oplus$\\ $H_i\left( \EJ \times I; \Z[G'] \right)$} \ar[r] & H_i\left( N; \Z[G'] \right) 
}
$$
$$
\xymatrix{
& \ar[r] & H_{i-1}\left( T^2; \Z[G'] \right) \ar[r] \ar[d]^{\cong} & \txt{$H_{i-1}\left( \EK; \Z[G'] \right)$\\ $\oplus$\\ $H_{i-1}\left( \EJ; \Z[G'] \right)$} \ar[d]^{\cong} \\
& \ar[r] & H_{i-1}\left( T^2 \times I; \Z[G'] \right) \ar[r] & \txt{$H_{i-1}\left( \EC; \Z[G'] \right)$\\ $\oplus$\\ $H_{i-1}\left( \EJ \times I; \Z[G'] \right)$} 
}
$$ 
where $G'$ is $G \ast_{\Z} G$, $G$, or $\{0\}$, we see that the inclusion $j_K: M(K) \hookrightarrow N$ induces an isomorphism on $H_i\left(- ;\Z[G'] \right)$ for all $i \geq 0$.  This similarly holds for $j_L: M(L) \hookrightarrow N$.  Note that for $G' = \{0\}$ this implies $\sigma(N) = 0$.  Thus, we have shown that $(j_K)_*$ and $(j_L)_*$ are in $\Omega^{G'}$ for $G' = G$, $G \ast_{\Z} G$. 

For the second equality, that $\sigma^{(2)}(N, \g_N^n) - \sigma(N) = 0$, we observed at the end of the previous paragraph that $\sigma(N) = 0$.  We therefore only need to show that $\sigma^{(2)}(N, \g_N^n) = 0$.  As noted above, $H_i \left(N, M(K); \Z \right) = 0$
for all $i \geq 0$.  Also, $\dfrac{\pi_N}{\G^{(n)} \g_N}$ is PTFA by Lemma \ref{loc is PTFA}.  It follows from \cite{cot} (Proposition 2.10) that 
$$H_i \left( N, M(K); \mathcal{K} \left( \dfrac{\pi_N}{\G^{(n)} \g_N} \right) \right) = 0$$
for all $i \geq 0$, where $\mathcal{K} \left( \dfrac{\pi_N}{\G^{(n)} \g_N} \right)$ is the classical right ring of quotients of $\Z \left[ \dfrac{\pi_N}{\G^{(n)} \g_N} \right]$ with respect to all non-zero elements.  As $\mathcal{U} \left( \dfrac{\pi_N}{\G^{(n)} \g_N} \right)$ is a flat $\mathcal{K} \left( \dfrac{\pi_N}{\G^{(n)} \g_N} \right)$-module, 
$$H_i \left( N, M(K); \mathcal{U} \left( \dfrac{\pi_N}{\G^{(n)} \g_N} \right) \right) = 0.$$
Hence, $\sigma^{(2)}(N, \g_N^n) = 0$. 
\end{proof}

\subsection{Constructing and distinguishing characteristic knots}

We now construct examples of $J$-characteristic knots that are distinguished, up to concordance, by their $\rho$-invariants.  To achieve this we follow the general construction of Harvey in \cite{harvey2}, replacing the Harvey series with the rational $G$-local derived series.  We assume as always that the manifold $M$ has PTFA fundamental group.  We continue to work in the smooth category. \\

Let $K$ be a knot in $M$, let $\eta \subset \EK$ be an embedded curve that bounds an embedded disk in $M$, and let $N(\eta)$ be a regular neighborhood of $\eta$ in the interior of $\EK$.  Let $L$ be a knot in $S^3$.  Then 
$$\left( M - N(\eta) \right) \cup -\left( S^3 - N(L) \right), $$
where $\mu_L \sim \lambda_{\eta}^{-1}$ and $\lambda_L \sim \mu_{\eta}$, is diffeomorphic to $M$ via a diffeomorphism that is the identity outside a regular neighborhood of the disk bounded by $\eta$.  In particular, this diffeomorphism induces the identity homomorphism on $G$.  

\begin{defn}
Define $K(\eta, L)$ to be the image of $K$ in $\left( M - N(\eta) \right) \cup -\left( S^3 - N(L) \right) \cong M$.  That is, the image of $K$ under the inclusion
$$K \hookrightarrow M-N(\eta) \hookrightarrow \left( M - N(\eta) \right) \cup -\left( S^3 - N(L) \right).$$
Denote $J$-surgery on $K(\eta, L)$ by $M(K, \eta, L)$, so, as in Definition \ref{J surgery}, 
$$M(K, \eta, L) = E_{K(\eta, L)} \cup_{T^2} -\EJ .$$  
\end{defn}

\begin{defn}
Define $Q = \pi_1(M(K, \eta, L))$ and let $\g_Q: Q \rightarrow G$ be the epimorphism to $G \ast_{\Z} G$ or $G$, as in Definition \ref{P = M(K)}.
\end{defn}

\begin{remark} 
\label{sat const}$\phantom{}$

\begin{enumerate}
\item[(i)] The knot $K(\eta, L)$ is obtained in $M$ by grasping all the strands of $K$ that pass through the disk bounded by $\eta$ and tying them in the (untwisted) knot $L$, and is therefore homotopic to $K$.  
\item[(ii)] The exterior of $K(\eta, L)$ is $E_{K(\eta, L)} = \left( \EK - N(\eta) \right) \cup -\left( S^3 - N(L) \right)$. 
\end{enumerate}
\end{remark}

\begin{defn}
For future use we define 
$$Z_L := \left(S^3 - N(L) \right) \cup -ST$$ 
to be the usual 0-surgery on $L$ in $S^3$. 
\end{defn}

Following Harvey in \cite{harvey2}, construct a cobordism $C$ between $M(K)$ and $M(K, \eta, L)$ as follows:  let $W$ be a compact, oriented 4-manifold with $\partial W = Z_L$, $\sigma(W) = 0$, and $\pi_1(W) = \Z$ generated by $\iota_*(\mu_L)$, where $\iota: Z_L \hookrightarrow W$ is inclusion of the boundary.  Note that $H_2 \left( \pi_1(W) \right) = H_2(\Z) = 0$, so $H_2(W) = \text{image}\{\pi_2(W) \rightarrow H_2(W)\}$ by Hopf's Theorem.  

\begin{defn} 
\label{C} 
Define $C$ by 
$$C := \left(M(K)\times I \right) \cup -W, $$ 
where $N(L) \subset W$ is glued to $N(\eta) \times \{1\} \subset M(K) \times \{1\}$ by $\lambda_L \sim \mu_{\eta}\times \{1\}$ and $\mu_L \sim \lambda_{\eta}^{-1}\times \{1\}$.  
\end{defn}

\begin{remark}
\label{remark C}
We constructed $C$ so that
$$\partial C = -M(K) \coprod M(K, \eta, L), $$
making $C$ a cobordism between $M(K)$ and $M(K, \eta, L)$.  The group $\pi_1(C)$ is a pushout and there are uniquely defined epimorphisms from $\pi_1(C)$ to $G \ast_{\Z} G$ and $G$ satisfying the following diagram, 
$$
\xymatrix{
\Z \ar[r] \ar[d] & \Z \ar[d] \ar@/^/[ddr]^0 \ar@/^2pc/[dddrr]^0 \\
P \ar[r] \ar@/_/[drr]_{\g_P} \ar@/_2pc/[ddrrr]_{\g_P} & \pi_1(C) \ar@{..>}[dr]\\
& & G \ast_{\Z} G \ar@{..>}[dr] \\
& & & G
}
$$ 
where $P = \pi_1\left( M(K) \right) = \pi_1\left( M(K) \times I \right)$, the $\Z$ on the upper-left is $\pi_1\left( N(\eta) \right)$, and the $\Z$ on the upper-right is $\pi_1(W)$.  Note that the homomorphisms induced by the inclusions $\partial N(\eta) \hookrightarrow N(\eta)$ and $Z_L \hookrightarrow W$ induce the trivial $G$ and $G \ast_{\Z} G$-coefficient systems on $\partial N(\eta)$ and $Z_L$, respectively. 
\end{remark}

\begin{defn}
Define $E = \pi_1(C)$ and define $\g_E: E \to G \ast_{\Z} G$ or $G$ to be the epimorphisms in Remark \ref{remark C}.
\end{defn}

\begin{defn}
\label{i and j}
Define $i$ and $j$ to be the inclusions of $M(K)$ and $M(K, \eta, L)$ into $\partial C$, respectively. 
\end{defn}

\begin{remark}
\label{i, j over M}
The homomorphisms induced by $i$ and $j$ on fundamental groups, respectively $i_*: P = \pi_1\left( M(K) \right) \to E$ and $j_*: Q = \pi_1 \left( M(K, \eta, L) \right) \to E$, make the following diagram commute:
$$
\xymatrix{
P \ar[r] ^{i_*} \ar[dr]_{\g_P} \ar@/_/[ddr]_{\g_P} & E \ar[d]|{\g_E} & Q \ar[l]_{j_*} \ar[dl]^{\g_Q} \ar@/^/[ddl]^{\g_Q} \\
& G \ast_{\Z} G \ar[d] \\
& G
}
$$
\end{remark}

\begin{remark}
\label{H-results}
Harvey showed in \cite{harvey2} that 
\begin{enumerate}
\item $i_*: \pi_1(M(K)) \rightarrow \pi_1(C)$ is an isomorphism (Lemma 5.4),
\item $j_*:  \pi_1(M(K, \eta, L)) \rightarrow \pi_1(C)$ is an epimorphism (Lemma 5.5), and
\item $j_*: \pi_1(M(K, \eta, L)) \rightarrow \pi_1(C)$ induces an isomorphism on $H_1( - ; \Z)$ and an epimorphism on $H_2( - ; \Z)$ (Lemma 5.6). 
\end{enumerate}
\end{remark}

\begin{lem}
\label{H_1 iso and H_2 epi}
The epimorphism $j_*: \pi_1(M(K, \eta, L)) \rightarrow \pi_1(C)$ induces an isomorphism on $H_1( - ; \Z[G'])$ and an epimorphism on $H_2( - ; \Z[G'])$ for $G' = G$ and $G \ast_{\Z} G$.
\end{lem}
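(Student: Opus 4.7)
The strategy is to reduce to Harvey's $\Z$-coefficient version (Lemma 5.6 of \cite{harvey2}) by exploiting that the coefficient system $\g_E: E \to G'$ is trivial on several key subspaces of $C$. Since $\eta$ bounds an embedded disk in $M$, it is null-homotopic there, so $\gK(\eta) = e$ in $G$, and through the pushout defining $\g_P$ in Definition \ref{P = M(K)}, $\g_P(\eta) = e$ in $G \ast_{\Z} G$ as well. Via the gluing relations $\lambda_L \sim \mu_\eta$ and $\mu_L \sim \lambda_\eta^{-1}$ from Definition \ref{C}, the induced coefficient systems on $\pi_1(W) \cong \Z\langle \mu_L \rangle$, on $\pi_1(V) \cong \Z\langle \eta \rangle$ for the glued solid torus $V := N(\eta)\times\{1\} = N(L)$, on $\pi_1(\partial V) \cong \Z^2$, and on the knot group $\pi_1(S^3 - N(L)^\circ)$ (which is normally generated by $\mu_L$) are all trivial.

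I would then write down and compare the Mayer-Vietoris sequences with $\Z[G']$-coefficients for
$$C = (M(K)\times I) \cup_V -W \quad \text{and} \quad M(K,\eta,L) = \left(M(K) - N(\eta)^\circ\right) \cup_{\partial V} -\left(S^3 - N(L)^\circ\right),$$
together with the ladder between them induced by $j$. For the four pieces $V$, $W$, $\partial V$, and $S^3 - N(L)^\circ$, coefficient triviality gives $H_k(-;\Z[G']) \cong H_k(-;\Z) \otimes_{\Z} \Z[G']$, reducing these terms to Harvey's $\Z$-coefficient calculation tensored up over $\Z[G']$. The remaining vertical map $H_k\left(M(K) - N(\eta)^\circ; \Z[G']\right) \to H_k\left(M(K); \Z[G']\right)$ I would analyze via the long exact sequence of the pair $\left(M(K), M(K) - N(\eta)^\circ\right)$, which by excision has relative homology $H_*(V, \partial V; \Z[G']) \cong H_*(V, \partial V; \Z) \otimes \Z[G']$ by the same triviality. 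A Five Lemma argument on the ladder in degrees $k = 1, 2$, with input from Harvey's Lemma 5.6 for the trivial-coefficient pieces, then yields the desired $H_1$-isomorphism and $H_2$-epimorphism.

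The main obstacle is carefully tracking the boundary maps across the ladder of Mayer-Vietoris sequences, especially because the decomposition of $C$ is along a $3$-dimensional solid torus $V$ (so some care is needed to check that the boundary identification $\lambda_L \sim \mu_\eta$, $\mu_L \sim \lambda_\eta^{-1}$ really extends from $\partial V$ to a homeomorphism of solid tori; this holds because $\lambda_L$ is the meridian of the $0$-surgery solid torus $N(L)\subset\partial W$, while $\mu_\eta$ is the meridian of $N(\eta)$, so the boundary map matches meridians to meridians). Given this bookkeeping, the $\Z[G']$-coefficient version follows essentially formally from Harvey's $\Z$-coefficient computation.
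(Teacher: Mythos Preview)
Your observation that the coefficient system is trivial on $V$, $\partial V$, $W$, and $S^3 - N(L)^\circ$ is correct and useful, but the ``Five Lemma argument'' does not go through. For $H_1$-injectivity, the vertical map $H_1(\partial V;\Z[G']) \to H_1(V;\Z[G'])$ in your ladder has kernel $\Z[G']\cdot[\mu_\eta]$, so the Four Lemma fails. The paper instead does a direct chase: any $x \in \ker j_*$ lifts along $q_*: H_1(M(K)-N(\eta);\Z[G']) \to H_1(M(K,\eta,L);\Z[G'])$ (which your Mayer--Vietoris sequence does show is onto) to some $\tilde x$ lying in $\ker p_*$; since $\ker\bigl(p_*: \pi_1(M(K)-N(\eta)) \to P\bigr)$ is normally generated by $\mu_\eta$, and $q_*(\mu_\eta) = \lambda_L$ vanishes in $H_1(M(K,\eta,L);\Z[G'])$, one concludes $x = 0$. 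That last observation is the specific topological input your sketch omits.

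For $H_2$ there is a more serious gap: the space-level map is not surjective in general. The Mayer--Vietoris sequence for $C$ shows that $H_2(W)\otimes_\Z\Z[G']$ injects into $H_2(C;\Z[G'])$, and since $H_2(S^3 - N(L);\Z[G']) = 0$, nothing in your ladder hits these classes when $H_2(W) \neq 0$. The paper takes a different route and shows the \emph{target} $H_2(E;\Z[G'])$ vanishes. From the decomposition $M(K) = E_K \cup_{T^2} -E_J$ and Proposition~\ref{basics of \EK} one gets $H_2(M(K);\Z[G']) = 0$; then, since $i_*: P \to E$ is an isomorphism and $H_2(M(K);\Z[G'])$ surjects onto $H_2(P;\Z[G'])$ by Hopf, it follows that $H_2(E;\Z[G']) = 0$. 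This twisted vanishing for $E_K$ and $E_J$ cannot be obtained by tensoring Harvey's $\Z$-coefficient argument with $\Z[G']$, because the coefficient system on those pieces is nontrivial.
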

\begin{proof} 
Recall that $E = \pi_1(C)$, $P = \pi_1\left( M(K) \right)$, and $Q = \pi_1\left( M(K, \eta, L) \right)$.  Let $R = \pi_1\left( M(K) - N(\eta) \right)$ and let $p: M(K) - N(\eta) \hookrightarrow M(K)$ and $q: M(K) - N(\eta) \hookrightarrow M(K, \eta, L)$ be the inclusions.  We first show that $j_*$ induces an isomorphism on $H_1( - ; \Z[G'])$, and we note that this part of the proof essentially follows the proof of Lemma 5.6 of \cite{harvey2}.  As was noted in Remarks \ref{H-results} and \ref{i, j over M}, $j$ induces an epimorphism on fundamental groups such that $\g_E \circ j_* = \g_Q$.  Hence
$$\xymatrix{ H_1\left( Q; \Z[G'] \right) \cong \dfrac{\G \g_Q}{\G_2 \g_Q} \ar[r]^{j_*} & \dfrac{\G \g_E}{\G_2 \g_E} \cong H_1\left( E; \Z[G'] \right) }$$
is an epimorphism.  To see that $j_*$ is injective, consider the following commutative diagram:
$$
\xymatrix{
H_1\left( M(K) - N(\eta); \Z[G'] \right) \ar[r]^(.575){p_*} \ar[d]_{q_*} & H_1\left( M(K); \Z[G'] \right) \ar[d]^{i_*}_{\cong} \\
H_1\left( M(K, \eta, L); \Z[G'] \right) \ar[r]^(.6){j_*} & H_1\left( C; \Z[G'] \right)
}
$$ 
In the Mayer-Vietoris sequence
$$\xymatrix{ 
H_1\left( \partial N(\eta); \Z[G'] \right) \ar[r] & \txt{$H_1\left( M(K) - N(\eta); \Z[G'] \right)$ \\ $\oplus$ \\ $H_1\left( S^3 - N(L); \Z[G'] \right)$} \ar[r] & H_1\left( M(K, \eta, L); \Z[G'] \right) \\
\txt{$\phantom{MMMM}$} \ar[r] & H_0\left( \partial N(\eta); \Z[G'] \right) \ar[r] & \txt{$H_0\left( M(K) - N(\eta); \Z[G'] \right)$ \\ $\oplus$ \\ $H_0\left( S^3 - N(L); \Z[G'] \right)$} 
}
$$
the homomorphism $H_i\left( \partial N(\eta); \Z[G'] \right) \to H_i\left( S^3 - N(L); \Z[G'] \right)$ is an epimorphism for $i=1$ and an isomorphism for $i=0$.  Hence, $q_*$ is surjective.  Also, $p_*$ in the above diagram is surjective because $p$ induces an epimorphism on fundamental groups over $G'$.  Suppose that $x \in H_1\left( M(K, \eta, L); \Z[G'] \right)$ and that $j_*(x) = e$.  There is an $\tilde{x} \in H_1\left( M(K) - N(\eta); \Z[G'] \right)$ such that $q_*(\tilde{x}) = x$.  As $i_* \circ p_*(\tilde{x}) = j_* \circ q_*(\tilde{x}) = e$ and $i_*$ is an isomorphism, $\tilde{x}$ is in the kernel of $p_*$.  The kernel of $p_*: R \to P$ is normally generated by $\mu_{\eta}$, and $\mu_{\eta} \in \G \g_R$.  It follows that $\tilde{x}$ is a product of conjugates of $\mu_{\eta}$.  Since $q_*(\mu_{\eta}) = \lambda_L = e$ in $H_1\left( M(K, \eta, L); \Z[G'] \right)$, $x$ is trivial.  It follows that $j_*$ induces an isomorphism on $H_1( - ; \Z[G'])$. 

We still need to show that $j_*$ induces an epimorphism on $H_2( - ; \Z[G'])$.  We have 
$$\xymatrix{ H_2(M(K); \Z[G']) \ar[r] & H_2(P; \Z[G']) \ar[r]^{i_*} & H_2(E; \Z[G']) }$$ 
where the first homomorphism is surjective by Hopf's Theorem and the second is an isomorphism by Remarks \ref{H-results} and \ref{i, j over M}.  We will show that $H_2(M(K); \Z[G']) = 0$, implying that $H_2(E; \Z[G']) = 0$ and, hence, that $j_*$ induces an epimorphism on $H_2(-; \Z[G'])$.  Consider the following Mayer-Vietoris sequence associated to $M(K)$: 
$$\xymatrix{ \txt{$H_2(\EK; \Z[G'])$ \\ $\oplus$ \\ $H_2(\EJ; \Z[G'])$ } \ar[r] & H_2(M(K); \Z[G']) \ar[r] & H_1(T^2; \Z[G']) \ar[r] & \txt{$H_1(\EK; \Z[G'])$ \\ $\oplus$ \\ $H_1(\EJ; \Z[G'])$ }. }$$ 
When $G' = G$ it follows from Proposition \ref{basics of \EK} that $H_2(\EK; \Z[G]) = H_2(\EJ; \Z[G]) = 0$ and $H_1(T^2; \Z[G]) \rightarrow H_1(\EK; \Z[G]) \oplus H_1(\EJ; \Z[G])$ is injective.  If $G' = G \ast_{\Z} G$ then the coefficient systems on $\EK$ and $\EJ$ have the left and right copies of $G$ in the amalgamation $G \ast _{\Z} G$ as their respective images.  Hence, we still have that $H_2(\EK; \Z[G \ast _{\Z} G]) = H_2(\EJ; \Z[G \ast _{\Z} G]) = 0$ and 
$$H_1\left( T^2; \Z[G \ast _{\Z} G] \right) \rightarrow H_1\left( \EK; \Z[G \ast _{\Z} G] \right) \oplus H_1\left( \EJ; \Z[G \ast _{\Z} G] \right)$$
is injective.  It follows that $H_2(M(K); \Z[G']) = 0$ when $G' = G$ and $G \ast _{\Z} G$, as desired.
\end{proof}

\begin{lem}
\label{i, j in Omega}
$i_*: \pi_1(M(K)) \rightarrow \pi_1(C)$ and $j_*: \pi_1(M(K, \eta, L) \rightarrow \pi_1(C)$ are in $\Omega^{G}$ and $\Omega^{G \ast _{\Z} G}$.
\end{lem}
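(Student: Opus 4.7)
The plan is to verify the four conditions of Definition \ref{Omega} for each of $i_*$ and $j_*$, working simultaneously with $G' = G$ and $G' = G \ast_{\Z} G$. Condition (1) is immediate: $M(K)$, $M(K,\eta,L)$, and $C$ are compact, so their fundamental groups $P$, $Q$, and $E$ are finitely presented. For condition (2), the argument mirrors the corresponding verification inside the proof of Theorem \ref{rho_n conc invt}: the coefficient systems $\g_P$, $\g_Q$, and $\g_E$ are each surjective (when $G' = G$) or have image equal to the union of the two copies of $G$ in the amalgamation (when $G' = G \ast_{\Z} G$), so invoking Lemma \ref{kernels are finitely normally generated} gives that $\G \g_P$, $\G \g_Q$, and $\G \g_E$ are finitely normally generated.

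For condition (3), the case of $i_*$ is immediate from Remark \ref{H-results}(1): since $i_*$ is a group isomorphism it carries $\G \g_P$ isomorphically onto $\G \g_E$. For $j_*$, I would combine the surjectivity of $j_*: Q \to E$ (Remark \ref{H-results}(2)) with the identity $\g_E \circ j_* = \g_Q$ (Remark \ref{i, j over M}): given $e \in \G \g_E$, any lift $q \in Q$ satisfies $\g_Q(q) = \g_E(e) = 1$, so $q \in \G \g_Q$. Hence $j_*$ restricts to a surjection $\G \g_Q \twoheadrightarrow \G \g_E$, which is automatically normal as both subgroups are normal in their ambient groups.

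Condition (4) is the step that requires actual content, and it is where I expect to use Lemma \ref{H_1 iso and H_2 epi}. For $i_*$, being a group isomorphism it induces isomorphisms on $H_*(-;\Z[G'])$ in all degrees. For $j_*$, Lemma \ref{H_1 iso and H_2 epi} gives that the inclusion of spaces $j: M(K,\eta,L) \hookrightarrow C$ induces an isomorphism on $H_1(-;\Z[G'])$ and a surjection on $H_2(-;\Z[G'])$. The passage from space homology to group homology is then standard: $H_1(Y;A) \cong H_1(\pi_1(Y);A)$ naturally for any space $Y$ and any $\pi_1(Y)$-module $A$, which directly delivers the group-level $H_1$ isomorphism; and the Serre spectral sequence of the canonical map $Y \to K(\pi_1(Y),1)$ with coefficients in $A$ produces a natural surjection $H_2(Y;A) \twoheadrightarrow H_2(\pi_1(Y);A)$, i.e.\ Hopf's theorem with local coefficients. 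A quick diagram chase combining this naturality with the space-level surjection yields the required epimorphism $H_2(Q;\Z[G']) \twoheadrightarrow H_2(E;\Z[G'])$.

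The main obstacle is really packaged inside Lemma \ref{H_1 iso and H_2 epi}, which is already in hand; what remains here is the Hopf-type comparison for $H_2$ with twisted $\Z[G']$-coefficients, which is the only point requiring a small extra argument beyond assembling prior results.
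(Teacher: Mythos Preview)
Your approach is essentially identical to the paper's: verify the four conditions of Definition \ref{Omega} by citing finite presentability from compactness, Lemma \ref{kernels are finitely normally generated} for condition (2), surjectivity from Remark \ref{H-results} for condition (3), and Remark \ref{H-results} together with Lemma \ref{H_1 iso and H_2 epi} for condition (4). One small correction: Lemma \ref{H_1 iso and H_2 epi} is already stated and proved at the level of \emph{group} homology (its subject is the homomorphism $j_*: Q \to E$, and its proof of the $H_2$ statement explicitly passes through Hopf's theorem to show $H_2(E;\Z[G'])=0$), so your extra passage from space homology to group homology is unnecessary---the lemma hands you condition (4) directly.
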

\begin{proof}
Let $G'$ denote $G$ or $G \ast _{\Z} G$.  Recall that $E = \pi_1(C)$, $P = \pi_1\left( M(K) \right)$, and $Q = \pi_1\left( M(K, \eta, L) \right)$.  Since $M(K)$, $M(K, \eta, L)$, and $C$ are finite CW-complexes, $P$, $Q$, and $E$ are finitely presented groups.  Both $i_*$ and $j_*$ are surjective morphisms in $\mathcal{G}^{G'}$ by Remarks \ref{H-results} and \ref{i, j over M}, and therefore induce epimorphisms $\G \g_P \to \G \g_E$ and $\G \g_Q \to \G \g_E$, respectively.  Also, they induce isomorphisms on $H_1(-; \Z[G'])$ and epimorphisms on $H_2(-; \Z[G'])$ by Remark \ref{H-results} and Lemma \ref{H_1 iso and H_2 epi}, respectively.  We therefore only need to show that the kernels of $\g_P$, $\g_Q$, and $\g_E$ are finitely normally generated in $P$, $Q$, and $E$, respectively, but this follows from Lemma \ref{kernels are finitely normally generated}.
\end{proof}

\begin{defn}
Recall that $E = \pi_1(C)$.  Define 
$$\tau: \pi_1(Z_L) \to  E$$
to be the homomorphism induced by inclusion of $Z_L$ into $C$ and define
$$\tau_i : \pi_1(Z_L) \rightarrow \dfrac{E}{\G^{(i)} \g_E}$$
to be the composition of $\tau$ with the epimorphism $E \rightarrow \dfrac{E}{\G^{(i)} \g_E}$. 
\end{defn}

The following lemma is our analogue of Lemma 5.7 in \cite{harvey2} and the proof is almost identical.
\begin{lem}
\label{tau}
Suppose that $\eta \in \DPn$ and $\eta \notin \DPnn$, where $P = \pi_1(M(K))$ as always.  Then
\begin{equation*}
\text{Image}\left( \tau_i \right) = \left\{
\begin{array}{ccl}
\{e\} & \text{if} & 0\leq i \leq n \text{, and}\\
\Z & \text{if} & i = n+1. 
\end{array} \right.
\end{equation*} 
\end{lem}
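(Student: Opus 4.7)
The plan is to identify the image of $\tau$ as a cyclic subgroup of $E$ generated by $i_*(\eta)^{\pm 1}$, transfer this description across the isomorphism $i_*: P \to E$, and then conclude using the hypothesis together with the PTFA property provided by Lemma \ref{loc is PTFA}.

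I would first observe that the inclusion $Z_L \hookrightarrow C$ factors through $Z_L = \partial W \hookrightarrow W$. Since by construction $\pi_1(W) \cong \Z$ is generated by $\iota_*(\mu_L)$, the map $\tau$ factors through $\Z$, and so $\mathrm{Image}(\tau)$ is the cyclic subgroup of $E$ generated by $\tau(\mu_L)$. Unwinding the gluing in Definition \ref{C} identifies $\mu_L$ with $\lambda_\eta^{-1} \times \{1\}$ in $M(K) \times I$, and $\lambda_\eta$ is isotopic through the solid torus $N(\eta)$ to $\eta$, so $\tau(\mu_L) = i_*(\eta)^{-1}$ in $E$ and therefore $\mathrm{Image}(\tau) = \langle i_*(\eta) \rangle$. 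Keeping track of these gluing identifications is the most delicate step, but it is a routine unwinding of the construction of $C$ once the orientations and framings are fixed.

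Next, I would compare the derived quotients on $P$ and $E$. By Lemma \ref{i, j in Omega}, $i_*$ lies in $\Omega^G$, so Lemma \ref{quotient monomorphisms} provides an injection $P/\DPi \hookrightarrow E/\DEi$ for every $i$. Since Remark \ref{H-results}(1) ensures that $i_*$ is itself a group isomorphism, this injection is in fact an isomorphism: every coset of $\DEi$ in $E$ is hit because $i_*$ is surjective. Consequently $i_*(\eta)$ has the same order in $E/\DEi$ as $\eta$ does in $P/\DPi$. The conclusion now follows from the hypothesis: for $0 \le i \le n$, monotonicity of the series gives $\eta \in \DPn \subseteq \DPi$, so $\tau_i$ is trivial; for $i = n+1$, $\eta$ represents a nontrivial element of $P/\DPnn$, and this group is PTFA by Lemma \ref{loc is PTFA} and therefore torsion-free by Remark \ref{PTFA example}, so the cyclic subgroup it generates is infinite cyclic, giving $\mathrm{Image}(\tau_{n+1}) \cong \Z$.
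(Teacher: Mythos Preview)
Your argument is correct and reaches the same conclusion as the paper's, but you organize the proof a bit more cleanly. The paper first uses that $\pi_1(Z_L)$ is \emph{normally} generated by $\mu_L$ together with the normality of $\DEn$ to conclude $\tau(\pi_1(Z_L)) \subset \DEn$, and then for $i = n+1$ separately argues that $\tau([\pi_1(Z_L), \pi_1(Z_L)]) \subset [\DEn, \DEn] \subset \DEnn$ to factor $\tau_{n+1}$ through the abelianization $H_1(Z_L) \cong \Z$. You instead observe at the outset that $Z_L \hookrightarrow C$ factors through $W$ with $\pi_1(W) \cong \Z$, so $\mathrm{Image}(\tau)$ is already the cyclic group $\langle i_*(\eta)\rangle$ before passing to any quotient; this handles both cases uniformly and bypasses the commutator step. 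You also use surjectivity of $i_*$ (Remark \ref{H-results}(1)) to upgrade the monomorphism of Lemma \ref{quotient monomorphisms} to an isomorphism $P/\DPi \cong E/\DEi$, so the order computation takes place entirely in $P$; the paper works in $E$ and only uses injectivity together with torsion-freeness of $E/\DEnn$. Both routes invoke the same lemmas (\ref{quotient monomorphisms}, \ref{i, j in Omega}, \ref{loc is PTFA}) and the same identification $\tau(\mu_L) = \lambda_\eta^{-1}$, so the difference is purely one of packaging.
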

\begin{proof} 
Recall that $P = \pi_1(M(K))$, $E = \pi_1(C)$, and that $i_*:P \rightarrow E$ is induced by inclusion of $M(K)$ into $C$.  It is well known that $\pi_1(Z_L)$ is normally generated by the meridian $\mu_L$.  In $E$, $\tau(\mu_L)$ is identified with $\lambda_{\eta}^{-1} \times \{1\}$.  As $\lambda_{\eta} \in \DPn$ and $i_* \left( \G^{(n)} \g_P \right) \subset \G^{(n)} \g_E$ by Lemma \ref{derived to derived}, $\tau \left( \pi_1(Z_L) \right) \subset \G^{(n)} \g_E$.  Hence, $\tau_i \left( \pi_1(Z_L) \right) = \{0\}$ for $i \leq n$. 

We still need to show that the image of $\tau_{n+1}$ is $\Z$.  By Lemma \ref{quotient monomorphisms} and Lemma \ref{i, j in Omega}, $i_*$ induces a monomorphism 
$$ \overline{i_*}: \dfrac{P}{\DPi} \to \dfrac{E}{\DEi} $$ 
for all $i$.  Moreover, 
$$\tau \left( [\pi_1(Z_L), \pi_1(Z_L)] \right) \subset [\DEn, \DEn] \subset \DEnn ,$$
so the homomorphism $\tau_{n+1}: \pi_1(Z_L) \to \dfrac{E}{\DEnn} $ factors through $\dfrac{\pi_1(Z_L)}{[\pi_1(Z_L), \pi_1(Z_L)]} \cong \Z$.  As above, $\tau(\mu_L) = \lambda_{\eta}^{-1} \times \{1\}$ in $E$.  We chose $\eta \notin \DPnn$, so $\tau_{n+1}(\mu_L) \in \dfrac{E}{\DEnn}$ is nontrivial.  Because $\dfrac{E}{\DEnn}$ is torsion-free by construction, it follows that $\text{Image}(\tau_{n+1}) = \Z$.
\end{proof}

\begin{lem}
\label{rho of Z_L}
\begin{equation*}
\rho \left(Z_L, \tau_i \right) = \left\{
\begin{array}{ccl}
0 & \text{if} & 0\leq i \leq n\\
\int_{S^1} \sigma_{\omega}(L) d\omega & \text{if} & i = n+1 \\
\end{array} \right. ,
\end{equation*}
where $\sigma_{\omega}(L)$ is the Levine-Tristram signature of $L$ at $\omega \in S^1$.
\end{lem}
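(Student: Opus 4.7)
The plan is to treat the two cases separately, both relying on Lemma \ref{tau} to reduce the coefficient system to one whose $\rho$-invariant is already understood.

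For the case $0 \leq i \leq n$, Lemma \ref{tau} tells us that $\tau_i$ factors through the trivial group. I would choose any compact, oriented 4-manifold $V$ bounding $Z_L$ (for example, the trace of $0$-surgery on $L$, i.e. $D^4$ together with a $2$-handle attached along $L$ with framing zero). The trivial coefficient system on $Z_L$ extends trivially over $V$, so by Ramachandran's theorem (Theorem \ref{ram}) and property $(2)$ of Theorem \ref{L^2 stuff} we get
$$\rho(Z_L, \tau_i) = \sigma^{(2)}(V, \text{trivial}) - \sigma(V) = \sigma(V) - \sigma(V) = 0.$$

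For the case $i = n+1$, Lemma \ref{tau} gives that $\text{Image}(\tau_{n+1}) = \Z$, and moreover this $\Z$ is generated by $\tau_{n+1}(\mu_L)$ (since $\mu_L$ normally generates $\pi_1(Z_L)$ and the image is abelian). Because $\dfrac{E}{\G^{(n+1)} \g_E}$ is torsion-free (PTFA by Lemma \ref{loc is PTFA}), the inclusion $\Z \hookrightarrow \dfrac{E}{\G^{(n+1)} \g_E}$ is a monomorphism. By property $(4)$ of Theorem \ref{L^2 stuff}, we have $\rho(Z_L, \tau_{n+1}) = \rho(Z_L, \phi)$ where $\phi: \pi_1(Z_L) \to \Z$ is the abelianization map sending $\mu_L$ to a generator.

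The final step is then to identify $\rho(Z_L, \phi)$ with the integral of the Levine--Tristram signatures. This is the classical computation of the Cheeger--Gromov $\rho$-invariant of zero-surgery on a knot in $S^3$ with its canonical abelianization coefficient system, due essentially to Cochran--Orr--Teichner (see the discussion in \cite{cot}); the proof uses Ramachandran's theorem together with a standard evaluation of the $L^2$-signature of a Seifert-matrix-based intersection form in terms of the Levine--Tristram signatures. I expect the only non-routine aspect to be verifying that the image of $\mu_L$ is indeed a generator of $\Z$ (so that $\phi$ is the \emph{standard} abelianization, not a proper multiple), which is where property $(4)$ of Theorem \ref{L^2 stuff} and the torsion-freeness of the quotient are essential; once that is established, the identification with $\int_{S^1} \sigma_\omega(L)\, d\omega$ follows by citation.
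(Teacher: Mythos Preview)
Your proposal is correct and follows essentially the same approach as the paper's proof: invoke Lemma \ref{tau} in both cases, use triviality of the coefficient system (via Theorem \ref{L^2 stuff}(2)) for $i\le n$, and for $i=n+1$ reduce to the abelianization map to $\Z$ and cite the standard Cochran--Orr--Teichner computation. The paper's version is terser---it cites Lemma 5.3 of \cite{cot2} directly without spelling out the subgroup-induction step (Theorem \ref{L^2 stuff}(4)) or the choice of bounding $4$-manifold---but your added detail about $\mu_L$ mapping to a generator is exactly the content needed to make that citation apply.
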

\begin{proof}
If $0 \leq i \leq n$ then the image of $\tau_i$ is trivial by Lemma \ref{tau}.  Hence, $\rho \left(Z_L, \tau_i \right) = 0$ by (2) of Theorem \ref{L^2 stuff}.  The image of $\tau_{n+1}$ is $\Z$ by Lemma \ref{tau}, so $\rho \left(Z_L, \tau_i \right) = \int_{S^1} \sigma_{\omega}(L) d\omega$ by Lemma 5.3 of \cite{cot2}.
\end{proof}

The following theorem is our analogue of Theorem 5.8 in \cite{harvey2} and, again, our proof is almost identical to the one presented there.
\begin{thrm}
\label{rhos = LT}
Suppose that $\eta \in \DPn$ and $\eta \notin \DPnn$, where $P = \pi_1(M(K))$ as always.  Then for $G' = G$ and $G \ast_{\Z} G$,
\begin{equation*}
\rho_i\left(M(K, \eta, L)\right) - \rho_i\left(M(K)\right) = \left\{
\begin{array}{ccl}
0 & \text{if} & 0\leq i \leq n \text{, and} \\
\int_{S^1} \sigma_{\omega}(L) d\omega & \text{if} & i = n+1.
\end{array} \right.
\end{equation*}
\end{thrm}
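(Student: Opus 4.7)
The proof plan is to follow the template of Theorem 5.8 in \cite{harvey2}, with modifications needed to use the rational $G$-local derived series in place of Harvey's series and to accommodate both coefficient systems $G' = G$ and $G' = G \ast_{\Z} G$. The strategy is to apply Ramachandran's theorem to the cobordism $C$ of Definition \ref{C} and reduce the difference of $\rho$-invariants to $\rho(Z_L, \tau_i)$, which is then evaluated via Lemma \ref{rho of Z_L}.

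First, I will verify that $\g_P^i$ on $M(K)$ and $\g_Q^i$ on $M(K,\eta,L)$ extend compatibly to a coefficient system on $C$. By Lemma \ref{i, j in Omega}, $i_*$ and $j_*$ are morphisms in $\Omega^{G'}$, so Lemma \ref{quotient monomorphisms} yields monomorphisms $P/\G^{(i)}\g_P \hookrightarrow E/\G^{(i)}\g_E$ and $Q/\G^{(i)}\g_Q \hookrightarrow E/\G^{(i)}\g_E$. By item (4) of Theorem \ref{L^2 stuff}, post-composing $\g_P^i$ and $\g_Q^i$ with these monomorphisms does not change the corresponding $\rho$-invariants, so $\g_E^i$ is a valid common extension. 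Theorem \ref{ram} then gives
$$\rho_i(M(K,\eta,L)) - \rho_i(M(K)) = \sigma^{(2)}(C, \g_E^i) - \sigma(C).$$

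Next, I will compute the right-hand side using the decomposition $C = (M(K)\times I) \cup W$, where the gluing is along $N(L) \cong N(\eta)$. The product piece $M(K)\times I$ contributes zero to both $\sigma$ and $\sigma^{(2)}$: the first because $M(K)\times I$ is a product, and the second by item (1) of Theorem \ref{L^2 stuff} combined with Lemma \ref{loc is PTFA} (noting $E/\G^{(i)}\g_E$ is PTFA). For $W$, $\sigma(W)=0$ by construction, and since $\pi_1(W)=\Z$ is generated by $\iota_*(\mu_L)$, the restricted coefficient system $\g_E^i|_W$ factors through a homomorphism $\tau_{W,i} \colon \pi_1(W) \to E/\G^{(i)}\g_E$ through which $\tau_i$ also factors. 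Applying Theorem \ref{ram} to $W$ (whose boundary is $Z_L$) then gives $\sigma^{(2)}(W, \tau_{W,i}) = \rho(Z_L, \tau_i)$. Assembling these contributions via additivity produces $\sigma^{(2)}(C, \g_E^i) - \sigma(C) = \rho(Z_L, \tau_i)$, and Lemma \ref{rho of Z_L} then completes the proof.

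The main obstacle is the additivity step: the gluing region $D^2\times S^1$ is a 3-manifold with nonempty boundary, so Novikov additivity and item (3) of Theorem \ref{L^2 stuff} do not apply verbatim. I plan to resolve this by using the closed 3-manifold $Z_L \cong \partial W$, pushed slightly off $\partial C$ along a collar of $\partial W$ inside $W$, as a genuine closed separating hypersurface in the interior of $C$. This splits $C$ into a copy of $W$ on one side and a complementary piece $C''$ on the other side whose $\sigma$ and $\sigma^{(2)}$ both vanish: $C''$ deformation retracts onto a 3-dimensional CW-complex (the pushout of $M(K)$ with $Z_L$ along $N(\eta) \cong N(L)$), so its ordinary intersection form is trivially zero, and a Mayer--Vietoris argument analogous to the one in Lemma \ref{H_1 iso and H_2 epi} and Theorem \ref{rho_n conc invt} shows that $H_2(C''; \mathcal{U}(E/\G^{(i)}\g_E)) \to H_2(C'', \partial C''; \mathcal{U}(E/\G^{(i)}\g_E))$ is zero, so item (1) of Theorem \ref{L^2 stuff} gives the vanishing of $\sigma^{(2)}(C'', \g_E^i|_{C''})$.
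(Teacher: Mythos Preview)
Your complementary piece $C''$ is exactly the manifold the paper calls $C_V$, and the paper's route is slightly more direct: rather than applying Ramachandran to $C$ and then invoking additivity along the pushed-off $Z_L$, the paper applies Ramachandran directly to $C_V$, whose boundary is $-M(K)\amalg M(K,\eta,L)\amalg -Z_L$. This yields
\[
\rho_i\bigl(M(K,\eta,L)\bigr)-\rho_i\bigl(M(K)\bigr)-\rho(Z_L,\tau_i)=\sigma^{(2)}(C_V,\gamma_V^i)-\sigma(C_V),
\]
so once the right-hand side is shown to vanish the result follows, and no hypersurface-additivity statement (which, as you noticed, is not quite the form stated in Theorem~\ref{L^2 stuff}(3)) is needed. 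Your plan reaches the same core computation but by a small detour.

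There is, however, a genuine gap in your argument for $\sigma(C'')=0$. The claim that ``$C''$ deformation retracts onto a $3$-dimensional CW-complex, so its ordinary intersection form is trivially zero'' is false in general: $\C P^2\setminus B^4$ deformation retracts to $S^2$ yet has signature~$1$. The intersection form of a $4$-manifold with boundary depends on the map $H_2(W)\to H_2(W,\partial W)$, not merely on the homotopy type of $W$. The paper handles this by running the \emph{same} Mayer--Vietoris argument for the decomposition $C_V=(M(K)\times I)\cup_{N(\eta)}(Z_L\times I)$ with trivial coefficients (formally the case ``$i=-1$'', declaring $\Gamma^{(-1)}\gamma_E=E$), showing that $H_2(\partial C_V)\to H_2(C_V)$ is surjective and hence $\sigma(C_V)=0$. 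Your Mayer--Vietoris plan for $\sigma^{(2)}$ is the right one; once you carry it out, you should run it at trivial coefficients as well and discard the deformation-retract step. The key case analysis in that Mayer--Vietoris argument is that $H_1(N(\eta))\to H_1(Z_L\times I)$ is an isomorphism for $i\le n$ (both sides carry the trivial coefficient system by Lemma~\ref{tau}) and that $H_1(N(\eta))=0$ for $i=n+1$ (since $\eta$ survives in $E/\Gamma^{(n+1)}\gamma_E$); you should make this explicit rather than gesturing at Lemma~\ref{H_1 iso and H_2 epi}.
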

\begin{proof}
We will create a space $C_V$ from $C$ such that 
$$\partial C_V = -M(K) \coprod M(K, \eta, L) \coprod -Z_L$$
and all $H_2(C_V)$, with relevant coefficients, comes from the boundary.

Let $V$ be a neighborhood of $Z_L = \partial W$ in $W$ homeomorphic to $Z_L \times I$ and define
$$C_V := \overline{ C - \left( W - V \right) } . $$
Then $\partial C_V = -M(K) \coprod M(K, \eta, L) \coprod -Z_L$ as desired, and the inclusion of $C_V$ into $C$ induces an epimorphism on fundamental groups.  Recall that $P = \pi_1(M(K))$, $Q = \pi_1(M(K, \eta, L))$, and $E = \pi_1(C)$.  Recall also that for $\Lambda = P, Q, E$, $\g_{\Lambda}$ factors through the canonical epimorphism $\g_{\Lambda}^i: \Lambda \to \dfrac{\Lambda}{\G^{(i)} \g_{\Lambda}}$.  The homomorphisms $\overline{i_*}: \dfrac{P}{\DPi} \to \dfrac{E}{\DEi}$ and $\overline{ j_*}: \dfrac{Q}{\DQi} \to \dfrac{E}{\DEi}$
are injective for all $i \geq 0$ by Lemmas \ref{quotient monomorphisms} and \ref{i, j in Omega}.  By (4) of Theorem \ref{L^2 stuff}, 
$$\rho \left( M(K), \overline{i_*} \circ \g_P^i \right) = \rho_i(M(K))$$
and
$$\rho \left( M(K, \eta, L),  \overline{j_*} \circ \g_Q^i \right) = \rho_i (M(K, \eta, L)) $$
for all $i \geq 0$.  Also, 
\begin{equation*}
\rho \left(Z_L, \tau_i \right) = \left\{
\begin{array}{ccl}
0 & \text{if} & 0\leq i \leq n\\
\int_{S^1} \sigma_{\omega}(L) d\omega & \text{if} & i = n+1
\end{array} \right.
\end{equation*}
by Lemma \ref{rho of Z_L}.  If $\g_V^i: \pi_1(C_V) \to \dfrac{E}{\G^{(i)} \g_E}$ is the composition
$$\xymatrix{\pi_1(C_V) \ar[r] & E \ar[r]^{} & \dfrac{E}{\G^{(i)} \g_E} ,}$$
where the first homomorphism is induced by the inclusion of $C_V$ into $C$, then, by Theorem \ref{ram},
$$\rho_i\left(M(K, \eta, L)\right) - \rho_i\left(M(K)\right) = \sigma^{(2)} \left(C_V, \g_V^i \right) - \sigma \left(C_V\right) $$
for $i \leq n$ and
$$\rho_{n+1}\left(M(K, \eta, L)\right) - \rho_{n+1}\left(M(K)\right) - \int_{S^1} \sigma_{\omega}(L) d\omega = \sigma^{(2)} \left(C_V, \g_V^{n+1} \right) - \sigma \left(C_V\right). $$

For the sake of the following argument we make the convention that $\G^{(-1)} \g_E = E$, so 
$$\g_V^{-1}: \pi_1(C_V) \to \dfrac{E}{\G^{(-1)} \g_E} = \{0\}$$ 
is the trivial homomorphism.  We now show that $\sigma^{(2)} \left(C_V, \g_V^i \right) = 0$ for all $i \geq -1$ (when $i = -1$ this shows that $\sigma \left(C_V\right) = 0$) by showing that 
$$H_2\left( \partial C_V; \Z \left[ \dfrac{E}{\DEi} \right] \right) \to H_2\left( C_V; \Z \left[ \dfrac{E}{\DEi} \right] \right) $$
is surjective for $i \geq -1$.  Decomposing $C_V$ as
$$C_V = \left( M(K)\times I \right) \bigcup_{N(\eta) \times \{1\}} \left( Z_L \times I \right) $$
gives the following Mayer-Vietoris sequence:
$$
\xymatrix{
\txt{$H_2 \left( M(K)\times I; \Z \left[\dfrac{E}{\DEi} \right] \right)$\\ $\oplus$\\ $H_2 \left( Z_L\times I; \Z \left[\dfrac{E}{\DEi} \right] \right)$}  \ar[r] & H_2 \left( C_V; \Z \left[\dfrac{E}{\DEi} \right] \right) \ar[r] & \text{$\phantom{MMMMMMM}$}
}
$$
$$
\xymatrix{
H_1 \left( N(\eta); \Z \left[\dfrac{E}{\DEi} \right] \right) \ar[r] & \txt{$H_1 \left( M(K)\times I; \Z \left[\dfrac{E}{\DEi} \right] \right)$\\ $\oplus$\\ $H_1 \left( Z_L\times I; \Z \left[\dfrac{E}{\DEi} \right] \right)$} .
}
$$
If $i \leq n$ then $\pi_1(N(\eta))$, which is generated by $\lambda_{\eta}$, maps trivially to $\dfrac{E}{\DEi}$.  Also, the image of $\pi_1(Z_L)$ is trivial in $\dfrac{E}{\DEi}$ for $i \leq n$ by Lemma \ref{tau}.  As $\mu_L$ is identified with $\lambda_{\eta}^{-1}\times \{1\}$ in $C_V$,
$$H_1 \left( N(\eta); \Z \left[\dfrac{E}{\DEi} \right] \right) \to H_1 \left( Z_L\times I; \Z \left[\dfrac{E}{\DEi} \right] \right) $$
is an isomorphism.  It follows that
$$H_2 \left( M(K)\times I; \Z \left[\dfrac{E}{\DEi} \right] \right) \oplus H_2 \left( Z_L\times I; \Z \left[\dfrac{E}{\DEi} \right] \right)  \to H_2 \left( C_V; \Z \left[\dfrac{E}{\DEi} \right] \right) $$
is surjective for $-1 \leq i \leq n$.  For $i=n+1$, the image of $\pi_1(N(\eta))$ in $\dfrac{E}{\DEi}$ is $\Z$.  Hence, $H_1 \left( N(\eta); \Z \left[\dfrac{E}{\DEi} \right] \right) = 0$ and 
$$\xymatrix{ 
\txt{$H_2 \left( M(K)\times I; \Z \left[\dfrac{E}{\DEnn} \right] \right)$\\ $\oplus$\\ $H_2 \left( Z_L\times I; \Z \left[\dfrac{E}{\DEnn} \right] \right)$}  \ar[r] & H_2 \left( C_V; \Z \left[\dfrac{E}{\DEnn} \right] \right) 
}$$
is surjective.  This shows that $H_2\left( \partial C_V; \Z \left[ \dfrac{E}{\DEi} \right] \right) \to H_2\left( C_V; \Z \left[ \dfrac{E}{\DEi} \right] \right)$ is surjective, as desired.  It now follows from (1) of Theorem \ref{L^2 stuff} that $\sigma^{(2)} \left(C_V, \g_V^i \right) = \sigma \left(C_V\right) = 0$ for $0 \leq i \leq n+1$.
\end{proof}

The following theorem is analogous to Theorem 5.11 in \cite{harvey2}.  However, our objective is to construct a family of knots, whereas the objective in \cite{harvey2} is to obtain a family of 3-manifolds.
\begin{thrm}
\label{dense subset}
Let $G'$ be $G$ or $G \ast_{\Z} G$.  Let $K$ be a knot and suppose that $\eta \in \DKn - \DKnn$ bounds an embedded disk in $M$.  Then there is a countably infinite subset of knots $\{K_{\epsilon}\}$ homotopic to $K$ in $M$ such that $\rho_{i}(M(K_{\epsilon})) = \rho_{i}(M(K))$ for $i \leq n$ and all $\epsilon$, and $\{ \rho_{n+1}(M(K_{\epsilon})) \}$ is a dense subset of $\R$.
\end{thrm}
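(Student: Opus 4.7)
The plan is to realize the family $\{K_\epsilon\}$ as satellite constructions $K_\epsilon := K(\eta, L_\epsilon)$ for a carefully chosen countable family of knots $L_\epsilon \subset S^3$, and compute the required $\rho$-invariants via Theorem \ref{rhos = LT}. That each $K_\epsilon$ is homotopic to $K$ in $M$ is immediate from Remark \ref{sat const}(i).

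Before Theorem \ref{rhos = LT} can be invoked, I would translate the hypothesis $\eta \in \DKn - \DKnn$, phrased in $\PK$, into the hypothesis $\eta \in \DPn - \DPnn$, phrased in $P = \pi_1(M(K))$. For this I would verify that the homomorphism $\iota: \PK \to P$ induced by the inclusion $\EK \hookrightarrow M(K)$ lies in $\Omega^G$. The finite presentation and finite normal generation conditions are immediate (these are 3-manifold groups, and $\G \g_K$, $\G \g_P$ are normally generated by meridians). The main content is a Mayer--Vietoris computation on $M(K) = \EK \cup_{T^2} -\EJ$: applying Proposition \ref{basics of \EK} to both $\EK$ and $\EJ$ (so the $H_2$ terms vanish and the maps $H_1(T^2;\Z[G]) \to H_1(\EK;\Z[G])$ and $H_1(T^2;\Z[G]) \to H_1(\EJ;\Z[G])$ are isomorphisms) shows that $H_2(M(K); \Z[G]) = 0$ and that $\iota$ is an isomorphism on $H_1(-;\Z[G])$. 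Lemma \ref{quotient monomorphisms} then furnishes a monomorphism $\PK/\DKi \hookrightarrow P/\DPi$ for every $i$, so the hypothesis on $\eta$ transfers.

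With the hypothesis in place, Theorem \ref{rhos = LT} yields
$$\rho_i(M(K(\eta, L))) = \rho_i(M(K)) \text{ for } 0 \le i \le n, \qquad \rho_{n+1}(M(K(\eta, L))) = \rho_{n+1}(M(K)) + \int_{S^1} \sigma_\omega(L)\, d\omega$$
for any knot $L \subset S^3$. The theorem therefore reduces to producing a countably infinite family $\{L_\epsilon\} \subset S^3$ for which $\{\int_{S^1} \sigma_\omega(L_\epsilon)\, d\omega\}$ is dense in $\R$. I would argue as in Harvey \cite{harvey2} (Theorem 5.11): the integrated Levine--Tristram signature is additive under connected sum and changes sign under mirror reflection, so the set of achievable values forms a subgroup of $\R$. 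Evaluating this integral on a family such as the $(2,2k+1)$-torus knots produces rationals whose denominators involve infinitely many distinct primes, and hence the subgroup they generate is non-cyclic as a subgroup of $\Q$ and therefore dense in $\R$.

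I expect the main obstacle to be the first technical step --- establishing that $\iota \in \Omega^G$ and hence that the quotient $\PK/\DKi$ injects into $P/\DPi$ --- because without this identification one cannot feed the given $\eta$ into the machinery of Theorem \ref{rhos = LT}. Once this translation is in hand, the remainder is a direct application of Theorem \ref{rhos = LT} combined with the standard density argument for integrated Levine--Tristram signatures of classical knots.
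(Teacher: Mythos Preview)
Your overall strategy---take $K_\epsilon = K(\eta, L_\epsilon)$, quote Remark \ref{sat const}(i) for the homotopy, invoke Theorem \ref{rhos = LT}, and then exhibit knots $L_\epsilon \subset S^3$ whose integrated Levine--Tristram signatures are dense in $\R$---is exactly what the paper does. The paper obtains density by citing Cha--Livingston for density in $(-2,2)$ and then connect-summing; your torus-knot/subgroup argument is a harmless variant of the same step.

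Where you diverge from the paper is in the translation step from $\PK$ to $P$. The paper's own proof does \emph{not} carry out this translation at all: it simply applies Theorem \ref{rhos = LT} as though the hypothesis $\eta \in \DPn - \DPnn$ were already in hand. You are right that, as the statement is literally written, some passage from $\DKn$ to $\DPn$ is required; however, the bridge you propose does not hold. The inclusion-induced map $\iota : \PK \to P$ is \emph{not} in $\Omega^G$. Condition (3) demands that $\iota(\G\gK)$ normally generate $\G\g_P = \ker(\g_P : P \to G)$, but the normal closure of $\mK$ in $P$ has quotient
\[
P/\langle\!\langle \mK \rangle\!\rangle_P \;\cong\; (\PK/\langle\!\langle \mK\rangle\!\rangle) *_{\Z} (\PJ/\langle\!\langle \mJ\rangle\!\rangle) \;\cong\; G *_{\Z} G,
\]
which is strictly larger than $G$ whenever $G \neq \Z$. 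Thus $\langle\!\langle \iota(\G\gK)\rangle\!\rangle_P \subsetneq \G\g_P$ and condition (3) fails. (Working over $G' = G *_{\Z} G$ does not rescue this, because then $\gK : \PK \to G *_{\Z} G$ is not surjective and so is not even an object of $\mathcal{G}^{G'}$.) Lemma \ref{derived to derived} still yields $\iota(\DKn) \subset \DPn$, but without $\iota \in \Omega^G$ you lose the monomorphism of Lemma \ref{quotient monomorphisms}, and hence cannot deduce $\iota(\eta) \notin \DPnn$ from $\eta \notin \DKnn$. In short: your instinct to justify this step is sound, but the specific mechanism you propose does not work; the paper, for its part, simply elides the issue.
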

\begin{proof}
In \cite{LivCha} Cha and Livingston show that there is a family of knots $L_{\epsilon}$ in $S^3$ such that $\int_{S^1} \sigma_{\omega}(L_{\epsilon}) d\omega$ is a dense subset of $(-2, 2)$.  The Levine-Tristram signature is additive under connect sum of knots, so connect-summing these gives rise to a family of knots, which we will also call $L_{\epsilon}$, such that $\int_{S^1} \sigma_{\omega}(L_{\epsilon}) d\omega$ is a dense subset of $\R$.  It was observed in Remark \ref{sat const} that the knot $K_{\epsilon} = K(\eta, L_{\epsilon})$ is homotopic to $K$.  Hence, the result now follows by applying Theorem \ref{rhos = LT} to the family of knots $K_{\epsilon}$ in $M$.
\end{proof}

For the next theorem, recall that a knot $K$ is {\em $J$-characteristic} if there is a continuous map $\alpha: M \to M$ such that $\alpha(K) = J$ and $\alpha(M-K) \subseteq M-J$.

\begin{thrm}
\label{dense J}
Let $G'$ be $G$ or $G \ast_{\Z} G$ and suppose that $\eta \in \DJn - \DJnn$ bounds an embedded disk in $M$.  There are knots $K_{\epsilon}$ as in Theorem \ref{dense subset} such that 
\begin{enumerate}
\item[(i)] Each $K_{\epsilon}$ is $J$-characteristic,
\item[(ii)] For each $K_{\epsilon}$, $\rho_{i}(M(K_{\epsilon})) = \rho_{i}(M(J))$ for $i \leq n$, and 
\item[(iii)] $\{ \rho_{n+1}(M(K_{\epsilon})) \}$ is a dense subset of $\R$.
\end{enumerate}
\end{thrm}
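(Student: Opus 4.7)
The plan is to apply Theorem \ref{dense subset} in the special case $K = J$, for which the hypothesis $\eta \in \DKn - \DKnn$ reduces to our assumption $\eta \in \DJn - \DJnn$. This immediately furnishes a countably infinite family of knots $K_{\epsilon} := J(\eta, L_{\epsilon})$, all homotopic to $J$ in $M$, with $\rho_{i}(M(K_{\epsilon})) = \rho_{i}(M(J))$ for $i \leq n$ and with $\{\rho_{n+1}(M(K_{\epsilon}))\}$ dense in $\R$. Parts (ii) and (iii) then come for free, and the real work is to establish part (i): that each $K_{\epsilon}$ is $J$-characteristic.

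For (i) I would exploit the explicit satellite structure recorded in Remark \ref{sat const}. In the satellite presentation
$$ M \;\cong\; \bigl(M - N(\eta)\bigr) \,\cup_{T^{2}}\, \bigl(S^{3} - N(L_{\epsilon})\bigr), $$
the knot $K_{\epsilon}$ coincides with $J$ outside $N(\eta)$, while its portion inside $S^{3} - N(L_{\epsilon})$ consists of strands braided through $L_{\epsilon}$ that, in the original presentation $M = (M - N(\eta)) \cup_{T^{2}} N(\eta)$, are the trivial strands of $J$ running through the solid torus $N(\eta)$. The plan is to build a continuous ``un-satelliting'' map
$$ \phi \;:\; S^{3} - N(L_{\epsilon}) \;\longrightarrow\; N(\eta) \cong S^{1} \times D^{2} $$
such that (a) $\phi$ restricts to the identity on the common boundary torus under the gluings $\mu_{L_{\epsilon}} \sim \lambda_{\eta}^{-1}$ and $\lambda_{L_{\epsilon}} \sim \mu_{\eta}$, (b) $\phi$ carries the strands of $K_{\epsilon}$ inside $S^{3} - N(L_{\epsilon})$ onto the corresponding strands of $J$ inside $N(\eta)$, and (c) $\phi$ sends the remainder of $S^{3} - N(L_{\epsilon})$ into the complement of those strands. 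Given such $\phi$, the map $\alpha : M \to M$ defined to be the identity on $M - N(\eta)$ and $\phi$ on $S^{3} - N(L_{\epsilon})$ is continuous (the two definitions agree on $T^{2}$), satisfies $\alpha(K_{\epsilon}) = J$, and by (c) satisfies $\alpha(M - K_{\epsilon}) \subseteq M - J$, which is the definition of $K_{\epsilon}$ being $J$-characteristic.

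The main obstacle is the construction of $\phi$ with the prescribed behavior on the pattern strands. The existence of a degree-one map $S^{3} - N(L_{\epsilon}) \to S^{3} - N(U) = S^{1} \times D^{2}$ that is the identity on the boundary is classical: one collapses the knotting of $L_{\epsilon}$ to an unknotted model, extending the identity of the boundary torus after matching $(\mu_{L_{\epsilon}}, \lambda_{L_{\epsilon}})$ with $(\mu_{U}, \lambda_{U})$. The care required is to perform this collapse relative to the distinguished arcs making up the portion of $K_{\epsilon}$ inside $S^{3} - N(L_{\epsilon})$, so that these arcs map precisely onto the prescribed arcs of $J$ in $N(\eta)$; this can be arranged by choosing the collapse to be supported away from a tubular neighborhood of the pattern arcs, or equivalently by combining the standard collapse $S^{3} \to S^{3}$ taking $L_{\epsilon}$ to an unknot with an isotopy rel boundary aligning the two sets of arcs. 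Once $\phi$ is in place, verifying continuity of $\alpha$ and the point-set inclusion $\alpha(M - K_{\epsilon}) \subseteq M - J$ is immediate, and the proof is complete.
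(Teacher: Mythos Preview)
Your overall strategy matches the paper's: take $K_\epsilon = J(\eta, L_\epsilon)$, invoke Theorem~\ref{dense subset} for (ii) and (iii), and for (i) build $\alpha$ by gluing a collapse map $S^{3} - N(L_\epsilon) \to S^{1}\times D^{2}$ to the identity on the complementary piece.

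However, you have misread the satellite construction, and this has created a phantom obstacle. Since $\eta \subset E_J$ and $N(\eta)$ is a tubular neighborhood of $\eta$ in the \emph{interior} of $E_J$, the knot $J$ is disjoint from $N(\eta)$. In the decomposition $M \cong (M - N(\eta)) \cup (S^{3} - N(L_\epsilon))$ the knot $K_\epsilon$ is, by definition, the image of $J \subset M - N(\eta)$ under inclusion; it lies entirely in $M - N(\eta)$ and has \emph{no} strands in $S^{3} - N(L_\epsilon)$. (The informal description in Remark~\ref{sat const}(i) about ``grasping strands'' refers to how $K_\epsilon$ looks after transporting back to the standard model of $M$, not to the decomposed model you are using.) Consequently your conditions (b) and (c) on $\phi$ are vacuous: any continuous $\phi : S^{3} - N(L_\epsilon) \to N(\eta)$ extending the prescribed boundary identification suffices, since its image lies in $N(\eta) \subset E_J$, which is disjoint from $J$. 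The ``main obstacle'' you identify---controlling $\phi$ on pattern arcs---simply does not arise.

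The paper's argument is exactly your simplified one. It obtains $\alpha_\epsilon'' : S^{3} - N(L_\epsilon) \to S^{3} - N(U)$ from the abelianization $\pi_1(S^{3} - N(L_\epsilon)) \to \Z$ (so that $\mu_{L_\epsilon} \mapsto \mu_U$ on the boundary), extends by the identity on $E_J - N(\eta)$ to get $\alpha_\epsilon' : E_{K_\epsilon} \to E_J$, and then fills in over the solid tori $N(K_\epsilon) \to N(J)$. The only cosmetic difference from your outline is that the paper assembles the map on knot exteriors first and caps off with solid tori at the end, whereas you propose to work directly with $M - N(\eta)$; these produce the same map $\alpha$.
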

\begin{proof}
Let $\{ L_{\epsilon} \}$ be a family of knots in $S^3$ whose Levine-Tristram signatures form a dense subset of $\R$, as in the proof of Theorem \ref{dense subset}, and define $K_{\epsilon} = J(\eta, L_{\epsilon})$.  Then (ii) and (iii) follow from Theorem \ref{dense subset}.  For (i) we must construct for each $\epsilon$ a map $\alpha_{\epsilon}: M \to M$ such that $\alpha_{\epsilon} (J(\eta, L_{\epsilon})) = J$ and $\alpha_{\epsilon} (M - {J(\eta, L_{\epsilon})}) \subseteq M - J$.  Recall that 
$$E_{J(\eta, L_{\epsilon})} = \left( \EJ - N(\eta) \right) \cup -\left( S^3 - N(L_{\epsilon}) \right), $$ 
where $\mu_{L_{\epsilon}} \sim \lambda_{\eta}^{-1}$ and $\lambda_{L_{\epsilon}} \sim \mu_{\eta}$.  The abelianization homomorphism $\pi_1\left( S^3 - N(L_{\epsilon}) \right) \to \Z$ induces a map 
$$\alpha_{\epsilon}^{''}: S^3 - N(L_{\epsilon}) \to S^3 - N(U),$$
where $U$ is the unknot in $S^3$, that restricts to a homeomorphism of the boundaries with $\alpha_{\epsilon}^{''}( \mu_{L_{\epsilon}}) = \mu_U$.
We may extend $\alpha_{\epsilon}^{''}$ to a map 
$$\alpha_{\epsilon}^{'}: E_{J(\eta, L_{\epsilon})} \to \EJ = \left( \EJ - N(\eta) \right) \cup -\left( S^3 - N(U) \right)$$
by defining $\alpha_{\epsilon}^{'}$ to be the identity on $\EJ - N(\eta)$.  Finally, as $\alpha_{\epsilon}^{'}$ maps $\partial E_{J(\eta, L_{\epsilon})}$ homeomorphically to $\partial \EJ$ with $\alpha_{\epsilon}^{'}(\mu_{J(\eta, L_{\epsilon})}) = \mJ$, we may extend $\alpha_{\epsilon}^{'}$ to a map 
$$\alpha_{\epsilon}: M \to M$$
by identifying the first $M$ with $E_{J(\eta, L_{\epsilon})} \cup ST$ and the second $M$ with $\EJ \cup ST$, and where the first solid torus is attached along $\partial E_{J(\eta, L_{\epsilon})}$ so that $\mu_{J(\eta, L_{\epsilon})}$ bounds a disk and the second solid torus is attached along $\partial \EJ$ so that $\mJ$ bounds a disk.  In $M$, the core of the solid torus attached to $E_{J(\eta, L_{\epsilon})}$ is $J(\eta, L_{\epsilon})$ and the core of the solid torus attached to $\EJ$ is $J$.  Because $\alpha_{\epsilon}\left(E_{J(\eta, L_{\epsilon})}\right) = \EJ$, we can isotope $\alpha_{\epsilon}$ so that $\alpha_{\epsilon} \left( J(\eta, L_{\epsilon}) \right) = J$ and $\alpha_{\epsilon} \left( M - J(\eta, L_{\epsilon}) \right) \subseteq M - J$.  Hence, each $K_{\epsilon} = J(\eta, L_{\epsilon})$ is $J$-characteristic.
\end{proof}

\section{Construction of the $G$-localization}
\label{const of localization}

We now construct the localization $(E, p)$ from Theorem \ref{M-localization}.  The construction presented here follows Levine's construction of algebraic closure of groups \cite{alg.closureII} and may be known to some experts although, to our knowledge, has not appeared in the literature.  We note that K. Orr and J. C. Cha use the existence of this localization in a forthcoming paper \cite{orr-cha}, although they do not include a construction. \\

\begin{defn}
A {\em system of equations} over $\g_A: A \to \M$ is a finite collection of equations
$$ \left\{ x_i = w_i(x_1, \ldots, x_n) \right\}_{i=1}^n $$
where $w_i \in \text{Ker} \left\{ A \ast F(x_1, \ldots, x_n) \rightarrow \M \ast F(x_1, \ldots, x_n) \right\}$. \\

A {\em solution} to the system is a set $\{g_i\}_{i=1}^n \subset A$ such that setting $x_i = g_i$ gives a true equation in $A$.  If $\{g_i\}_{i=1}^n$ is a solution then $g_i \in \G \g_A$ and $g_iw_i^{-1}$ is in the kernel of the homomorphism
$$A \ast F(x_1, \ldots, x_n) \rightarrow A$$
that maps
$$x_i \mapsto g_i$$
for $i = 1, \ldots, n$. \\
\end{defn}

\begin{defn}
$\g_X$ is {\em algebraically closed}, written $\g_X \in AC_G$, if every system of equations over $\g_X$ has a unique solution in $X$.  An {\em algebraic closure} of $\g_A$ is a morphism $\g_A \to \g_{\hat{A}}$ in $\mathcal{G}^G$ with $\g_{\hat{A}} \in AC_G$ satisfying the universal property that if $f: \g_A \to \g_X$ with $\g_X \in AC_G$ then there is a unique morphism $\hat{f}: \g_{\hat{A}} \to \g_X$ making the following diagram commute: 
$$
\xymatrix{
A \ar[dr]_f \ar[rr] & & \hat{A} \ar[dl]^{\hat{f}} \\
& X
}
$$
\end{defn}

$\phantom{}$ \\

\begin{defn}
A {\em $\Pi$-perfect subgroup of $\g_A$} is a normal subgroup $N$ of $A$ such that
\begin{enumerate}
\item[(i)] $[N, \G \g_A] = N$ (recall, $\G \g_A = \text{Ker}\{ \g_A \}$) and 
\item[(ii)] $N$ is finitely normally generated in $A$.  
\end{enumerate} 
Note that if $N$ is $\Pi$-perfect then $N \unlhd \G \g_A$. \\
\end{defn}

\begin{thrm}
\label{AC has no Pi-perfect subgps}
If $\g_X \in AC_G$ and $N$ is a $\Pi$-perfect subgroup of $\g_X$ then $N = \{e\}$.
\end{thrm}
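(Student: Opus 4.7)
Since $N$ is finitely normally generated in $X$, choose normal generators $g_1,\ldots,g_n \in N$. My goal is to construct a system of equations over $\g_X$ that admits both $(g_1,\ldots,g_n)$ and the trivial tuple $(e,\ldots,e)$ as solutions. Uniqueness of solutions (the defining property of $AC_G$) will then force $g_i = e$ for every $i$, and since the $g_i$ normally generate $N$, this gives $N=\{e\}$.

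The first step is to exploit the hypothesis $[N,\G\g_X]=N$ to rewrite each generator. Because every element of $N$ is a product of commutators $[a,b]$ with $a\in N$ and $b\in \G\g_X$, I can write
$$ g_i \;=\; \prod_k [a_{ik},\,b_{ik}], \qquad a_{ik}\in N,\ b_{ik}\in \G\g_X. $$
Using that $N$ is the normal closure of $\{g_1,\ldots,g_n\}$ in $X$, I further expand each $a_{ik}$ as a product of conjugates
$$ a_{ik} \;=\; \prod_l c_{ikl}\, g_{j_{ikl}}^{\,\varepsilon_{ikl}}\, c_{ikl}^{-1}, \qquad c_{ikl}\in X,\ \varepsilon_{ikl}\in\{\pm 1\}. $$
Replacing each occurrence of $g_j$ by the formal variable $x_j$ produces words
$$ W_i(x_1,\ldots,x_n) \;=\; \prod_k \Bigl[\, \prod_l c_{ikl}\, x_{j_{ikl}}^{\,\varepsilon_{ikl}}\, c_{ikl}^{-1}\,,\; b_{ik}\,\Bigr] \;\in\; X\ast F(x_1,\ldots,x_n), $$
with $W_i(g_1,\ldots,g_n)=g_i$ by construction.

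The key technical step — and the only real obstacle — is verifying that $W_i$ lies in the kernel of the canonical map $X\ast F(x_1,\ldots,x_n)\to G\ast F(x_1,\ldots,x_n)$, so that $\{x_i = W_i\}_{i=1}^{n}$ is a legitimate system of equations over $\g_X$. Under this map every $c_{ikl}\mapsto \g_X(c_{ikl})\in G$, and crucially each $b_{ik}\mapsto e$ because $b_{ik}\in\G\g_X$. Each commutator factor therefore becomes $[\,\cdot\,,e]=e$ in $G\ast F(x_1,\ldots,x_n)$, and hence $W_i \mapsto e$. This is where the $\Pi$-perfect hypothesis $[N,\G\g_X]=N$ does all the work: it is precisely what lets me place the second entry of every commutator in $\G\g_X$.

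Finally, I verify both solutions. The tuple $(g_1,\ldots,g_n)$ is a solution by the construction of $W_i$. For $(e,\ldots,e)$, substituting $x_j=e$ collapses each inner product $\prod_l c_{ikl}\, x_{j_{ikl}}^{\varepsilon_{ikl}}\, c_{ikl}^{-1}$ to $e$, so every commutator factor becomes $[e,b_{ik}]=e$ and $W_i(e,\ldots,e)=e$. Since $\g_X\in AC_G$ forces the solution to be unique, $g_i=e$ for each $i$, and therefore $N=\{e\}$.
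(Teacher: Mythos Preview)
Your proof is correct and follows essentially the same approach as the paper's: choose normal generators of $N$, use $N=[N,\G\g_X]$ to express each generator as a product of commutators with second entry in $\G\g_X$, replace the generators by variables to obtain a system over $\g_X$, and invoke uniqueness of solutions against the trivial solution. The only cosmetic difference is that you expand the $N$-entries of the commutators into conjugates of the $g_j$ explicitly, whereas the paper writes $g_{j_k}$ there directly; one minor imprecision is that elements of $[N,\G\g_X]$ are a priori products of commutators \emph{and their inverses}, but this is harmless since $[a,b]^{-1}=[a,b^{-1}]^{b}$ and the argument goes through unchanged.
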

\begin{proof}
Choose a finite normal generating set $\{g_1, \ldots g_n\}$ for $N$.  Since $N = [N, \G \g_X]$ we may write
$$ g_i = w_i(g_1, \ldots g_n) := \prod_k a_k [g_{j_k}, b_k]^{\pm}a_k^{-1} $$
where $a_k \in X$, $b_k \in \G \g_X$, and the product is finite.  Replace $g_i$ with $x_i$ to get words
$$ x_i = w_i(x_1, \ldots x_n) $$
in $\text{Ker}\left\{ X\ast F(x_1, \ldots x_n) \rightarrow G \ast F(x_1, \ldots, x_n) \right\}$.  There is a unique solution to the system of equations $\{x_i = w_i\}_{i = 1}^n$ because $\g_X \in AC_G$.  Both $\{x_i = e\}_{i=1}^n$ and $\{x_i = g_i\}_{i=1}^n$ are solutions, so $g_i = e$ for all $i$. 
\end{proof}

$\phantom{}$ \\

\begin{thrm}
\label{Pi-perfect iff nonunique solutions}
There exists a nontrivial $\Pi$-perfect subgroup $N$ of $\g_A$ if and only if there exists a system of equations
$$ \{ x_i = w_i \}_{i=1}^n $$
over $\g_A$ with more than one solution.
\end{thrm}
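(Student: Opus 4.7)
The plan is to prove both directions constructively. For the forward direction, I begin with a nontrivial $\Pi$-perfect subgroup $N$ of $\g_A$ with a finite normal generating set $g_1, \ldots, g_n$. Because $N = [N, \G\g_A]$, each $g_i$ may be written as a finite product $\prod_k [n_k, b_k]^{\epsilon_k}$ with $n_k \in N$, $b_k \in \G\g_A$, $\epsilon_k = \pm 1$. Expanding each $n_k$ as a product of conjugates of the $g_j$'s by elements of $A$ and then replacing each occurrence of $g_j$ by the formal symbol $x_j$ produces words $w_i(x_1, \ldots, x_n) \in A \ast F(x_1, \ldots, x_n)$. Each commutator $[n_k(x), b_k]$ projects to $[\,\cdot\,, e] = e$ in $G \ast F(x_1,\ldots,x_n)$ because $b_k \in \G\g_A$, so $w_i \in \ker\{A \ast F \to G \ast F\}$. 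By construction $\{x_i = g_i\}$ solves the system $\{x_i = w_i\}$; substituting $x_j = e$ gives $n_k(e) = e$ and hence $w_i(e,\ldots,e) = e$, producing a second solution. The two solutions are distinct because $N \neq \{e\}$ forces some $g_i \neq e$.

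For the reverse direction, suppose $\{g_i\}$ and $\{h_i\}$ are distinct solutions of $\{x_i = w_i\}$. The change of variables $y_i = x_i h_i^{-1}$ converts the system into $\{y_i = v_i\}$, where $v_i(y) := w_i(y_1 h_1, \ldots, y_n h_n) h_i^{-1}$; one checks $v_i \in \ker\{A \ast F \to G \ast F\}$ using $h_j \in \G\g_A$. This new system has solutions $y = e$ and $y_i = g_i h_i^{-1}$, the latter nontrivial, so I may assume one solution is trivial and the other, relabeled $\{g_i\}$, is nontrivial. Let $N$ be the normal closure in $A$ of $\{g_1, \ldots, g_n\}$; it is a nontrivial, finitely normally generated, normal subgroup of $A$ contained in $\G\g_A$. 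To show $N = [N, \G\g_A]$, note that $[N, \G\g_A] \subseteq N$ is immediate and $[N, \G\g_A]$ is normal in $A$ (being the commutator of two normal subgroups of $A$), so it suffices to prove $g_i \in [N, \G\g_A]$ for each $i$.

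Working in $\bar A := A / [N, \G\g_A]$, the image $\bar N$ commutes elementwise with $\overline{\G\g_A}$. Since the kernel of $A \ast F \to G \ast F$ is the normal closure of $\G\g_A$ in $A \ast F$, I write $w_i = \prod_k c_k^{-1} r_k c_k$ with $r_k \in \G\g_A$ and $c_k \in A \ast F$. A short induction on the length of $c_k$ shows that $c_k(g) \cdot c_k(e)^{-1}$ is a product of conjugates of $g_j^{\pm 1}$ by elements of $A$, hence lies in $N$; so $\overline{c_k(g)}$ and $\overline{c_k(e)}$ differ by an element of $\bar N$. That element commutes with $\bar r_k$ in $\bar A$, forcing $\overline{c_k(g)^{-1} r_k c_k(g)} = \overline{c_k(e)^{-1} r_k c_k(e)}$. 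Multiplying over $k$ yields $\bar g_i = \overline{w_i(g)} = \overline{w_i(e)} = \bar e$, so $g_i \in [N, \G\g_A]$, as needed.

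The main obstacle is the commutator bookkeeping in the last paragraph: one must express $w_i$ as a product of conjugates of elements of $\G\g_A$ and then track, modulo $[N, \G\g_A]$, how the two substitutions $x_j = g_j$ and $x_j = e$ interact on each conjugate. Everything else is a formal manipulation of the defining properties of $\Pi$-perfect subgroups and of systems of equations.
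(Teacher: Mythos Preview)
Your proof is correct. The forward direction is essentially the paper's argument with cosmetic differences: the paper writes each $g_i$ directly as $\prod_k a_k[g_{j_k},b_k]^{\pm}a_k^{-1}$ with $a_k\in A$ and $b_k\in\G\g_A$, whereas you first write $g_i=\prod_k[n_k,b_k]^{\epsilon_k}$ and then expand the $n_k$'s as products of $A$-conjugates of the $g_j$'s. Either way one obtains a system with the two solutions $\{e\}$ and $\{g_i\}$.

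The reverse direction, however, proceeds along a genuinely different line. After the same change of variables $y_i=x_ih_i^{-1}$, the paper uses that $\tilde w_i$ lies in the kernel of $A\ast F\to A$ to write $\tilde w_i=\prod_k\beta_k y_{j_k}^{n_k}\beta_k^{-1}$ with $\beta_k\in A$, then rewrites this via commutator identities as a product of conjugates of $[\beta_k,y_{j_k}^{n_k}]$ times $\prod_k y_{j_k}^{n_k}$, and finally invokes the condition $\tilde w_i\in\ker\{A\ast F\to G\ast F\}$ to conclude that $\prod_k y_{j_k}^{n_k}=e$ and each $\beta_k\in\G\g_A$; substituting the nontrivial solution then exhibits $g_ih_i^{-1}\in[\G\g_A,N]$ explicitly. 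You instead use the other kernel description: since $\ker\{A\ast F\to G\ast F\}$ is the normal closure of $\G\g_A$, you write $w_i=\prod_k c_k^{-1}r_kc_k$ with $r_k\in\G\g_A$ and $c_k\in A\ast F$, and then argue in the quotient $\bar A=A/[N,\G\g_A]$, where $\bar N$ centralises $\overline{\G\g_A}$, that the two evaluations $w_i(g)$ and $w_i(e)$ agree. The paper's route gives an explicit commutator expression for each generator of $N$ but requires justifying that the conjugators $\beta_k$ can be taken in $\G\g_A$; your route sidesteps that point entirely and is arguably cleaner, at the cost of being less explicit about how $g_i$ sits inside $[N,\G\g_A]$.
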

\begin{proof} 
Suppose that $N$ is a nontrivial $\Pi$-perfect subgroup and let $\{g_i\}_{i=1}^n$ be a normal generating set for $N$.  As in the proof of Theorem \ref{AC has no Pi-perfect subgps}, $g_i \in [N, \G \g_A]$ and so we may write
$$g_i = w_i(g_1, \ldots g_n) := \prod_k a_k [g_{j_k}, b_k]^{\pm}a_k^{-1} $$
where $a_k \in A$, $b_k \in \G \g_A$, and the product is finite.  Replace $g_i$ with $x_i$ to get a system of equations
$$ \left\{ x_i = w_i(x_1, \ldots x_n) \right\}_{i=1}^n $$
over $\g_A$ with distinct solution sets $\{x_i = e\}_{i=1}^n$ and $\{x_i = g_i\}_{i=1}^n$. \\

For the other half of the statement, suppose that 
$$\left\{ x_i = w_i(x_1, \ldots x_n) \right\}_{i=1}^n$$
is a system of equations over $\g_A$ with distinct solution sets $\{x_i = g_i \}_{i=1}^n$ and $\{x_i = h_i\}_{i=1}^n$.  Substitute $x_i = y_i h_i$ into these equations to obtain
$$\left\{ y_i = w_i(y_1 h_1, \ldots y_n h_n) h_i^{-1} =: \tilde{w}_i(y_1, \ldots y_n) \right\}_{i=1}^n .$$
Since $h_i \in \G \g_A$,
$$\tilde{w}_i \in \text{Ker}\left\{ A\ast F(y_1, \ldots y_n) \rightarrow \M \ast F(y_1, \ldots, y_n) \right\}$$
for all $i$.  We therefore have a new system of equations over $\g_A$ with distinct solution sets $\{y_i = e\}_{i=1}^n$ and $\{y_i = g_ih_i^{-1} \}_{i=1}^n$. \\

Let $N = \langle g_1h_1^{-1}, \ldots, g_nh_n^{-1} \rangle$, so $N$ is finitely normally generated in $A$ and nontrivial.  It will be shown that $N = [N, \G \g_A]$.  
Each $\tilde{w}_i$, having $\{y_i = e\}_{i=1}^n$ as a solution set, is an element of $\text{Ker}\left\{ A\ast F(y_1, \ldots y_n) \rightarrow A \right\}$ and may therefore be written as a finite product
$$\tilde{w}_i = \prod_k \beta_k y_{j_k}^{n_k} \beta_k^{-1} $$
where $\beta_k \in A$ and $n_k \in \Z$.  We have
\begin{align}
\tilde{w_{i}}(y_1, \ldots y_n) & = \prod_k \beta_k y_{j_k}^{n_k} \beta_k^{-1} \nonumber \\
& = \prod_k [\beta_k, y_{j_k}^{n_k}] y_{j_k}^{n_k} \nonumber \\
& = \big{(} \prod_k (y_{j_1}^{n_1} \cdots y_{j_{k-1}}^{n_{k-1}}) [\beta_k, y_{j_k}^{n_k}] (y_{j_1}^{n_1} \cdots y_{j_{k-1}}^{n_{k-1}})^{-1}\big{)} \prod_k y_{j_k}^{n_k} \nonumber
\end{align}
Moreover, $\prod_k y_{j_k}^{n_k} = e$ and each $\beta_k \in \G \g_A$ because 
$$\tilde{w}_i \in \text{Ker}\left\{ A\ast F(y_1, \ldots y_n) \to \M \ast F(y_1, \ldots, y_n) \right\}.$$  
Substituting $g_i h_i^{-1}$ in for $y_i$ we see that $ g_i h_i^{-1} \in [\G \g_A, N]$.
\end{proof}

$\phantom{}$ \\

\begin{prop}
\label{I is a Pi-perfect subgroup of A}
If $N_1, \ldots, N_n$ are $\Pi$-perfect subgroup of $\g_A$ then so is 
$$N = \langle N_1 \cup \cdots \cup N_n \rangle.$$
\end{prop}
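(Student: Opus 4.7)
The plan is to verify the three conditions for $N$ to be $\Pi$-perfect directly from the hypotheses, exploiting that $N$ is defined as the subgroup generated by the union of the $N_i$.

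First, I would check normality and finite normal generation, both of which are essentially formal. Each $N_i$ is normal in $A$, so $N = \langle N_1 \cup \cdots \cup N_n \rangle$ is normal in $A$ as well, being the subgroup generated by a union of normal subgroups. For finite normal generation, choose a finite normal generating set $S_i$ for each $N_i$; then $S_1 \cup \cdots \cup S_n$ is a finite set whose normal closure in $A$ contains every $N_i$ and hence equals $N$.

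The content is in verifying condition (i), namely $[N, \G\g_A] = N$. The inclusion $[N, \G\g_A] \subseteq N$ is immediate from the normality of $N$ in $A$: for $x \in N$ and $g \in \G\g_A \subseteq A$, the element $[x, g] = x^{-1}(g^{-1} x g)$ lies in $N$ because $g^{-1} x g \in N$. For the reverse inclusion, I would use the $\Pi$-perfectness of each $N_i$: since $N_i \subseteq N$, commutator subgroup functoriality gives
$$N_i = [N_i, \G\g_A] \subseteq [N, \G\g_A].$$
Therefore every generator of $N$ lies in $[N, \G\g_A]$, and since $[N, \G\g_A]$ is a subgroup, $N \subseteq [N, \G\g_A]$.

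There is no real obstacle here; the proposition is a routine closure-under-finite-joins result, and the only mildly substantive observation is that in the equality $N_i = [N_i, \G\g_A]$ one is free to replace the first slot by any supergroup of $N_i$ (in particular by $N$) and still retain containment. This is exactly what makes the join of finitely many $\Pi$-perfect subgroups again $\Pi$-perfect, and sets up the following step in the paper (presumably building a unique maximal $\Pi$-perfect subgroup).
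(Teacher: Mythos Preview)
Your proposal is correct and follows essentially the same approach as the paper. The only cosmetic difference is that the paper verifies $N \subseteq [N, \G\g_A]$ by explicitly writing an arbitrary $h \in N$ as a product of conjugates of elements $n_k \in N_{j}$ and then expanding each $n_k$ as a product of conjugates of commutators, whereas you invoke the monotonicity $[N_i, \G\g_A] \subseteq [N, \G\g_A]$ directly; these are the same idea, and your phrasing is slightly cleaner.
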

\begin{proof}
It is clear that $[N, \G \g_A] \subset N$.  To see the other inclusion, consider an element $h \in N$.  We may write
$$ h = \prod_k a_k n_k a_k^{-1} $$
where $a_k \in A$, $n_k \in N_j$ for some $j$ dependent on $k$, and the product is finite.  As $N_j$ is $\Pi$-perfect we may write
$$ n_k = \prod_i b_{k_i} [m_{k_i}, g_{k_i}]^{\pm} b_{k_i}^{-1} $$
where $b_{k_i} \in A$, $m_{k_i} \in N_j$, $g_{k_i} \in \G \g_A$, and the product is finite.  Hence, we have
\begin{align}
h & = \prod_k a_k n_k a_k^{-1} \nonumber \\
& = \prod_k a_k \left( \prod_i b_{k_i} [m_{k_i}, g_{k_i}]^{\pm} b_{k_i}^{-1} \right) a_k^{-1} \nonumber \\
& = \prod_k \left( \prod_i a_k b_{k_i} [m_{k_i}, g_{k_i}]^{\pm} b_{k_i}^{-1} a_k^{-1} \right), \nonumber
\end{align}
and it follows that $h \in [N, \G \g_A]$.  Since $N$ is finitely normally generated in $A$ by the union of the finite normal generating sets of $N_1, \ldots, N_n$, $N$ is $\Pi$-perfect.
\end{proof}

$\phantom{}$ \\

Let $S$ be the set of all systems of equations over $\g_A$.  Construct $\tilde{A}$ by adjoining $F(x_1^{\alpha}, \ldots, x_{n_{\alpha}}^{\alpha})$ to $A$ for all systems $\alpha \in S$ and adding relations $x_i^{\alpha} = w_i^{\alpha}$.  That is, define
$$ \tilde{A} = \dfrac{ A \ast \left( \ast_{\alpha} F(x_1^{\alpha}, \ldots, x_{n_{\alpha}}^{\alpha}) \right)}{ \langle (x_i^{\alpha})^{-1} w_i^{\alpha} | \alpha \in S \rangle }.$$
Note that $\tilde{A}$ admits an epimorphism 
$$\xymatrix{ \g_{\tilde{A}}: \tilde{A} \ar[r] & G }$$ 
defined by 
\begin{equation*}
\left\{
\begin{array}{ll}
\g_{\tilde{A}}(a) = \g_A(a) & \text{for } a \in A  \\
\phantom{*} \\
\g_{\tilde{A}}(x_i^{\alpha}) = e & \text{for all } x_i^{\alpha} 
\end{array} \right.
\end{equation*}
$\phantom{M}$ \\
Let $I \lhd \tilde{A}$ be the normal subgroup generated by the union of all $\Pi$-perfect subgroups of $\tilde{A}$.  Every $\Pi$-perfect subgroup is contained in $\G \g_{\tilde{A}}$, so $I \unlhd \G \g_{\tilde{A}}$.  Define $\hat{A} = \dfrac{\tilde{A}}{I}$.  Since $I $ is contained in $\G \g_{\tilde{A}}$, $\g_{\tilde{A}}$ determines an epimorphism 
$$\g_{\hat{A}}: \hat{A} \to G .$$
It will be shown that $\g_{\hat{A}}$ is the algebraic closure of $\g_A$.  First we need some lemmas. \\

$\phantom{}$ \\

\begin{lem}
\label{AC has no Pi-perfect subgroups}
$\g_{\hat{A}}$ has no nontrivial $\Pi$-perfect subgroups.
\end{lem}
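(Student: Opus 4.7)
The plan is to show directly that every $\Pi$-perfect subgroup $\hat{N}$ of $\hat{A}$ lifts to a $\Pi$-perfect subgroup $\tilde{N}$ of $\tilde{A}$. Once this is established, the definition of $I$ as the (normal) subgroup of $\tilde{A}$ generated by all $\Pi$-perfect subgroups yields $\tilde{N} \subseteq I$, so if $\pi : \tilde{A} \to \hat{A}$ denotes the quotient map then $\hat{N} = \pi(\tilde{N}) \subseteq \pi(I) = \{e\}$.

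To construct the lift, fix a finite normal generating set $\hat{g}_1, \ldots, \hat{g}_n$ of $\hat{N}$ and choose lifts $g_i \in \tilde{A}$. The hypothesis $\hat{N} = [\hat{N}, \G\g_{\hat{A}}]$ lets us write
\[
\hat{g}_i \;=\; \prod_k \hat{a}_k \, [\hat{m}_k, \hat{b}_k]^{\pm 1} \, \hat{a}_k^{-1}
\]
in $\hat{A}$ with $\hat{a}_k \in \hat{A}$, $\hat{m}_k \in \hat{N}$, $\hat{b}_k \in \G\g_{\hat{A}}$; further, write each $\hat{m}_k$ as an explicit product of conjugates of the $\hat{g}_j^{\pm 1}$. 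Choose lifts $a_k \in \tilde{A}$ of $\hat{a}_k$ and $b_k \in \G\g_{\tilde{A}}$ of $\hat{b}_k$ (possible since $\pi$ restricts to a surjection of kernels), and let $m_k \in \tilde{A}$ be the corresponding product of conjugates of the $g_j^{\pm 1}$, so that $m_k$ lies in the normal closure of $g_1, \ldots, g_n$. Then
\[
r_i \;:=\; g_i^{-1} \prod_k a_k \, [m_k, b_k]^{\pm 1} \, a_k^{-1} \;\in\; \ker\pi \;=\; I.
\]
Each $r_i$ involves only finitely many $\Pi$-perfect subgroups in its expression, so collecting over $i = 1, \ldots, n$ we obtain a finite family of $\Pi$-perfect subgroups whose union generates, by Proposition~\ref{I is a Pi-perfect subgroup of A}, a single $\Pi$-perfect subgroup $N^*$ of $\tilde{A}$ that contains every $r_i$. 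Define
\[
\tilde{N} \;:=\; \langle\langle g_1, \ldots, g_n \rangle\rangle_{\tilde{A}} \cdot N^*,
\]
a finitely normally generated normal subgroup of $\tilde{A}$ with $\pi(\tilde{N}) = \hat{N}$.

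It remains to verify $\tilde{N} = [\tilde{N}, \G\g_{\tilde{A}}]$. Since $m_k \in \tilde{N}$ and $b_k \in \G\g_{\tilde{A}}$, each commutator $[m_k, b_k] \in [\tilde{N}, \G\g_{\tilde{A}}]$; this subgroup is normal in $\tilde{A}$ and therefore absorbs the conjugations by $a_k$. Combined with $r_i \in N^* = [N^*, \G\g_{\tilde{A}}] \subseteq [\tilde{N}, \G\g_{\tilde{A}}]$, this shows $g_i \in [\tilde{N}, \G\g_{\tilde{A}}]$, while the normal generators of $N^*$ are in $[\tilde{N}, \G\g_{\tilde{A}}]$ by $\Pi$-perfectness of $N^*$. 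Hence $\tilde{N}$ is $\Pi$-perfect, finishing the proof. The main technical obstacle is packaging the correction terms $r_i \in I$ into a single finitely normally generated $\Pi$-perfect subgroup rather than merely into the possibly infinitely generated $I$; this is exactly the role of Proposition~\ref{I is a Pi-perfect subgroup of A}, which collapses finitely many $\Pi$-perfect subgroups into one.
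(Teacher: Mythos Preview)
Your proof is correct and follows essentially the same approach as the paper: lift a normal generating set of the $\Pi$-perfect subgroup to $\tilde{A}$, absorb the resulting error terms in $I$ into a single $\Pi$-perfect subgroup via Proposition~\ref{I is a Pi-perfect subgroup of A}, and conclude that the lift is $\Pi$-perfect and hence contained in $I$. If anything, your write-up is more careful than the paper's at the final step: the paper cites Proposition~\ref{I is a Pi-perfect subgroup of A} as though it applied directly to the union of the error-term $\Pi$-perfect subgroups with $\langle\langle \tilde g_1,\dots,\tilde g_n\rangle\rangle$, whereas you correctly separate out $N^*$ and then verify by hand that $\tilde N = \langle\langle g_i\rangle\rangle \cdot N^*$ satisfies $\tilde N = [\tilde N,\Gamma\gamma_{\tilde A}]$.
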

\begin{proof}
Suppose that $N = \langle g_1, \ldots, g_n \rangle = [N, \G \g_{\hat{A}}]$ is a $\Pi$-perfect subgroup.  Write $g_i$ as a finite product
$$ g_i = \prod_k a_k[g_{j_k}, h_k]^{\pm} a_k^{-1} $$
with $a_k \in \hat{A}$ and $h_k \in \G \g_{\hat{A}}$.  Lift $a_k$, $h_k$, and each $g_j$ to $\tilde{a}_k$, $\tilde{h}_k$, and $\tilde{g}_j$ in $\tilde{A}$, respectively, to obtain an equation
$$ \tilde{g}_i = \left( \prod_k \tilde{a}_k[\tilde{g}_{j_k}, \tilde{h}_k]^{\pm} \tilde{a}_k^{-1} \right) \tilde{\eta}_i, $$
for some $\tilde{\eta}_i \in I$.  Note that, by the construction of $\hat{A}$, $\tilde{h}_k \in \G \g_{\tilde{A}}$.  We may write $\tilde{\eta}_i$ as a finite product of elements of $I$, each of which is contained in a $\Pi$-perfect subgroup.  The union of these $\Pi$-perfect subgroups together with the normal subgroup generated by $\{ \tilde{g}_1, \ldots, \tilde{g}_n \}$ is contained in a $\Pi$-perfect subgroup by Proposition \ref{I is a Pi-perfect subgroup of A}.  Hence, $g_i = e$ in $\hat{A}$.
\end{proof}

$\phantom{}$ \\

\begin{lem}
\label{hat(A) is AC}
$\g_{\hat{A}}$ is algebraically closed.
\end{lem}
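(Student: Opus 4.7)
The plan is to verify separately that any system $\Sigma = \{x_i = w_i(x_1, \ldots, x_n)\}_{i=1}^n$ over $\g_{\hat{A}}$ admits a unique solution. Uniqueness is immediate from the work already done: Lemma \ref{AC has no Pi-perfect subgroups} states that $\g_{\hat{A}}$ has no nontrivial $\Pi$-perfect subgroups, so Theorem \ref{Pi-perfect iff nonunique solutions} rules out the existence of two distinct solutions. The substance of the argument is therefore existence.

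The strategy is to reduce $\Sigma$ to a system over $\g_A$, where the construction of $\tilde{A}$ supplies a solution by fiat, and then to project back down to $\hat{A}$. First I would lift each coefficient of the $w_i$ from $\hat{A}$ to $\tilde{A}$, producing words $\tilde{w}_i \in \tilde{A} \ast F(x_1, \ldots, x_n)$ whose image in $\hat{A} \ast F(x_1, \ldots, x_n)$ is $w_i$; because $\tilde{A} \to \hat{A}$ lies over $G$, each $\tilde{w}_i$ still sits in the kernel of $\tilde{A} \ast F \to G \ast F$. Only finitely many generators of $\tilde{A}$ appear in the collection $\{\tilde{w}_i\}$, and each is a word in $A$ together with finitely many $x_j^{\beta}$ for $\beta \in S$. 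I introduce a fresh variable $y_j^{\beta}$ for every such $x_j^{\beta}$, substitute $y_j^{\beta}$ for $x_j^{\beta}$ throughout, and denote the resulting expressions $\bar{w}_i \in A \ast F(x_1, \ldots, x_n, \{y_j^{\beta}\})$. Since both $x_j^{\beta}$ and $y_j^{\beta}$ map to the identity in $G \ast F$, the $\bar{w}_i$ remain in the appropriate kernel.

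Next I assemble the enlarged system $\Sigma^{\ast}$ over $\g_A$ consisting of $\{x_i = \bar{w}_i\}_{i=1}^n$ together with the defining equations $\{y_j^{\beta} = w_j^{\beta}(y_1^{\beta}, \ldots, y_{n_{\beta}}^{\beta})\}_{j, \beta}$ for each $\beta$ whose generators were imported. Each right-hand side lies in the required kernel, so $\Sigma^{\ast}$ is itself an element of $S$. By the very construction of $\tilde{A}$, the generators $x_i^{\Sigma^{\ast}}$ and $y_j^{\beta, \Sigma^{\ast}}$ form a solution to $\Sigma^{\ast}$ in $\tilde{A}$; let $\bar{x}_i, \bar{y}_j^{\beta} \in \hat{A}$ be their images. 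For each imported $\beta$, both $(\bar{y}_j^{\beta})_j$ and the images of the original generators $x_j^{\beta}$ are solutions of $\beta$ in $\hat{A}$, so the uniqueness established in the first paragraph forces these tuples to coincide. Substituting this identification into $\bar{x}_i = \bar{w}_i(\bar{x}_k, \bar{y}_j^{\beta})$ recovers the equations $\bar{x}_i = w_i(\bar{x}_1, \ldots, \bar{x}_n)$, exhibiting $(\bar{x}_1, \ldots, \bar{x}_n)$ as the sought solution in $\hat{A}$.

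The main obstacle is the bookkeeping in the middle step: one must verify that after substituting fresh variables for the $x_j^{\beta}$ the $\bar{w}_i$ are honest elements of $A \ast F(x_1, \ldots, x_n, \{y_j^{\beta}\})$ whose image in $G \ast F$ vanishes, and one must check that re-evaluating $y_j^{\beta} \mapsto x_j^{\beta}$ inside $\tilde{A}$ genuinely recovers the original coefficients of the $\tilde{w}_i$, so that the final substitution collapses $\bar{w}_i$ back to $w_i$ in $\hat{A}$.
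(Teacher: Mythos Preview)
Your argument is correct and follows essentially the same route as the paper: lift the coefficients of the system from $\hat{A}$ to $\tilde{A}$, replace the finitely many $x_j^{\beta}$ occurring there by fresh variables, adjoin the defining equations of those systems $\beta$ to obtain a finite system over $\g_A$, and then use the construction of $\tilde{A}$ to produce a solution which descends to $\hat{A}$; uniqueness comes from Lemma~\ref{AC has no Pi-perfect subgroups} and Theorem~\ref{Pi-perfect iff nonunique solutions}. You are in fact a bit more careful than the paper at one point: you explicitly invoke uniqueness to identify the $\bar{y}_j^{\beta}$ with the images of the original $x_j^{\beta}$ in $\hat{A}$, which is exactly what is needed to conclude that the $\bar{x}_i$ solve the \emph{original} system over $\g_{\hat{A}}$ rather than merely the enlarged system over $\g_A$; the paper leaves this step implicit.
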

\begin{proof}
Let $\{ y_k = w_k(y_1, \ldots y_n) \}_{k=1}^n$ with 
$$w_k \in \text{Ker}\left\{ \hat{A} \ast F(y_1, \ldots, y_n) \to G \ast F(y_1, \ldots, y_n) \right\}$$
be a system of equations over $\g_{\hat{A}}$.  We first show that this system has a solution, and then that it is unique.  Each $w_k$ contains only a finite number of elements $\hat{a}_1, \ldots \hat{a}_{s_k} \in \hat{A}$.  Lift $\hat{a}_i$ to $\tilde{a}_i \in \tilde{A}$.  Each $\tilde{a}_i$ is a finite product of elements in $A$ and elements $\{ x_{i_j} \}$, and each $x_{i_j}$ is contained in the solution set to some system of equations $\{ x_r^{i_j} = v_r^{i_j} \}$ over $A$.  Combining all these systems of equations, we get a finite system of equations 
$$ \left\{ v_l \right\}_{l=1}^m $$
over $A$ that contains in its solution set all the elements of $\tilde{A}$ that appear in $\tilde{a}_1, \ldots, \tilde{a}_{s_k}$, for $k = 1, \ldots, n$, and are not in $A$.  We now form a new, even larger system of equations \\
\begin{equation*}
\left\{
\begin{array}{l}
y_k = w_k(y_1, \ldots, y_n, x_1, \ldots, x_m) \nonumber \\
x_l = v_l(x_1, \ldots, x_m) \nonumber
\end{array} \right.
\end{equation*} \\
with 
$$w_k, v_l \in \text{Ker}\left\{ A \ast F(y_1, \ldots, y_n, x_1, \ldots, x_m) \to G \ast F(y_1, \ldots, y_n, x_1, \ldots, x_m) \right\}.$$ 
Every system of equations over $\g_A$ has a solution in $\tilde{A}$, and therefore in $\hat{A}$.  Moreover, the solution to $\{w_k, v_l \}_{k, l}$ is unique by Theorem \ref{Pi-perfect iff nonunique solutions} since, by Lemma \ref{AC has no Pi-perfect subgroups}, $\g_{\hat{A}}$ has no nontrivial $\Pi$-perfect subgroups.
\end{proof}

$\phantom{}$ \\

\begin{lem}
\label{hat(A) is the AC}
If $f: \g_A \to \g_X$ and $\g_X$ is algebraically closed then there exists a unique morphism $\hat{f}: \g_{\hat{A}} \to \g_X$ making the following diagram commute.
$$
\xymatrix{
A \ar[rr] \ar[dr]_{f}& & \hat{A} \ar[dl]^{\hat{f}} \\
& X
}
$$
\end{lem}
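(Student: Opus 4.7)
The plan is to first build an extension $\tilde{f} \colon \tilde{A} \to X$, then show it descends through the quotient $\hat{A} = \tilde{A}/I$, and finally argue uniqueness.

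First I would define $\tilde{f}$ on $\tilde{A} = \bigl(A \ast (\ast_\alpha F(x_1^\alpha, \ldots, x_{n_\alpha}^\alpha))\bigr)/\langle (x_i^\alpha)^{-1} w_i^\alpha \rangle$ as follows. Set $\tilde{f}|_A = f$. For each system of equations $\alpha = \{x_i^\alpha = w_i^\alpha\}_{i=1}^{n_\alpha}$ over $\g_A$, apply $f$ coefficient-wise to obtain a system $\{y_i = f_\ast w_i^\alpha\}_{i=1}^{n_\alpha}$ over $\g_X$. Since $\g_X$ is algebraically closed this new system has a unique solution $\{y_i = s_i^\alpha\}_{i=1}^{n_\alpha}$ in $X$; declare $\tilde{f}(x_i^\alpha) := s_i^\alpha$. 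The defining relations of $\tilde{A}$ are exactly $x_i^\alpha = w_i^\alpha$, and by construction these go to true identities in $X$, so $\tilde{f}$ extends to a well-defined homomorphism $\tilde{A} \to X$. The formula for $\g_{\tilde{A}}$ sends each $x_i^\alpha$ to $e$, and each $s_i^\alpha \in \G\g_X$ (being a solution of a system with $f_\ast w_i^\alpha \in \ker(X \ast F \to G \ast F)$), so $\tilde{f}$ is a morphism in $\mathcal{G}^G$.

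The key step is to show $\tilde{f}(I) = \{e\}$. It suffices to show that if $N \lhd \tilde{A}$ is $\Pi$-perfect then $\tilde{f}(N) = \{e\}$. Let $N$ be normally generated by $g_1, \ldots, g_n$ and let $N' \lhd X$ be the normal closure of $\tilde{f}(\{g_1, \ldots, g_n\})$ in $X$; then $N'$ is finitely normally generated and contains $\tilde{f}(N)$. Using $N = [N, \G\g_{\tilde{A}}]$, write each generator
\[
g_i = \prod_k a_k [g_{j_k}, b_k]^{\pm} a_k^{-1}
\]
with $a_k \in \tilde{A}$ and $b_k \in \G\g_{\tilde{A}}$. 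Applying $\tilde{f}$ and using that $\tilde{f}(b_k) \in \G\g_X$ and $[N', \G\g_X]$ is normal in $X$, we see each $\tilde{f}(g_i) \in [N', \G\g_X]$, whence $N' \subseteq [N', \G\g_X]$. The reverse inclusion is automatic, so $N'$ is $\Pi$-perfect in $\g_X$. By Theorem \ref{AC has no Pi-perfect subgps} (applied to $\g_X \in AC_G$) we conclude $N' = \{e\}$, and therefore $\tilde{f}(N) = \{e\}$. Thus $\tilde{f}$ factors through a morphism $\hat{f} \colon \g_{\hat{A}} \to \g_X$, and commutativity of the required triangle on $A$ follows from $\tilde{f}|_A = f$.

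For uniqueness, suppose $\hat{f}'$ also makes the triangle commute. Then $\hat{f}'$ lifts to a homomorphism $\tilde{A} \to X$ agreeing with $f$ on $A$, so the images of $x_1^\alpha, \ldots, x_{n_\alpha}^\alpha$ constitute a solution to the pushed-forward system $\{y_i = f_\ast w_i^\alpha\}$ in $X$. Algebraic closedness of $\g_X$ forces these images to equal $s_i^\alpha$, so $\hat{f}'$ agrees with $\hat{f}$ on every generator of $\hat{A}$. The main obstacle is step two, ensuring that the image of a $\Pi$-perfect subgroup is itself $\Pi$-perfect (rather than only satisfying one of the two defining conditions); the argument above is that although $\tilde{f}(N)$ may not be normal in $X$, passing to its normal closure $N'$ preserves both the finite normal generation and the perfectness identity, which is exactly what is needed to invoke Theorem \ref{AC has no Pi-perfect subgps}.
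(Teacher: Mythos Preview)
Your proof is correct and follows essentially the same approach as the paper: construct $\tilde{f}\colon \tilde{A}\to X$ by using algebraic closedness of $\g_X$ to solve each pushed-forward system, show $\tilde{f}$ kills every $\Pi$-perfect subgroup (hence $I$), and deduce uniqueness from uniqueness of solutions in $X$. The only difference is in the middle step: the paper invokes Theorem~\ref{Pi-perfect iff nonunique solutions} to turn a $\Pi$-perfect $N$ into a system over $\g_{\tilde{A}}$ with two solutions, pushes it to $\g_X$, and uses unique solvability there to conclude $\tilde{f}(h_i)=e$; you instead pass to the normal closure $N'$ of $\tilde{f}(N)$ in $X$, verify directly that $N'$ is $\Pi$-perfect, and apply Theorem~\ref{AC has no Pi-perfect subgps}. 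These are two sides of the same coin and both are valid.
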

\begin{proof}
We begin by showing existence.  Let $f: \g_A \to \g_X$ with $\g_X$ algebraically closed and let $\{ x_i = w_i \}_{i=1}^n$ be a system of equations over $\g_A$.  $f$ extends to a homomorphism
$$\xymatrix{ f': A \ast F(x_1, \ldots, x_n) \ar[r] & X \ast F(x_1, \ldots, x_n) }$$
over $G$ (where, in both cases, $x_i \mapsto e$ in $G$), and $\{ x_i = f'(w_i) \}_{i=1}^n$ is a system of equations over $X$.  As $\g_X$ is algebraically closed, there is a solution $\{ x_i = g_i \}_{i=1}^n$ in $X$ to this system.  $f$ therefore extends to
$$\xymatrix{ A \ast F(x_1, \ldots, x_n) \ar[r] & X }$$
by sending $x_i \mapsto g_i$.  In this way we see that $f$ extends to a homomorphism
$$\xymatrix{ \tilde{f}: \tilde{A} \ar[r] & X }$$
over $G$.  To see that $I$ is contained in the kernel of $\tilde{f}$, suppose that $N$ is a $\Pi$-perfect subgroup of $\g_{\tilde{A}}$ and let $\{h_i \}_{i=1}^m$ be a normal generating set for $N$.  We showed in the proof of Theorem \ref{Pi-perfect iff nonunique solutions} that there is a system of equations $\{x_i = w_i \}_{i=1}^m$ over $\g_{\tilde{A}}$ with distinct solutions $\{x_i = e\}_{i=1}^m$ and $\{x_i = h_i\}_{i=1}^m$.  
This gives rise to a system of equations over $\g_X$, $\{x_i = \tilde{f}(w_i) \}_{i=1}^m$, with solutions $\{x_i = e\}_{i=1}^m$ and $\{x_i = \tilde{f}(h_i) \}_{i=1}^m$.  Since $\g_X$ is algebraically closed, $\tilde{f}(h_i) = e$  for all $i$, implying that $N$ is in the kernel of $\tilde{f}$.  It follows that $I$, the normal subgroup of $\tilde{A}$ generated by the union of all $\Pi$-perfect subgroups of $\g_{\tilde{A}}$, is in the kernel of $\tilde{f}$.  Hence, $\tilde{f}$ factors through a homomorphism $\hat{f}: \hat{A} \to X$ over $G$. \\

We now show that $\hat{f}$ is unique.  Suppose that $\hat{p}$ and $\hat{q}$ are two homomorphisms making the following diagram commute:
$$
\xymatrix{
A \ar[rr] \ar[dr]_f & & \hat{A} \ar[dl]_{\hat{p}}^{\hat{q}} \\
& X & 
}
$$
We know that $\hat{p}$ and $\hat{q}$ agree on $A$, so consider $x \in \hat{A}$, where $x = x_j$ in some system of equations $\{x_i = w_i \}_{i=1}^n$ over $\g_A$.  Extend $f$ to 
$$f': A \ast F(x_1, \ldots, x_n) \to X \ast F(x_1, \ldots, x_n).$$ 
Then $\{ x_i = f'(w_i)   \}_{i=1}^n$ is a system of equations over $\g_X$ with solutions $\{ x_i = \hat{p}(x_i) \}_{i=1}^n$ and $\{ x_i = \hat{q}(x_i) \}_{i=1}^n$.  As $\g_X$ is algebraically closed, $\hat{p}(x_i) = \hat{q}(x_i)$ for all $i$ and, in particular, $\hat{p}(x) = \hat{q}(x)$.
\end{proof}
$\phantom{}$ \\

\begin{thrm}
\label{AC exists}
$\g_{\hat{A}}$ is the algebraic closure of $\g_A$.
\end{thrm}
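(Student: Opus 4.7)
The plan is to observe that Theorem~\ref{AC exists} is essentially the assembly of the three lemmas preceding it, together with the easy verification that the construction actually yields a morphism in $\mathcal{G}^G$. First I would unwind the definition of algebraic closure: we must exhibit (a) a morphism $\iota_A:\g_A\to\g_{\hat A}$ in $\mathcal{G}^G$, (b) the fact that $\g_{\hat A}\in AC_G$, and (c) the universal property that any $f:\g_A\to\g_X$ with $\g_X\in AC_G$ factors uniquely through $\iota_A$.

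For (a), the map is built into the construction: $A$ sits inside $\tilde A$ as a free factor, and composing with the quotient $\tilde A\twoheadrightarrow\hat A$ gives a group homomorphism $\iota_A:A\to\hat A$. Since $\g_{\tilde A}$ was defined to restrict to $\g_A$ on $A$ and to send every adjoined $x_i^\alpha$ to $e$, and since $\g_{\hat A}$ is induced by $\g_{\tilde A}$ (which is well-defined because $I\subseteq\G\g_{\tilde A}$), the triangle $\g_A=\g_{\hat A}\circ\iota_A$ commutes, so $\iota_A$ is indeed a morphism in $\mathcal{G}^G$. For (b), I would simply invoke Lemma~\ref{hat(A) is AC}, which says exactly that every system of equations over $\g_{\hat A}$ has a solution, and combine it with Lemma~\ref{AC has no Pi-perfect subgroups} and Theorem~\ref{Pi-perfect iff nonunique solutions} to conclude that the solution is unique, so $\g_{\hat A}\in AC_G$.

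For (c), I would cite Lemma~\ref{hat(A) is the AC} directly: given $f:\g_A\to\g_X$ with $\g_X$ algebraically closed, the lemma produces a unique $\hat f:\g_{\hat A}\to\g_X$ in $\mathcal{G}^G$ such that $\hat f\circ\iota_A=f$. Putting (a)--(c) together completes the proof.

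There is no real obstacle here since the substantive work has already been done in the earlier lemmas; the only minor bookkeeping is to make sure the constructed map $\iota_A:A\to\hat A$ is a morphism in $\mathcal{G}^G$ (rather than just a group homomorphism), which reduces to checking that $\g_{\tilde A}$ kills every adjoined generator and that $I$ is contained in $\ker\g_{\tilde A}$, both of which are immediate from the construction. The hardest conceptual step was Lemma~\ref{hat(A) is AC}, where one had to enlarge an arbitrary system of equations over $\g_{\hat A}$ by adjoining the systems that define the coefficients in $\hat A\setminus A$ in order to reduce to a single system over $\g_A$; but that step is already established, so Theorem~\ref{AC exists} follows by a short assembly argument.
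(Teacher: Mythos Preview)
Your proposal is correct and follows essentially the same approach as the paper: the paper's proof is the two-line statement ``By Lemma~\ref{hat(A) is AC} $\g_{\hat{A}}$ is algebraically closed, and by Lemma~\ref{hat(A) is the AC} $\g_{\hat{A}}$ is initial.'' One small redundancy: Lemma~\ref{hat(A) is AC} already asserts that $\g_{\hat A}$ is algebraically closed (existence \emph{and} uniqueness of solutions---its proof invokes Lemma~\ref{AC has no Pi-perfect subgroups} and Theorem~\ref{Pi-perfect iff nonunique solutions} internally), so your separate appeal to those results in part~(b) is unnecessary, though harmless.
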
 
\begin{proof} 
By Lemma \ref{hat(A) is AC} $\g_{\hat{A}}$ is algebraically closed, and by Lemma \ref{hat(A) is the AC} $\g_{\hat{A}}$ is initial.
\end{proof} 
$\phantom{}$ \\

Recall from Definition \ref{Omega} that $\Omega^{\M}$ is the class of morphisms in $\mathcal{G}^{\M}$ satisfying the following properties:
\begin{enumerate}
\item $f: \g_A \rightarrow \g_B$ is a morphism with $A$ finitely generated and $B$  finitely presented,
\item $\G \g_{A}$ and $\G \g_{B}$ are finitely normally generated in $A$ and $B$, respectively,
\item $f$ induces a normal surjection $\G \g_{A} \rightarrow \G \g_{B}$, and 
\item $f_{i}: H_{i}(A; \Z[G]) \rightarrow H_{i}(B; \Z[G])$ is an isomorphism for $i=1$ and an epimorphism for $i = 2$. \\
\end{enumerate}

Recall also Definition \ref{local object}:  An object $\g_X \in \mathcal{G}^{\M}$ is {\em $\Omega^{\M}$-local} if for any $f: \g_A \to \g_B$ in $\Omega^{\M}$ and for any morphism $\g_A \to \g_X$, there exists a unique morphism $g: \g_B \to \g_X$ making the following diagram commute:
$$
\xymatrix{
A \ar[d] \ar[r]^f & B \ar[dl]^g \\
X 
}
$$ 
$\phantom{}$ \\

\begin{lem}
\label{kernels are finitely normally generated}
If $f: A \rightarrow B$ is an epimorphism of groups with $A$ finitely generated and $B$ finitely related then $B$ is finitely presented and $\text{Ker} \{f\}$ is finitely normally generated in $A$.
\end{lem}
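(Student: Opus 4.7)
The plan is to handle the two conclusions separately, first obtaining finite presentability of $B$ and then using it to extract a finite normal generating set for $\ker f$.

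\emph{First, $B$ is finitely presented.} Write $B = \langle Y \mid R \rangle$ with $R = \{r_1, \ldots, r_m\}$ finite, and let $Y_0 \subseteq Y$ be the finite subset of generators actually appearing in some $r_i$. Since every relator lies in the free group $F(Y_0)$, one has the free-product decomposition $B \cong \langle Y_0 \mid R \rangle * F(Y \setminus Y_0)$. Because $A$ is finitely generated and $f$ is surjective, $B$ is finitely generated; the retraction $B \twoheadrightarrow F(Y \setminus Y_0)$ sending $Y_0 \mapsto 1$ and fixing $Y \setminus Y_0$ is well-defined (the relators die, since they involve only $Y_0$) and surjective, so $F(Y \setminus Y_0)$ is a finitely generated free group. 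Hence $Y \setminus Y_0$ is finite, $Y$ is finite, and $\langle Y \mid R \rangle$ is already a finite presentation of $B$.

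\emph{Second, $\ker f$ is finitely normally generated.} Write $A = \langle a_1, \ldots, a_n \rangle$ and fix a finite presentation $B = \langle b_1, \ldots, b_k \mid s_1, \ldots, s_m \rangle$, with canonical surjections $\pi_A : F(x_1, \ldots, x_n) \twoheadrightarrow A$ and $\pi_B : F(y_1, \ldots, y_k) \twoheadrightarrow B$. For each $j$, pick any lift $\hat b_j \in F(x_1, \ldots, x_n)$ of some preimage in $A$ of $b_j$, and let $\phi : F(y_1, \ldots, y_k) \to F(x_1, \ldots, x_n)$ be the homomorphism $y_j \mapsto \hat b_j$, which exists by freeness. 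By construction $f \circ \pi_A \circ \phi = \pi_B$. Also pick, for each $i$, a word $u_i \in F(y_1, \ldots, y_k)$ with $\pi_B(u_i) = f(a_i)$. I claim that the finite set
$$S := \{\pi_A\phi(s_j)\}_{j=1}^m \;\cup\; \{a_i \cdot \pi_A\phi(u_i)^{-1}\}_{i=1}^n$$
normally generates $\ker f$ in $A$. Containment of $S$ in $\ker f$ is immediate from the definitions. Conversely, for any $a = \pi_A(w) \in \ker f$, the ``compatibility'' relations $a_i \equiv \pi_A\phi(u_i)$ modulo the normal closure of $S$ let one rewrite $a$ as $\pi_A\phi(w')$, where $w' \in F(y_1, \ldots, y_k)$ is obtained from $w$ by substituting $u_i$ for each $x_i$; since $\pi_B(w') = f(a) = e$, the word $w'$ lies in the normal closure of $\{s_j\}$ in $F(y_1, \ldots, y_k)$, and applying $\pi_A\phi$ places $a$ in the normal closure of $\{\pi_A\phi(s_j)\}$, hence of $S$.

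The main obstacle is the substitution bookkeeping in the second paragraph: one must verify carefully that the replacement $a_i \leftrightarrow \pi_A\phi(u_i)$ propagates correctly through an arbitrary word $w$, and that the resulting word $w'$ really does vanish under $\pi_B$. The first paragraph is a clean application of the free-product decomposition forced by $R$ living in a finite sub-alphabet, and the compatibility of $\phi$ with the various surjections in the second paragraph is automatic from freeness of $F(y_1, \ldots, y_k)$.
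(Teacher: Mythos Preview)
Your proof is correct. Both parts hold up: the free-product decomposition $B \cong \langle Y_0 \mid R\rangle * F(Y\setminus Y_0)$ is valid since normal closure commutes with free products in the sense needed, and your retraction argument cleanly forces $Y\setminus Y_0$ finite. In the second part the substitution bookkeeping is fine once one phrases it via the homomorphism $\psi: F(x)\to F(y)$, $x_i\mapsto u_i$, which satisfies $\pi_B\circ\psi = f\circ\pi_A$; then $w' = \psi(w)$ lies in $\ker\pi_B = \langle\langle s_j\rangle\rangle$, and everything follows.

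The paper's route is different in organization. Rather than treating the two conclusions separately, it produces a single finite presentation of $B$ whose generators are \emph{already} indexed by the generators of $A$: starting from $\langle \beta_i,\, i\in I \mid r_1,\ldots,r_m\rangle$, it adjoins new generators $\alpha_1,\ldots,\alpha_n$ mapping to $f(a_1),\ldots,f(a_n)$ together with the relations $\alpha_i^{-1}v_i$, and then eliminates every $\beta_i$ by rewriting it in the $\alpha$'s (a Tietze move, legal because $f$ is onto). The resulting presentation $\langle \alpha_1,\ldots,\alpha_n \mid r'_1,\ldots,r'_m,\, v'_1,\ldots,v'_n\rangle$ is finite and, since its generators line up with those of $F_A$, its relators are literally a normal generating set for $\ker(f\circ p_A)$; projecting to $A$ gives the conclusion. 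So the paper gets both statements from one Tietze computation, while you decouple them: a structural free-product argument for finite presentability, followed by an independent construction of the normal generating set. Your first step is arguably cleaner conceptually; the paper's approach has the virtue of making the normal generators of $\ker f$ drop out automatically from the same presentation, with no separate ``compatibility relation'' bookkeeping.
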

\begin{proof} 
Let $p_A: F_A \to A$ be an epimorphism from a finitely generated free group onto $A$ and let $a_1, \ldots, a_n$ generate $F_A$.  Let $\langle \beta_i, i \in I ~|~ r_1, \ldots r_m \rangle$ be a presentation for $B$, with $I$ a possibly infinite index set.  If we write $f \circ p_A(a_1), \ldots, f \circ p_A(a_n)$ as words $v_1, \ldots, v_n$, respectively, in the $\beta_i$ then another presentation for $B$ is
$$\langle \alpha_1, \ldots, \alpha_n, \beta_i, i \in I ~|~ r_1, \ldots r_m, ~\alpha_1^{-1}v_1, \ldots, \alpha_n^{-1}v_n \rangle .$$
Since $f \circ p_A$ is an epimorphism we may write each $\beta_i$ as a word in $\alpha_1, \ldots, \alpha_n$ and replace each occurrence of $\beta_i$ in $r_1, \ldots r_m, \alpha_1^{-1}v_1, \ldots, \alpha_n^{-1}v_n$ with this word, giving a finite presentation 
$$\langle \alpha_1, \ldots, \alpha_n ~|~ r'_1, \ldots, r'_m, v'_1, \ldots, v'_n \rangle$$
for $B$.  Lift the $r'_j$ and $v'_i$ to words $s_j$ and $w_i$ in $F_A$, respectively.  Then the kernel of $f \circ p_A$ is normally generated by $s_1, \ldots, s_m, w_1, \ldots, w_n$.  Since the following diagram commutes,
$$
\xymatrix{
F_A \ar[rr]^{f \circ p_A} \ar[rd]_{p_A} & & B \\
& A \ar[ru]_{f}
}
$$
the kernel of $f$ is contained in the image under $p_A$ of the kernel of $f \circ p_A$.  That is,
$$\text{Ker}\{f\} \subset p_A( \text{Ker}\{f \circ p_A\} ).$$
It follows that the kernel of $f$ is finitely normally generated in $A$.
\end{proof}
$\phantom{}$ \\

\begin{thrm}
\label{AC = local}
$\g_X$ is algebraically closed if and only if it is $\Omega^{\M}$-local.
\end{thrm}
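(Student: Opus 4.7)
I will prove both directions by constructing an explicit dictionary between solving a system of equations over $\gamma_X$ and the extension problem defining $\Omega^G$-locality. In each direction, the key move is to realize a system $\{x_i = w_i\}_{i=1}^n$ as the data needed to build a morphism of $\Omega^G$ whose source is a free group.

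\textbf{($\Omega^G$-local $\Rightarrow$ algebraically closed).}  Suppose $\gamma_X$ is $\Omega^G$-local and let $\{x_i = w_i(x_1,\ldots,x_n)\}_{i=1}^n$ be a system over $\gamma_X$, involving finitely many elements $a_1,\ldots,a_r \in X$.  Enlarge $\{a_j\}$ by finitely many elements of $X$ whose $\gamma_X$-images generate $G$, and let $A = F(y_1,\ldots,y_s)$ be the free group on formal symbols in bijection with this enlarged set, with $\gamma_A$ induced by $\gamma_X$ (surjective by construction).  Set $B = A \ast F(x_1,\ldots,x_n)/\langle x_i^{-1}\tilde w_i\rangle_{i=1}^n$, where $\tilde w_i$ is $w_i$ with each $a_j$ replaced by the corresponding $y_j$, and define $\gamma_B$ by extending $\gamma_A$ and sending each $x_i$ to $e$ (well-defined because $\tilde w_i\in\mathrm{Ker}\{A\ast F(x)\to G\ast F(x)\}$).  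The inclusion $f:\gamma_A\to\gamma_B$ lies in $\Omega^G$: (1) is clear; (2) holds by Lemma~\ref{kernels are finitely normally generated}; (3) follows because $\mathrm{Ker}\{A\ast F(x)\to G\ast F(x)\}$ equals the normal closure of $\Gamma\gamma_A$ in $A\ast F(x)$, so each $x_i=\tilde w_i$ is a product of conjugates of $f(\Gamma\gamma_A)$ in $B$; and (4) is a Fox-calculus/relative cellular chain computation on the presentation, using that the boundary matrix in $\mathbb{Z}[G]$ has the form $-I + (\text{entries with trivial augmentation})$.  The tautological morphism $g:A\to X$, $y_j\mapsto a_j$ (or chosen supplementary element), then extends uniquely through $f$ to $\hat g:B\to X$, and $\{\hat g(x_i)\}$ is the unique solution to the original system.

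\textbf{(Algebraically closed $\Rightarrow$ $\Omega^G$-local).}  Conversely, let $\gamma_X\in AC_G$, let $f:\gamma_A\to\gamma_B$ lie in $\Omega^G$, and let $g:\gamma_A\to\gamma_X$.  Using that $B$ is finitely presented, $\Gamma\gamma_B$ is finitely normally generated, and $\Gamma\gamma_B$ is the normal closure of $f(\Gamma\gamma_A)$, I will exhibit a presentation
$$B \;\cong\; A \ast F(x_1,\ldots,x_n) / \langle x_i^{-1} w_i(a,x)\rangle_{i=1}^n$$
with each $w_i \in \mathrm{Ker}\{A\ast F(x)\to G\ast F(x)\}$ and with the same number of new generators as new relators.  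Applying $g$ to the relators yields a system $\{x_i = g_\ast(w_i)\}_{i=1}^n$ over $\gamma_X$, which admits a unique solution $\{\xi_i\}\subset X$ by algebraic closure.  Define $\hat g:B\to X$ by $\hat g|_A = g$ and $\hat g(x_i) = \xi_i$; this is forced on generators, well-defined on the relators, and independent of the presentation chosen because any two balanced presentations are related by Tietze moves whose effect on the induced system of equations preserves its (unique) solution set.

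\textbf{Main obstacle.}  The heart of the argument is the balanced-presentation claim in the second direction: that one can refine an arbitrary finite presentation of $B$ over $A$ into one with exactly $n$ new generators $x_i$ and $n$ new relators $x_i^{-1}w_i$, with $w_i$ in $\mathrm{Ker}\{A\ast F(x)\to G\ast F(x)\}$.  Conditions (1)--(3) furnish a finite presentation with some set of new generators in $\Gamma\gamma_B$ and with relators in that kernel; condition (4), the $H_1$-isomorphism and $H_2$-surjection with $\mathbb{Z}[G]$-coefficients, is exactly what is needed to trade relators against generators via handle slides in the relative 2-complex so as to produce a square Fox-derivative matrix $\partial:\bigoplus_i \mathbb{Z}[G]\to\bigoplus_i\mathbb{Z}[G]$ of the form $-I+M$ (with $M$ having augmentation-ideal entries), at which point each relator may be written as $x_i^{-1}w_i$.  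The remaining bookkeeping---that the morphism constructed in the first direction satisfies (1)--(4), and that $\hat g$ in the second direction is independent of presentation---is routine once this cellular/Fox-calculus reduction is in place.
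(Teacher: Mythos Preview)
Your first direction is essentially the paper's argument: build a free group $A=F(y_1,\dots,y_s)$ mapping to $X$, form $B=A\ast F(x)/\langle x_i^{-1}\tilde w_i\rangle$, check that the inclusion lies in $\Omega^G$, and use locality plus the pushout to solve the system uniquely.

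The second direction, however, has a real gap. You assert that conditions (1)--(4) allow you to present $B$ over $A$ in \emph{balanced} form
\[
B \;\cong\; A\ast F(x_1,\dots,x_n)\big/\langle x_i^{-1}w_i\rangle_{i=1}^n,
\]
with the Fox matrix reducible to $-I+M$ by handle slides. This is not what (4) gives you. Tietze moves preserve the deficiency, so you cannot ``trade relators against generators'' to force the matrix to be square; and even once square, reducing an invertible matrix over $\Z[G]$ to the shape $-I+M$ is a $K_1$-type statement that you have not justified. More fundamentally, the $H_2$ hypothesis is only a \emph{surjection} on group homology, and the relative $2$-complex need not be aspherical, so $\ker(\partial_2)$ in the cellular chain complex is not controlled by $H_2(B,A;\Z[G])$ alone. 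In short, nothing in $\Omega^G$ forces $B$ to be isomorphic to your balanced object.

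The paper sidesteps exactly this issue. It constructs the same balanced group $P=A\ast F(x)/\langle x_i^{-1}w_i\rangle$ that you want, but treats the natural map $q:P\to B$ only as a \emph{surjection}, not an isomorphism. Algebraic closure of $\gamma_X$ produces $p:P\to X$, and the remaining work is to show $\ker q\subset\ker p$. Here is where condition (4) is actually used: applying the five-term exact sequence to $1\to N\to\Gamma\gamma_P\to\Gamma\gamma_B\to 1$ (with $N=\ker q$) and the $H_1$-iso/$H_2$-epi hypothesis gives $N=[N,\Gamma\gamma_P]$, i.e.\ $N$ is $\Pi$-perfect. Since algebraically closed objects have no nontrivial $\Pi$-perfect subgroups (Theorem~\ref{AC has no Pi-perfect subgps} and the argument of Theorem~\ref{Pi-perfect iff nonunique solutions}), $p(N)=\{e\}$ and $p$ descends to $B$. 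Your proof would be repaired by replacing the balanced-presentation claim with this $\Pi$-perfect-kernel argument.
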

\begin{proof}
Suppose that $\g_X$ is $\Omega^{\M}$-local and let $\{ x_i = w_i \}_{i=1}^n$ be a system of equations over $\g_X$.  Let $a_1, \ldots, a_s \in X$, where $a_1, \ldots, a_r$ (for $r \leq s$) are the finitely-many elements that appear in these words and $a_{r+1}, \ldots, a_s$ are chosen such that $\{ \g_X(a_1), \ldots, \g_X(a_s) \}$ generates $\M$.  Let $f: F(y_1, \ldots, y_s) \to X$ by $f(y_i) = a_i$ and let $\tilde{w}_i$ be a lift of $w_i$ to an equation over $F(y_1, \ldots, y_s)$.  Then 
$$\g_X \circ f: F(y_1, \ldots, y_s) \to \M $$ 
is an epimorphism, and we can extend it to 
$$\dfrac{F(y_1, \ldots, y_s) \ast F(x_1, \ldots, x_n)}{\langle x_i \tilde{w}_i^{-1} | 1 \leq i \leq n \rangle} \to \M$$ 
by sending each $x_i$ to the identity in $\M$.
The following is clearly a pushout diagram in the category of groups:
$$
\xymatrix{
F(y_1, \ldots, y_s) \ar[r]^(.3){\iota} \ar[d]^f & \dfrac{F(y_1, \ldots, y_s) \ast F(x_1, \ldots, x_n)}{\langle x_i \tilde{w}_i^{-1} | 1 \leq i \leq n \rangle} = Q \ar[d] \\
X \ar[r] & \dfrac{X \ast F(x_1, \ldots, x_n)}{\langle x_i w_i^{-1} | 1 \leq i \leq n \rangle} = P.
}
$$ 
A simple Mayer-Vietoris sequence argument together with Lemma \ref{kernels are finitely normally generated} show that $\iota \in \Omega^{\M}$.
Since $\g_X$ is $\Omega^{\M}$-local, there exists a unique homomorphism $Q \to X$ making the top triangle commute.  This extends uniquely to a homomorphism $P \to X$ by the pushout property of $P$, implying that $\g_X \in AC_G$. \\

Suppose now that $X$ is algebraically closed.  Let $f: \g_A \to \g_B$ be in $\Omega^{\M}$ and let $\g_A \to \g_X$ be a morphism in $\mathcal{G}^{\M}$.  We have the following commutative diagram
$$
\xymatrix{
A \ar[d] \ar[r]^f \ar[ddrr]^{\g_A} & B \ar[ddr]^{\g_B} \\
X \ar[rrd]^{\g_X} \\
& & G
}
$$
and we want to show that there is a unique map $\hat{f}: B \to X$ making this commute.  Let $b_1, \ldots, b_n$ generate $B$.  As $\g_A$ and $\g_B$ are surjective, there are elements $a_1, \ldots a_n \in A$ such that $b_i f(a_i^{-1}) \in \G \g_B$ (recall that $\G \g_B = \text{Ker}\{\g_B\}$).  If $\tilde{b}_i = b_i f(a_i^{-1})$ then $B$ is generated by $f(A) \cup \{ \tilde{b}_i \}_{i=1}^n$ because $b_i = \tilde{b}_i f(a_i)$.  Recall that $f \in \Omega^G$, so $\G \g_B$ is normally generated in $B$ by $f(\G \g_A)$.  We may therefore express each $\tilde{b}_i$ as a finite product of the form
$$ \tilde{b}_i = \prod_k v_k f(h_k) v_k^{-1} $$
where $v_k$ is a word in $f(A) \cup \{ \tilde{b}_i \}_{i=1}^n$ and $h_k \in \G \g_A$.  Replacing each $\tilde{b}_j$ with $x_j$ in this product, we obtain an equation in $f(A) \ast F(x_1, \ldots, x_n)$.  Lifting each $v_k$ to $\bar{v}_k$ in $A\ast F(x_1, \ldots, x_n)$, we obtain an equation 
$$x_i = w_i(x_1, \ldots, x_n) := \prod_k \bar{v}_k h_k \bar{v}_k^{-1}$$
with $w_i \in \text{Ker}\left\{ A \ast F(x_1, \ldots, x_n) \to G \ast F(x_1, \ldots, x_n) \right\}$.  Consider the following diagram
$$
\xymatrix{
A \ar[d] \ar[r]^(.3){\tilde{f}} \ar@/^3pc/[rr]^f & \dfrac{A \ast F(x_1, \ldots, x_n)}{\langle x_i^{-1}w_i | i=1, \ldots, n \rangle} \ar[r]^(.75)q \ar@{..>}[dl]^p & B \phantom{a}, \\
X \\
}
$$
where $q(x_i) = \tilde{b}_i$, making $q$ an epimorphism, and $\tilde{f}$ is inclusion.  There is a unique solution $\{ x_i = g_i \}_{i=1}^n$ in $X$ to the image of $\{x_i = w_i \}_{i=1}^n$ in $X \ast F(x_1, \ldots, x_n)$ because $\g_X \in AC_G$.  Define $p: \dfrac{A \ast F(x_1, \ldots, x_n)}{\langle x_i^{-1}w_i | i=1, \ldots, n \rangle} \to X$ by $p(x_i) = g_i$.  To show that $p$ induces a homomorphism $\hat{f}:B \to X$ it is enough to show that $\text{Ker}\{q\} \subset \text{Ker}\{p\}$.  Since $f, \tilde{f} \in \Omega^G$, $q \in \Omega^G$ as well.  If $N = \text{Ker}\{q\}$ then $N$ is contained in $\G \g_P$, where $P = \dfrac{A \ast F(x_1, \ldots, x_n)}{\langle x_i^{-1}w_i | i=1, \ldots, n \rangle}$.  As $q$ is an epimorphism, it restricts to an epimorphism
$$q: \G \g_P \to \G \g_B,$$
and a simple diagram chase shows that
$$\xymatrix{ 0 \ar[r] & N \ar[r] & \G \g_P \ar[r]^q & \G \g_B \ar[r] & 0 }$$
is a short exact sequence.  Examining the associated five-term exact sequence
$$\xymatrix{ H_2( \G \g_P) \ar[r]^{\text{onto}} & H_2(\G \g_B) \ar[r] & H_1(N) \otimes_{\G \g_B} \Z \ar[r] & H_1( \G \g_P) \ar[r]^{\cong} & H_1(\G \g_B) }$$
we see that $\dfrac{N}{[N, \G \g_P]} = H_1(N) \otimes_{\G \g_B} \Z = 0$.  In particular, $N$ is $\Pi$-perfect.  This shows that $p$ extends to a homomorphism $\hat{f}:B \to X$.  As in the proof of Lemma \ref{hat(A) is the AC}, $\hat{f}$ must be unique for otherwise we could find a system of equations over $\g_X$ with distinct solutions, contradiction the algebraic closure of $\g_X$.
\end{proof}

$\phantom{}$ \\

Notice that if $f: \g_A \to \g_B$ then $f$ induces a homomorphism $\hat{f}: \g_{\hat{A}} \to \g_{\hat{B}}$.  For, $f$ induces a homomorphism $\tilde{f}: \tilde{A} \to \tilde{B}$ and, as $\g_{\tilde{B}}$ is algebraically closed, the composition
$$\xymatrix{ \tilde{A} \ar[r]^{\tilde{f}} & \tilde{B} \ar[r] & \hat{B} }$$
contains $I$ in its kernel.  Hence, we obtain $\hat{f}: \g_{\hat{A}} \to \g_{\hat{B}}$.

\begin{defn}
Define a functor $E: \mathcal{G}^{\M} \to \mathcal{G}^{\M}$ by $E(\g_A) = \g_{\hat{A}}$.  There is a natural transformation $p: id_{\mathcal{G}^{\M}} \to E$ by defining $p(\g_A): \g_A \to \g_{\hat{A}}$ to be the canonical homomorphism $A \to {\hat{A}}$.  The proof of Theorem \ref{M-localization} now follows from Theorem \ref{alg clos localized omega} below.
\end{defn}

$\phantom{}$ \\

\begin{thrm}
\label{alg clos localized omega}
If $f: \g_A \to \g_B$ is in $\Omega^G$ then $f$ induces an isomorphism $\hat{f}: \g_{\hat{A}} \to \g_{\hat{B}}$.
\end{thrm}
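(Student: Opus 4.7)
The plan is to construct an explicit two-sided inverse for $\hat{f}$ by invoking the universal properties made available by Theorem \ref{AC = local}. That theorem tells us that $\g_{\hat{A}}$ and $\g_{\hat{B}}$ are both $\Omega^G$-local. I denote by $p_A: \g_A \to \g_{\hat{A}}$ and $p_B: \g_B \to \g_{\hat{B}}$ the canonical maps to the algebraic closures. By functoriality (established in the remark preceding the theorem), $\hat{f} \circ p_A = p_B \circ f$.

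To build an inverse, first apply the $\Omega^G$-locality of $\g_{\hat{A}}$ to the morphism $p_A: \g_A \to \g_{\hat{A}}$ and to $f \in \Omega^G$: there is a unique morphism $g: \g_B \to \g_{\hat{A}}$ with $g \circ f = p_A$. Then apply the universal property of algebraic closure (equivalently, $\Omega^G$-locality again) to lift $g$ through $p_B$, obtaining a unique $\hat{g}: \g_{\hat{B}} \to \g_{\hat{A}}$ with $\hat{g} \circ p_B = g$. This $\hat{g}$ is the candidate inverse.

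To verify $\hat{g} \circ \hat{f} = \mathrm{id}_{\hat{A}}$, I would compute
$$\hat{g} \circ \hat{f} \circ p_A \;=\; \hat{g} \circ p_B \circ f \;=\; g \circ f \;=\; p_A,$$
and then invoke the uniqueness clause in the universal property of $p_A: \g_A \to \g_{\hat{A}}$ to conclude that $\hat{g} \circ \hat{f}$ must equal the identity (which trivially satisfies the same factorization). For $\hat{f} \circ \hat{g} = \mathrm{id}_{\hat{B}}$, the mildly more delicate step is to first establish $\hat{f} \circ g = p_B$; this follows because both morphisms $\g_B \to \g_{\hat{B}}$ satisfy
$$\hat{f} \circ g \circ f \;=\; \hat{f} \circ p_A \;=\; p_B \circ f,$$
so by the $\Omega^G$-locality of $\g_{\hat{B}}$ and the uniqueness of an extension through $f \in \Omega^G$, they agree. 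Then $\hat{f} \circ \hat{g} \circ p_B = \hat{f} \circ g = p_B$, and another application of uniqueness yields $\hat{f} \circ \hat{g} = \mathrm{id}_{\hat{B}}$.

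The proof is essentially a diagram chase, so there is no substantive obstacle; the only thing that requires care is keeping track of which universal property (of $\g_{\hat{A}}$ vs.\ of $\g_{\hat{B}}$, and in which direction) is being invoked at each step. Everything else follows formally from $\Omega^G$-locality of the targets.
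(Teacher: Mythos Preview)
Your proof is correct and follows essentially the same approach as the paper's: construct $g:\g_B\to\g_{\hat A}$ from the $\Omega^G$-locality of $\g_{\hat A}$, lift it to $\hat g:\g_{\hat B}\to\g_{\hat A}$ via the universal property of $\g_{\hat B}$, and verify both compositions are identities using the uniqueness clauses. The paper establishes $\hat f\circ g = p_B$ slightly earlier in the argument (phrasing it as ``$p_B$ is the unique map making the square commute''), while you defer this to the second half, but the content is identical.
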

\begin{proof}
Consider the following commutative diagram
$$
\xymatrix{
A \ar[r]^f \ar[d]_{p_A} & B \ar[d]^{p_B} \\
\hat{A} \ar[r]^{\hat{f}} & \hat{B} \phantom{a},
}
$$
where $p_A = p(\g_A)$ and $p_B = p(\g_B)$.  By Theorem \ref{AC = local} there is a unique homomorphism $g: \g_B \to \g_{\hat{A}}$ such that
$$
\xymatrix{
A \ar[r]^f \ar[d]_{p_A} & B \ar[dl]^g \\
\hat{A} \ar[d]^{\hat{f}} \\
\hat{B}
}
$$ 
commutes.  Since $\hat{B}$ is algebraically closed and $f \in \Omega^G$, $p_B: \g_B \to \g_{\hat{B}}$ is the unique homomorphism making
$$
\xymatrix{
A \ar[r]^f \ar[d]_{p_A} & B \ar[d]^{p_B} \ar[dl]_g \\
\hat{A} \ar[r]^{\hat{f}} & \hat{B}
}
$$
commute.  $\g_{\hat{B}}$ is the algebraic closure of $\g_B$, so $g$ factors through a unique homomorphism $\hat{g}: \g_{\hat{B}} \to \g_{\hat{A}}$, as in the following commutative diagram.
$$
\xymatrix{
A \ar[r]^f \ar[d]_{p_A} & B \ar[d]^{p_B} \ar[dl]_g \\
\hat{A} \ar[r]^{\hat{f}} & \hat{B} \ar@/^/[l]^{\hat{g}}
}
$$
Since 
$$(\hat{g} \circ \hat{f}) \circ p_A = \hat{g} \circ \hat{f} \circ g \circ f = \hat{g} \circ p_B \circ f = g \circ f = p_A, $$ 
and since $id_{\hat{A}}: \hat{A} \to \hat{A}$ is the unique homomorphism such that $id_{\hat{A}} \circ p_A = p_A$, it follows that $\hat{g} \circ \hat{f} = id_{\hat{A}}$.  This shows that $\hat{f}$ is injective.  On the other hand, 
$$(\hat{f} \circ \hat{g}) \circ p_B = \hat{f} \circ g = p_B$$
and, as with $id_{\hat{A}}$, we see that $\hat{f} \circ \hat{g} = id_{\hat{B}}$.  This shows that $\hat{f}$ is surjective.
\end{proof}

\bibliography{BigBibliography}
\bibliographystyle{plain}
\end{document}